\numberwithin{equation}{section}
\renewcommand{\u}{\mathbf{u}}
\newcommand{{\LPC}}{\textbf{LPC}}
\renewcommand{\L}{\mathcal{L}}
\newcommand{\Reals}{\mathbb{R}}
\newcommand{\ball}{\operatorname{\mathbb{B}}}
\newcommand{\eps}{\varepsilon}
\newcommand{\bX}{\operatorname{\bar{X}}}
\newcommand{\bp}{\operatorname{\bar{p}}}
\newcommand{\Om}{\Omega}
\newcommand{\clv}{\mathcal{V}}
\newcommand{\cla}{\mathcal{A}}
\newcommand{\clb}{\mathcal{B}}
\newcommand{\clf}{\mathcal{F}}
\newcommand{\cle}{\mathcal{E}}
\newcommand{\clm}{\mathcal{M}}
\newcommand{\clg}{\mathcal{G}}
\newcommand{\clp}{\mathcal{P}}
\newcommand{\PP}{\mathbb{P}}
\newcommand{\QQ}{\mathbb{Q}}
\newcommand{\RR}{\mathbb{R}}
\newcommand{\EE}{\mathbb{E}}
\newcommand{\NN}{\mathbb{N}}
\newcommand{\lan}{\langle}
\newcommand{\ran}{\rangle}
\newcommand{\bfB}{\mathbf{B}}
\newcommand{\sys}{\mathfrak{S}}
\newcommand{\om}{\omega}
\newtheorem{thm}{Theorem}[section]
\newtheorem{lemma}[thm]{Lemma}
\newtheorem{rem}[thm]{Remark}
\newtheorem{proposition}[thm]{Proposition}
\newtheorem{lem}[thm]{Lemma}
\begin{document}
\title{Large Deviations from the Hydrodynamic Limit for a System with Nearest Neighbor Interactions}
\author{Sayan Banerjee, Amarjit Budhiraja\thanks{%
Research supported in part by the National Science
Foundation (DMS-1305120) and the Army Research Office (W911NF-14-1-0331) }, and Michael Perlmutter}
\date{}

\maketitle
\begin{abstract}
We give a new proof of the large deviation principle from the hydrodynamic limit for the Ginzberg-Landau model studied in  \cite{donvar5} using techniques from the theory of stochastic control and weak convergence methods. The proof is based on characterizing subsequential hydrodynamic limits of controlled diffusions with nearest neighbor interaction that arise from a variational representation of certain Laplace functionals. The approach taken here does not  require superexponential probability estimates, estimation of exponential moments, or an analysis of eigenvalue problems, that are central ingredients in previous proofs. Instead, proof techniques are very similar to those used for the law of large number analysis, namely in the proof of convergence to the hydrodynamic limit (cf. \cite{guopapvar}). Specifically, the key step in the proof is establishing suitable bounds on relative entropies and Dirichlet forms associated with certain controlled laws. This general approach has the promise to be applicable to other interacting particle systems as well and to the case of non-equilibrium starting configurations, and to infinite volume systems.

\noindent {\em Keywords:} Large deviations, Interacting particle systems, Ginzberg-Landau model, Hydrodynamic limits, Variational representations, Laplace principle, Stochastic control,  Weak convergence method.\\ 

\noindent {\em 2010 Mathematics Subject Classification:} Primary 60F10, 60K35; secondary 60B05, 82C22, 93E20.
\end{abstract}

\section{Introduction and Notation} 
We consider the Ginzberg-Landau model in finite volume, namely the following system of interacting diffusions in $\Reals^N$:
\begin{align}\label{align:uncontroleq}
dX^{N}_i(t)&= dZ^{N}_i(t) - dZ^{N}_{i+1}(t),\nonumber\\
dZ^{N}_i(t)&= \frac{N^2}{2}\left[\phi'\left(X^{N}_{i-1}(t)\right) - \phi'\left(X^{N}_{i}(t)\right) \right]dt + NdB_i(t)
\end{align}
on some finite time horizon $0\leq t\leq T$ for $1 \leq i \leq N$.  The random variable $X_i^N(t)$ is thought of as the amount of charge at the site $i/N$ on the periodic lattice $\{1/N,\ldots,(N-1)/N,1\},$ and we identify $Z^N_{N+1}$ and $X^N_{N+1}$ with $Z^N_1$ and $X^N_1$ respectively. 
Here $\{B_i(t)\}_{i=1}^\infty$ are independent standard one-dimensional Brownian motions given on some probability space
$(\clv, \clf, \PP)$ and $\phi:\mathbb{R}\rightarrow\mathbb{R}$ is a twice continuously differentiable function such that 
\begin{align}
\int_{\mathbb{R}}\exp(-\phi(x))dx &= 1, \nonumber\\
\label{genfun} M(\lambda) \doteq \int_\mathbb{R} \exp(\lambda x - \phi(x)) &< \infty \quad \text{for all }\lambda\in\mathbb{R},
\end{align}
and 
\begin{align}\label{sigmaass}
\int_{\mathbb{R}} \exp(\sigma|\phi'(x)|-\phi(x))dx &<\infty \quad \text{for all }\sigma>0 .
\end{align}
The process $X^N = (X^N_i)_{i=1}^N$ is a $\RR^N$-valued Markov process with  generator given by
\begin{equation}\label{eq:defln}
\mathcal{L}^N\doteq 
\frac{N^2}{2} \sum_{i=1}^N V_i^2 - \frac{N^2}{2}\sum_{i=1}^N [\phi'(x_{i})-\phi'(x_{i+1})]V_i,
\end{equation}
where $V_i = \partial_{i}-\partial_{i+1}$ and $\partial_{i}$ denotes the partial derivative with respect to $x_i$. Let $\Phi$ be the probability measure on $\mathbb{R}$ defined by $\Phi(dx) \doteq e^{-\phi(x)}dx$, 
and let $\Phi^N$ be the measure on $\mathbb{R}^N$ defined by $\Phi^N(dx) \doteq \Phi(dx_1)\Phi(dx_2)\ldots\Phi(dx_n).$  One may check via integration by parts that $\mathcal{L}^N$ is a symmetric operator on $L^2(\mathbb{R}^N,\Phi)$ and that therefore, $\Phi^N$ defines an invariant measure for the  diffusion $X^N$. Throughout this work, $X^N$ will be the stationary process obtained by taking $X^N(0)$ distributed according to  $\Phi^N$.

Associated with the collection $(X^{N}_i(t))_{i=1}^N$ for $t\ge 0$, consider  the signed measure on the circle $S$ (namely  the interval $[0,1]$ with its end points identified), defined by 
\begin{equation}
\label{empmeas}\mu^N(t,d\theta) \doteq \frac{1}{N}\sum_{i=1}^N X_i^{N}(t) \delta_{i/N}(d\theta).
\end{equation}
The goal of this work is to establish a large deviation principle for the stochastic process $\{\mu^N(t)\}$ that takes values in the space $\mathcal M_S$ of signed measures   on $S$.  \\

Hydrodynamic limits for the sequence of  signed measure valued stochastic processes given by \eqref{empmeas} were first investigated in the seminal work of \cite{guopapvar} using techniques based on estimates on relative entropies and Dirichlet forms (governing the rate of change of relative entropies). A subsequent paper \cite{donvar5} laid the mathematical foundations of the large deviation theory for such interacting particle systems. The methods developed in \cite{donvar5}  for the large deviation analysis have been used and extended in a 
variety of interacting particle system settings such as the nongradient Ginzberg-Landau model \cite{quastel}, the Ginzberg-Landau $\nabla \phi$-interface model \cite{funaki}, the infinite volume versions of the Ginzberg-Landau model and the zero range processes \cite{landimyau} \cite{benois}, the weakly asymmetric simple exclusion process \cite{kipnisollavar}, the symmetric exclusion process in dimension at least three \cite{QRV}  and interacting spin systems \cite{pra}, to name a few. The analysis in all these works proceeds via a precise control of moments for exponential martingales. The key ingredient  is a superexponential estimate (see, for example, Theorem 2.2 of \cite{landimyau}) that is used to replace the correlation fields appearing in the exponential martingales by suitable functions of the density field. 
One of the challenges in obtaining
such superexponential probability estimates is that they exploit reversibility in a crucial way and applicability of methods based on such estimates is somewhat limited for non-equilibrium initial distributions or non-reversible generators. Furthermore, in the infinite volume case, if the starting distribution does not have finite entropy with respect to the stationary measure, it becomes a considerably challenging problem to derive superexponential estimates by the standard method of comparison with the equilibrium model and solving an eigenvalue problem for the latter model. In  general, superexponential probability estimates are the most technical parts of the  large deviation proofs for such systems.
We note however that such estimates have been established for some infinite volume and non-equilibrium settings (see \cite{landimyau}, \cite{benois}) using the so-called ``one-block" and ``two-block" estimates. 
In other works, large deviation problems for some weakly asymmetric models have been addressed via model-specific computations \cite{enaud,mathews}.

In this work we give a new proof of the large deviation principle originally obtained in \cite{donvar5}.  The proof technique is very different from all the references listed above. The central ingredient in our approach are certain stochastic control representations and weak convergence techniques.  The latter are very similar to those developed for the proof of the law of large numbers in \cite{guopapvar} (see for example the proofs of Lemma \ref{mubartight} and Theorem \ref{HL}). These techniques allow us to prove tightness of certain controlled processes and to characterize the weak subsequential limits. In contrast to the proof in \cite{donvar5} (see e.g. Lemmas 2.1, 2.2, 2.3, 2.7 and Theorem 2.5 therein), no superexponential probability estimates or exponential moment bounds are invoked in this method. 
In particular, analysis of eigenvalue problems of the form in previous works (cf. \cite[Lemma 2.2]{donvar5}) is not needed in this approach.
The starting point of  our proof is the Bryc-Varadhan equivalence between  the Laplace principle and the large deviations principle for random variables taking values in a Polish space (see the discussion leading upto \eqref{LP}). Using a stochastic control representation for exponential functionals of Brownian motions (\cite{boudup}, see also \cite{budfanwu} and Lemma \ref{lem:repnmod} in this work), this equivalence reduces the problem of large deviations to the study of asymptotics of costs associated with certain controlled stochastic processes. Characterization of the limits of the controlled processes and the costs relies on a qualitative understanding of properties such as existence, uniqueness, and continuity (in the control) of solutions of certain controlled analogues of the hydrodynamic limit PDE associated with the system (see Lemmas \ref{existence} and \ref{continu}). We note that hydrodynamic limits of certain `mildly perturbed systems'
are studied in \cite{donvar5}, however the form of the perturbations and the role they play in the analysis is somewhat different. In particular, the perturbations analyzed in \cite{donvar5}  are non-random and they appear only in the proof of the lower bound. In contrast, the controlled systems studied in the current work correspond to random perturbations and are central ingredients in the proofs of both the upper and the lower bound.

The general framework of the proof suggests that for any given system of interacting particles the large deviation analysis hinges on a good understanding of the associated hydrodynamic limit theory (i.e. law of large number behavior). In particular, for the current setting, the weak convergence arguments that allow the characterization of costs and controlled processes in the stochastic control representations rely on  similar estimates, on relative entropies and Dirichlet forms associated with probability densities of the controlled processes, that form the basis of the hydrodynamic limit proof
in \cite{guopapvar} for the uncontrolled system. Obtaining these estimates, which are relatively straightforward for the uncontrolled system, is the most demanding part of the proof. One key technical step  in getting these estimates (Lemma \ref{reglemma}) is establishing suitable regularity of the  density of the controlled process. Although many steps in the proof of Lemma  \ref{reglemma} are classical in PDE literature, we have provided a full proof for keeping the presentation self-contained. 
This lemma is crucial in the proof of Lemma \ref{Ilemma} which relies on an application of It\^{o}'s formula.
%
%
Despite the demanding proofs of Lemmas \ref{reglemma} and \ref{Ilemma}, the general method of proof of the large deviation principle seems
 quite robust and many ingredients in the proof carryover to nonreversible systems and  to the infinite volume case as well. Moreover, the method is applicable for a broad class of initial distributions (which could be very different from the equilibrium measure). The proof framework developed in the current work will be a starting point for the study of these more general settings and will be taken up in future work.
\\

\noindent {\em Notation.}  We will use the following notation. For a Polish space $\cle$, $\clp(\cle)$ will denote the space of probability measures on $\cle$ which will be equipped with the topology of weak convergence; $C(\cle)$ will denote the space of real valued continuous functions on $\cle$; and $C([0,T]:\cle)$ will denote the space of continuous functions from
$[0,T]$ to $\cle$ equipped with the topology of uniform convergence. A collection of $\clp(\cle)$-valued random variables will be called tight if their probability distributions form a relatively compact collection in $\clp(\clp(\cle))$.
We will denote by $L^2([0,T]:\RR^N)$ and $L^2([0,T]\times S)$ the Hilbert spaces of square integrable functions from
$[0,T]$ (resp. $[0,T]\times S$) to $\RR^N$ (resp. $\RR$). 
Given two probability measures $\gamma, \theta$ on some measurable space, the relative entropy of  $\gamma$ with respect to  $\theta$  will be denoted as $R(\gamma \|\theta)$. For any subset $A$ in the sigma-algebra of a measurable space $\mathcal{R}$, $\mathbb{I}_A: \mathcal{R} \rightarrow \mathbb{R}$ will denote the indicator function which takes value one on $A$ and zero on the complement of $A$. We will denote by $\kappa, \kappa_1, \kappa_2, \ldots$ generic finite constants that appear in the course of a proof. The values of these constants may change from one proof to the next.

In order to give a precise statement of the result we begin by discussing the topology on the space $\clm_S$ and on the space of $\clm_S$-valued continuous paths.

\subsection{Topology on the Space of Signed Measures}
\label{sec:topsign}
The space $\clm_S$ equipped with the topology of weak convergence is not metrizable and therefore this topology is not convenient to work with. Instead we proceed as in \cite{guopapvar}. Consider the spaces $\{\clm_S^l\}_{l\in \NN}$, where
$\clm_S^l$ is the space of signed measures on $S$ with total variation bounded by $l$, namely $\clm_S^l$ consists of
$\gamma \in \clm_S$ such that
\begin{equation}
	\|\gamma\|_{TV} \doteq \sup_{f \in B_1(S)} \langle \gamma , f \rangle \le l,
\end{equation}
where $B_1(S)$ is the space of real functions on $S$ with $\|f\|_{\infty}\doteq \sup_{\theta \in S} |f(\theta)| \le 1$
and for a signed measure $\gamma$ and a bounded real function $f$ on $S$, 
$\langle \gamma , f \rangle \doteq \int_S f(\theta) \gamma(d\theta)$. Note that
$\clm_S = \cup_{l\in \NN} \clm_S^l$. The space $\clm_S^l$ equipped with the topology of weak convergence is a Polish space
and one convenient metric (see Lemma \ref{lem:topprops}) on this space is the {\em bounded-Lipschitz distance} defined as
\begin{equation*}
d_{BL}(\gamma_1,\gamma_2) \doteq \sup_{f \in BL_{1}(S)} |\langle \gamma_1-\gamma_2,f\rangle|, \; \gamma_1, \gamma_2 \in 
\clm_S^l,
\end{equation*}
where $BL_{1}(S)$ is the space of Lipschitz functions on $S$ with 
$\|f\|_{BL} \doteq \max\{\|f\|_\infty,\|f\|_L\} \leq 1$, and
 \begin{equation*}
\|f\|_L \doteq \sup_{\theta_1, \theta_2 \in S, \theta_1\neq \theta_2} \left|\frac{f(\theta_1)-f(\theta_2)}{d(\theta_1, \theta_2)}\right|,
\end{equation*}
where $d(\theta_1, \theta_2)$ is the length of the arc $[\theta_1, \theta_2]$ of the circle $S$ viewed as the interval [0,1] with its endpoints identified. Let $\Om^l \doteq C([0,T]: \clm_S^l)$ be the space of $\clm_S^l$-valued continuous paths. This is a Polish space with distance $d_*$ given as 
\begin{equation}\label{suplip}
d_*(\mu_1,\mu_2) \doteq \sup_{0\leq t \leq T} d_{BL}(\mu_1(t,\cdot),\mu_2(t,\cdot)),\;  \mu_1, \mu_2 \in \Om^l .
\end{equation}
Let $\Om = \cup_{l \in \NN} \Om^l$. 
Let $C([0,T]:\clm_S)$ denote the space of all paths in $\clm_S$ that are continuous in the topology of weak convergence.
It is easy to check that for any $\mu\in C([0,T]:\clm_S)$ and any continuous function $f$  on $S$
$
\sup_{0\leq t \leq T} \int_S f(\theta) \mu(t,d\theta) < \infty.
$
Therefore, by the uniform boundedness principle (see for example \cite{rud})
\begin{equation*}
\sup_{0\leq t \leq T} \|\mu(t,\cdot)\|_{TV} = \sup_{0\leq t \leq T} \sup_{f \in B_1(S)}  \int_S f(\theta) \mu(t,d\theta) < \infty,
\end{equation*}
and thus  $\Omega = C([0,T]:\clm_S)$. 
In particular,  for any $\mu\in \Omega$ such that for every $t\in [0,T]$, $\mu(t, \cdot)$ has a density $m(t,\theta)$ (namely,  $\mu(t,d\theta) = m(t,\theta)d\theta$), 
\begin{align}
\label{TVbound}\sup_{0\leq t \leq T} \int_S |m(t,\theta)|d\theta<\infty.
\end{align}
The space $\Om$ will be equipped with the {\em direct limit topology}, namely a set $G\subset \Om$ is open if and only if for every $l \in \NN$, $G^{l}\doteq G \cap \Om^l$ is open in $\Om^l$. Similarly,
$\clm_S = \cup_{l\in \NN} \clm_S^l$ is equipped with the corresponding direct limit topology.

The stochastic process $\{\mu^N(t)\}$ introduced in \eqref{empmeas} has sample paths in $\Om$, i.e. $\{\mu^N\}$ is a sequence
of $\Om$-valued random variables. The goal of this work is to establish a large deviation principle for
$\{\mu_N\}$ on $\Om$ (equipped with the direct limit topology). We record below a few useful facts about the topology used here. Proofs are given in Section \ref{sec:pflemtop}.
For $x \in \Om$ and a set $A \in \Om$, let $d_*(x,A)\doteq \inf\{d_{*}(x,y): y \in A\}$.
\begin{lemma}
	\label{lem:topprops}
	The following hold.
	\begin{enumerate}[(a)]
		\item For each $l\in \NN$, the weak convergence topology on $\mathcal{M}^l_S$  is equivalent to the topology 
		induced by the bounded Lipschitz metric.
		\item Let $\mu_n, \mu \in \Om$ and suppose that $\mu_n\to \mu$. Then the following hold.
		\begin{enumerate}[(i)]
			\item For every $f \in C(S)$, $\sup_{0\le t \le T}|\lan \mu_n(t), f\ran  - \lan \mu(t), f\ran| \to 0$ as $n \to \infty$.
			\item For some $l<\infty$, $\mu_n, \mu \in \Om^l$ for all $n\in \NN$.
			\item $d_*(\mu_n, \mu)\to 0$ as $n\to \infty$. 
		\end{enumerate}
		\item Let $F$ be a closed set in $\Om$. Let for $l \in (0,\infty)$ and $x \in \Om$, $h(x) \doteq d_*(x,F^l)$.
		Then $h$ is a continuous function on $\Om$.
	\end{enumerate}
\end{lemma}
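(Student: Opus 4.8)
The plan is to establish the three items essentially independently, and within (b) to prove them in the order (ii), (iii), (i).

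For (a), I would identify $\clm_S^l$ with the closed ball of radius $l$ in $C(S)^*$; since $S$ is compact metric, $C(S)^*$ is precisely the space of finite signed Borel measures on $S$ with the total variation norm, so the weak convergence topology on $\clm_S^l$ is the relative weak-$*$ topology. By Banach--Alaoglu, $(\clm_S^l,\text{weak-}*)$ is compact, and it and $(\clm_S^l,d_{BL})$ are Hausdorff, so it suffices to check that the identity map from the former onto the latter is continuous (a continuous bijection from a compact space onto a Hausdorff space being a homeomorphism). For this I would use that $BL_1(S)$ is relatively compact in $C(S)$ by Arzel\`{a}--Ascoli, hence totally bounded: covering $BL_1(S)$ by finitely many $\|\cdot\|_\infty$-balls around $f_1,\dots,f_m\in C(S)$, a weak-$*$ neighbourhood of a fixed $\gamma_0$ cut out by controlling $\lan\gamma-\gamma_0,f_j\ran$ for $j\le m$ then forces $d_{BL}(\gamma,\gamma_0)$ small, via $|\lan\gamma-\gamma_0,f-f_j\ran|\le 2l\,\|f-f_j\|_\infty$. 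A by-product I will reuse in (b) is that $d_{BL}$-convergence in $\clm_S^l$ implies convergence tested against every $f\in C(S)$ (approximate $f$ uniformly by Lipschitz functions and use $\|\cdot\|_{TV}\le l$).

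For (b), I would prove (ii) first. The set $K\doteq\{\mu_n:n\in\NN\}\cup\{\mu\}$ is compact in $\Om$. The structural point is that the filtration $\Om^1\subset\Om^2\subset\cdots$ consists of closed embeddings: $\gamma\mapsto\|\gamma\|_{TV}=\sup\{\lan\gamma,f\ran:f\in C(S),\ \|f\|_\infty\le1\}$ is a supremum of weak-$*$ continuous maps, hence weak-$*$ lower semicontinuous, so $\clm_S^l$ is closed in $\clm_S^{l+1}$; hence $\Om^l=C([0,T]:\clm_S^l)$ is closed in $\Om^{l+1}$ (a $d_*$-limit of $\clm_S^l$-valued paths stays $\clm_S^l$-valued, since $d_{BL}(\mu_j(t),\mu(t))\le d_*(\mu_j,\mu)$), and therefore each $\Om^m$ is closed in $\Om$ and the direct limit topology restricts on $\Om^m$ to its own metric topology. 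Granting this, were $K$ contained in no single $\Om^l$ one could extract $\mu_{n_k}\in\Om^{l_k}\setminus\Om^{l_k-1}$ with $l_k\uparrow\infty$; then $A\doteq\{\mu_{n_k}:k\in\NN\}$ would meet each (metric, hence $T_1$) $\Om^m$ in a finite, hence closed, set, so every subset of $A$ would be closed in $\Om$, making $A$ a closed --- hence compact --- subset of $K$ carrying the discrete topology while being infinite, which is impossible. This gives (ii). Then (iii) is immediate, since $\mu_n\to\mu$ in $\Om$ with all terms in $\Om^l$, whose subspace topology in $\Om$ is its own, forces $d_*(\mu_n,\mu)\to0$. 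Finally (i) follows from (iii): for Lipschitz $f$ rescale into $BL_1(S)$ to get $\sup_t|\lan\mu_n(t)-\mu(t),f\ran|\le\|f\|_{BL}\,d_*(\mu_n,\mu)\to0$; for general $f\in C(S)$ approximate $f$ uniformly by Lipschitz $g$ and bound $\sup_t|\lan\mu_n(t)-\mu(t),f-g\ran|\le 2l\,\|f-g\|_\infty$ using the common bound $l$ from (ii).

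For (c), by the definition of the direct limit topology it suffices to show each restriction $h|_{\Om^k}$ is continuous on $(\Om^k,d_*)$. Since every $x\in\Om=C([0,T]:\clm_S)$ has $\sup_t\|x(t)\|_{TV}<\infty$ (the uniform boundedness argument preceding \eqref{TVbound}), $d_*(x,y)=\sup_{0\le t\le T}d_{BL}(x(t),y(t))$ is finite for all $x,y\in\Om$, so $d_*$ is a genuine metric on $\Om$ and $x\mapsto d_*(x,F^l)=\inf_{y\in F^l}d_*(x,y)$ is well defined and, by the triangle inequality, $1$-Lipschitz in $x$ for $d_*$; in particular $h|_{\Om^k}$ is $1$-Lipschitz and hence continuous (if $F^l=\emptyset$ the claim is vacuous under the convention $h\equiv+\infty$). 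I expect the only real obstacle to be (b)(ii): correctly exploiting the closed-embedding structure of $\{\Om^l\}$ to prevent a convergent sequence from having total variations escaping to infinity, and to identify the restricted direct limit topology with the intrinsic topology on each $\Om^l$; parts (a) and (c) are then routine.
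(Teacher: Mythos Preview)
Your proof is correct, but the route through (b) differs from the paper's. The paper argues in the order (i)$\Rightarrow$(ii)$\Rightarrow$(iii): for (i) it shows directly that, for each $f\in C(S)$ and $\eps>0$, the set $\{\tilde\mu\in\Om:\sup_t|\lan\tilde\mu(t)-\mu(t),f\ran|\ge\eps\}$ is closed in $\Om$ (by checking closedness in each $\Om^l$); (ii) then falls out of (i) via the uniform boundedness principle, and (iii) follows from (ii) by noting that $d_*$-balls around $\mu$ trace to open sets on each $\Om^{l'}$. Your order (ii)$\Rightarrow$(iii)$\Rightarrow$(i) replaces the UBP step by a purely topological argument: using that each $\Om^l$ is closed in $\Om^{l+1}$, any subset of $\Om$ meeting every $\Om^m$ in a finite set is closed, so an unbounded-in-TV convergent sequence would produce an infinite compact discrete subspace --- a contradiction. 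This argument is more general (it uses only the closed-embedding structure of the direct system, not the linear structure of $\clm_S$), while the paper's route is shorter once one is willing to invoke UBP. For (a) the paper takes a different tack as well, adjoining an external point $P$ to $S$ and ``balancing'' the Jordan parts of each $\mu_n$ into nonnegative measures of fixed total mass $l$ on $S\cup\{P\}$, thereby reducing to the standard equivalence of weak and $d_{BL}$ convergence for probability-type measures; your Banach--Alaoglu plus Arzel\`a--Ascoli argument is an equally valid, more functional-analytic alternative. Part (c) is essentially the same in both treatments.
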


\subsection{Rate Function}
We now introduce the rate function associated with the collection $\{\mu_N\}$. The form of this rate function is different from that given in \cite{donvar5} (see Remark \ref{DVrate}).
Let  for $\lambda \in \RR$, $\rho(\lambda) \doteq \log M(\lambda),$ and let $h(x) \doteq \sup_{\lambda\in\mathbb{R}}\{\lambda x - \rho(\lambda)\}$ be the Legendre transform of $\rho$.
Let $\tilde\Omega$ denote the collection of all $\mu$ in $\Omega$ such that for all $0\leq t\leq T,$ $\mu(t,d\theta)$ has a density $m(t,\theta)$  (namely $\mu(t,d\theta)=m(t,\theta)d\theta$) that is weakly differentiable in $\theta$ and 
satisfies
\begin{align}
\label{hl1}\int_{[0,T] \times S} h(m(t,\theta))dtd\theta <\infty,\\
\label{hl2}\int_{[0,T] \times S} [h'(m(t,\theta))]_\theta^2dtd\theta  <\infty.
\end{align}

Let  $\pi\in\mathcal{P}(\mathbb{R}\times S)$ be such that $\pi$ can be disintegrated as $\pi(dx\; d\theta) = \pi_1(dx\mid\theta) d\theta$, 
$\int_{\RR}|x| \pi_1(dx|\theta) <\infty$ for a.e. $\theta$, and with $m_0(\theta) = \int_{\RR}x \pi_1(dx|\theta)$, $\int_S h(m_0(\theta)) d\theta <\infty$. We denote the collection  of all such $\pi$ as $\mathcal{P}_*(\mathbb{R}\times S)$.
For $(u,\pi) \in L^2([0,T]\times S: \mathbb{R})\times \mathcal{P}_*(\mathbb{R}\times S)$ we define
 $\mathcal{M}_\infty(u,\pi)$ to be the collection of all  $\mu\in\tilde\Omega$, $\mu(t, d\theta) = m(t,\theta) d\theta$,
such that 
$m$ solves 
\begin{align}
\label{ratepde}\partial_t m(t,\theta) &= \frac{1}{2}\left[h'(m(t,\theta))\right]_{\theta\theta} - \partial_\theta u(t,\theta), \; m(0,\theta) = m_0(\theta)
\end{align}
where   the equation is interpreted in the weak sense, namely  for any smooth function $J$ on $S$ and any $t \in [0,T]$,
\begin{align}
&\int_{S} J(\theta) {m}(t,\theta)d\theta - \int_{S} J(\theta) {m}(0,\theta)d\theta\nonumber\\
&\quad \left.= \frac{1}{2}\int_0^t\int_{S}J''(\theta)h'({m}(s,\theta)) d\theta ds + \int_0^t\int_{S}J'(\theta)u(s,\theta) d\theta ds.\right.
 \label{eq:weakpde}
\end{align}
Letting
\begin{equation}\label{pizerodef}
\pi_0(dx\; d\theta) = \Phi(dx)d\theta,
\end{equation}
define $I: \Om \to [0,\infty]$ by
\begin{equation}\label{eq:defnratefn}
I(\mu) = \inf_{\{(u,\pi): \mu\in \mathcal{M}_\infty(u,\pi)\}} \left[\frac{1}{2} \int_0^T\int_S |u(s,\theta)|^2 d\theta ds + R(\pi\|\pi_0)\right]
\end{equation}
for $\mu\in\tilde\Omega,$ where the infimum is over $(u,\pi) \in L^2([0,T]\times S: \mathbb{R})\times \mathcal{P}_*(\mathbb{R}\times S)$, and set $I(\mu)=\infty$ for $\mu\in\Omega\setminus\tilde\Omega.$
By convention, infimum over an empty set is taken to be $\infty$.
\subsection{Statement of the Main Result}
The following is the main result of this work. Proof is given in Section \ref{sec:pfmainth}.

\begin{thm}\label{MainResult}
$I$ is a rate function on $\Om$, namely for every $M<\infty$  $\{\mu \in \Om: I(\mu) \le M\}$ is compact, and   $\{\mu^N\}$ satisfies a large deviations principle on $\Om$ with rate function $I$.
\end{thm}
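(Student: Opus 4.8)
The plan is to establish the Laplace principle, which by the Bryc--Varadhan theorem is equivalent to the large deviation principle asserted in the theorem, and to verify separately that $I$ has compact level sets. Both parts rest on the stochastic control representation of Lemma~\ref{lem:repnmod}, the tightness and characterization results Lemmas~\ref{mubartight}, \ref{reglemma}, \ref{Ilemma} and Theorem~\ref{HL}, and the well-posedness and stability in the control of the controlled equation \eqref{ratepde} from Lemmas~\ref{existence}--\ref{continu}. I would first record a standard exponential tightness estimate: since the process is stationary with $\mu^N(t)$ governed by the product measure $\Phi^N$, the total variation $\|\mu^N(t)\|_{TV}=\tfrac1N\sum_i|X^N_i(t)|$ is an average of i.i.d.\ variables with finite exponential moments by \eqref{genfun}, whence $\limsup_N\tfrac1N\log\PP[\mu^N\notin\Om^l]\to-\infty$ as $l\to\infty$. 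Together with Bryc--Varadhan applied on the Polish spaces $\Om^l$ (see Lemma~\ref{lem:topprops}), this reduces the proof to showing that for every bounded continuous $F$ on $\Om$,
\[
\lim_{N\to\infty}-\frac1N\log\Expect{e^{-NF(\mu^N)}}=\inf_{\mu\in\Om}\{F(\mu)+I(\mu)\}.
\]
For each fixed $N$, Lemma~\ref{lem:repnmod} rewrites the left-hand side as the infimum, over controls $v=(v_i)_{i=1}^N$ perturbing the driving Brownian motions and over initial laws $\nu^N\in\clp(\RR^N)$, of $\Expect{\tfrac12\sum_i\int_0^T|v_i(s)|^2\,ds+\tfrac1N R(\nu^N\|\Phi^N)+F(\bar\mu^N)}$, where $\bar\mu^N$ is the empirical measure \eqref{empmeas} of the controlled diffusion $\bar X^N$ started from $\nu^N$.

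For the Laplace upper bound, given $\eps>0$ I would choose $\mu^\ast\in\tilde\Omega$ with $F(\mu^\ast)+I(\mu^\ast)\le\inf_\mu\{F(\mu)+I(\mu)\}+\eps$ and then $(u,\pi)$ with $\mu^\ast\in\clm_\infty(u,\pi)$ and $\tfrac12\int_0^T\!\!\int_S|u|^2\,d\theta\,ds+R(\pi\|\pi_0)\le I(\mu^\ast)+\eps$. Taking $v^N$ a suitable lattice approximation of $u$ and $\nu^N$ a product-type initial law built from $\pi_1(\cdot\mid\cdot)$ (after mollification if needed), arranged so that $\tfrac1N R(\nu^N\|\Phi^N)\to R(\pi\|\pi_0)$ and $\tfrac12\sum_i\int_0^T|v^N_i|^2\,ds\to\tfrac12\int_0^T\!\!\int_S|u|^2$, Lemmas~\ref{existence} and \ref{continu} and the controlled hydrodynamic limit of Theorem~\ref{HL} yield $\bar\mu^N\Rightarrow\mu^\ast$ in $\Om$; boundedness and continuity of $F$ then give $\limsup_N(-\tfrac1N\log\Expect{e^{-NF(\mu^N)}})\le F(\mu^\ast)+I(\mu^\ast)+\eps$, and $\eps\downarrow0$ finishes this half.

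The Laplace lower bound is the core of the argument and contains the main obstacle. Along a subsequence attaining the $\liminf$, pick $\eps$-optimal $(v^N,\nu^N)$ in the representation; boundedness of $F$ makes the costs $\tfrac12\Expect{\sum_i\int_0^T|v^N_i|^2}+\tfrac1N R(\nu^N\|\Phi^N)$ bounded uniformly in $N$. From this uniform bound I would derive, following the hydrodynamic-limit analysis of \cite{guopapvar} but now for the law of the controlled process, the crucial bounds on the relative entropy of the controlled time-marginal densities with respect to $\Phi^N$ and on the associated Dirichlet forms; these rely essentially on the regularity of the controlled density (Lemma~\ref{reglemma}) and the It\^o-formula identity (Lemma~\ref{Ilemma}). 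The bounds make $\bar\mu^N$ (via Lemma~\ref{mubartight}), the controls $v^N$, and the empirical measures of the initial configurations jointly tight, and Theorem~\ref{HL} identifies any subsequential limit $(\bar\mu,u,\pi)$: almost surely $\bar\mu\in\clm_\infty(u,\pi)$. The decisive step is the ``replacement'' of the microscopic currents built from $\phi'(\bar X^N_i)$ by $h'(\bar m)$ in the limit, which here is obtained from the Dirichlet form bound rather than from a superexponential probability estimate or an eigenvalue analysis; this is the hardest part. Weak lower semicontinuity of $u\mapsto\tfrac12\int_0^T\!\!\int_S|u|^2$ and of $\pi\mapsto R(\pi\|\pi_0)$, Fatou's lemma, and continuity of $F$ then yield
\begin{equation*}
\begin{split}
\liminf_N\Big(-\tfrac1N\log\Expect{e^{-NF(\mu^N)}}\Big)&\ge\Expect{\tfrac12\int_0^T\!\!\int_S|u(s,\theta)|^2\,d\theta\,ds+R(\pi\|\pi_0)+F(\bar\mu)}-\eps\\
&\ge\Expect{I(\bar\mu)+F(\bar\mu)}-\eps\ge\inf_\mu\{F(\mu)+I(\mu)\}-\eps,
\end{split}
\end{equation*}
and letting $\eps\downarrow0$ completes the Laplace principle.

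Finally, to show $\{\mu:I(\mu)\le M\}$ is compact, given $I(\mu)\le M$ I would take $(u,\pi)$ with $\mu\in\clm_\infty(u,\pi)$ and $\tfrac12\int_0^T\!\!\int_S|u|^2+R(\pi\|\pi_0)\le M+1$. The identity $R(\pi\|\pi_0)=\int_S R(\pi_1(\cdot\mid\theta)\|\Phi)\,d\theta\ge\int_S h(m_0(\theta))\,d\theta$, together with the energy estimate for \eqref{ratepde} obtained by testing against $h'(m)$ and using Young's inequality, bounds $\int_{[0,T]\times S}h(m)$ and $\int_{[0,T]\times S}[h'(m)]_\theta^2$ uniformly over $\{I\le M\}$; since $M(\lambda)<\infty$ for all $\lambda$, $h$ is superlinear, so $\{I\le M\}\subset\Om^l$ for some $l=l(M)$, and feeding these bounds into the weak form \eqref{eq:weakpde} gives equicontinuity, hence relative compactness, in $\Om^l$. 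Closedness follows from weak compactness of bounded sets in $L^2$ and of level sets of relative entropy, lower semicontinuity of the cost functional, and passage to the limit in \eqref{eq:weakpde} using Lemma~\ref{continu} and the compact embedding $H^1(S)\subset L^2(S)$ to upgrade weak convergence of the $\mu_n$ to a.e.\ convergence of the $h'(m_n)$. Hence $I$ is a rate function and the theorem follows.
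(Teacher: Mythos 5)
Your treatment of the Laplace asymptotics coincides in all essentials with the paper's proof of Theorem~\ref{thm:lapasymp}: the stochastic control representation of Lemma~\ref{lem:repnmod}, the entropy and Dirichlet-form estimates (Lemmas~\ref{entlemma}, \ref{Ilemma}) feeding tightness (Lemma~\ref{mubartight}) and the controlled hydrodynamic limit (Theorem~\ref{HL}), near-optimal smooth controls together with the well-posedness results (Lemmas~\ref{existence}, \ref{continu}) for the complementary inequality, and lower semicontinuity plus Fatou to close the argument. You have also correctly located the crux -- the replacement step executed through the Dirichlet-form bound in place of a superexponential estimate.

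Where you depart from the paper is in the compactness of sublevel sets, and this is also where there is a real gap. You propose a deterministic PDE argument: test \eqref{ratepde} against $h'(m)$, apply Young's inequality to bound $\sup_t\int_S h(m(t,\theta))\,d\theta$ and $\int_{[0,T]\times S}[h'(m)]_\theta^2$ uniformly over $\{I\le M\}$, invoke superlinearity of $h$ for a time-uniform total-variation bound and hence containment in some $\Om^l$, then get relative compactness and closedness by Arzel\`a--Ascoli and lower semicontinuity. The paper's Lemma~\ref{ALM} proceeds probabilistically: any $\mu^n$ with $I(\mu^n)\le M$ is approximated by a controlled empirical measure $\bar\mu^{N_n,n}$, the same tightness and characterization machinery (Lemmas~\ref{mubartight}, \ref{convtopistar}, Theorem~\ref{HL}) extracts a convergent subsequence and identifies the limit as lying in $\Gamma_{l,M}$, and the containment $\Gamma_M\subset\Gamma_{l,M}$ for a fixed $l$ is then obtained by contradiction from the exponential tightness estimate \eqref{GPVest} combined with Proposition~\ref{union}(a) and the Laplace lower bound. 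The paper's route stays entirely inside the weak-convergence framework; yours opens a regularity issue it was designed to avoid. In \eqref{eq:weakpde} the admissible test functions are smooth and $\theta$-only; multiplying by $h'(m(t,\theta))$, which for $m$ satisfying only \eqref{hl1}--\eqref{hl2} is neither smooth nor time-independent, requires a chain-rule-for-entropy or Steklov-averaging argument that you have not supplied. A parallel gap appears in your closedness argument, where you need to pass to the limit in the nonlinear term $h'(m_n)$ and have only gestured at why a.e.\ convergence holds.

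Two smaller points: your exponential tightness sketch controls $\|\mu^N(t)\|_{TV}$ at a fixed $t$, but $\{\mu^N\notin\Om^l\}$ involves $\sup_{t\in[0,T]}$, so a maximal-inequality or modulus-of-continuity step is still required -- this is exactly \cite[Lemma~6.1]{guopapvar}, recorded as \eqref{GPVest}. And the reduction from Laplace asymptotics on the non-Polish direct-limit space $\Om$ to the LDP is not simply Bryc--Varadhan applied on each $\Om^l$; it is precisely Proposition~\ref{union}, whose proof in Section~\ref{sec:prfpropun} requires genuine work with the direct-limit topology, and you have invoked the conclusion without flagging that it needs to be established.
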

\begin{rem}\label{DVrate}
In \cite{donvar5}, Donsker and Varadhan proved a large deviations principle for $\mu^N$ with rate function 
\begin{equation*}
\tilde{I}(\mu) = \int_S h(m(0,\theta)) d\theta +\int_0^T |\partial_t m(t,\theta) - [h'(m(t,\theta))]_{\theta\theta}|_{-1} dt,
\end{equation*}
 if $\mu$ has a density such that $\mu(t,d\theta)=m(t,\theta)d\theta,$ and $I(\mu)=\infty$ otherwise. Here, $|\cdot|_{-1}$ denotes the Sobolev $H_{-1}$ semi-norm, i.e. the dual of the Sobolev $H_{1}$ semi-norm (see \cite{adafou}),
By the uniqueness of rate functions (see for example \cite[Theorem 1.3.1]{dupell4}) and Theorem \ref{MainResult}, it follows that $I=\tilde{I}.$ This fact  can also be verified directly by using the identities 
\begin{align*}
 \int_S h(m(0,\theta)) d\theta 
=\inf\left\{R(\pi\|\pi_0): \pi \in \mathcal{P}_*(\mathbb{R}\times S) \text{ and } \int_{\mathbb{R}}x\pi_1(dx|\theta) = m(0,\theta) \text{ for every }\theta\in S\right\} 
\end{align*}
and
\begin{align*}
&\int_0^T |\partial_t m(t,\theta) - [h'(m(t,\theta))]_{\theta\theta}|_{-1} dt\\
&\quad=\inf\left\{\int_0^T\int_S |u(t,\theta)|^2 d\theta dt: u \in L^2([0,T]\times S) \text{ and } \partial_\theta u = \partial_t m - \frac{1}{2} [h'(m(t,\theta)]_{\theta\theta}\right\}.
\end{align*}
\end{rem}

\noindent {\em Organization.} Rest of the paper is organized as follows. In Section \ref{sec:pfmainth} we provide the proof of our main result, namely Theorem \ref{MainResult}. The proof relies on Proposition \ref{union} which says that it suffices to prove certain Laplace asymptotics and compactness properties of level sets of $I$. These properties are established on Theorem \ref{thm:lapasymp}
and Lemma \ref{ALM}. Proofs of Proposition \ref{union}, Theorem \ref{thm:lapasymp}, and Lemma \ref{ALM} are given in Sections \ref{sec:prfpropun},  \ref{sec:lapasymp}, and \ref{sec:prooflemalm} respectively.

Section \ref{sec:lapasymp} that establishes the desired Laplace asymptotics (namely Theorem \ref{thm:lapasymp}) relies on several other results. The first two key lemmas are Lemma \ref{entlemma} and \ref{Ilemma} that give suitable bounds on relative entropies and Dirichlet forms.  The proofs of these two lemmas and of a key lemma on regularity of densities of controlled processes (Lemma \ref{reglemma}) are given in Section \ref{entsection}. Proof of Theorem \ref{thm:lapasymp} also uses a potpourri of tightness and weak convergence results (Lemmas \ref{mubartight}--\ref{convtopistar}) some of which are quite standard. Proof of Lemma \ref{mubartight} is in Section \ref{proofmubartight} while Lemmas \ref{lbartight}--\ref{convtopistar} are proved in Section \ref{prooflemmas}. Another important result needed in the proof of Theorem \ref{thm:lapasymp} is the characterization of weak limits of controls and controlled processes.  This result, formulated in Theorem \ref{HL} is proved in Section \ref{ProofHL}. The final set of results needed for the proof of Theorem \ref{thm:lapasymp} are Lemmas \ref{existence} and \ref{continu} that give existence, uniqueness and continuity properties of controlled hydrodynamic PDE (equation (\ref{ratepde})). These lemmas are proved in Section \ref{weakuniquesec}. Based on the above results, proof of Theorem \ref{thm:lapasymp} is completed in Sections \ref{LUB} and \ref{LLB}. 

Thus the overall organization is as follows. Section \ref{sec:lapasymp}: Proof of Theorem \ref{thm:lapasymp};
Section \ref{sec:prooflemalm}: Proof of Lemma \ref{ALM}; Section \ref{ProofHL}: Proof of Theorem \ref{HL}; Section
\ref{entsection}: Proofs of Lemmas \ref{entlemma} and \ref{Ilemma} and the regularity lemma, Lemma \ref{reglemma};
Section \ref{proofmubartight}: Proof of Lemma \ref{mubartight}; Section \ref{prooflemmas}:
Proofs of Lemmas \ref{lbartight}, \ref{limitssame},  \ref{UnifInt}, and  \ref{convtopistar}; Section \ref{sec:prfpropun}:
Proof of Proposition \ref{union}; Section \ref{weakuniquesec}: Proofs of Lemmas \ref{existence} and \ref{continu}. Finally Section \ref{sec:pflemtop} gives the proof of  Lemma \ref{lem:topprops}.

\section{Proof of Theorem \ref{MainResult}}
\label{sec:pfmainth}
In this section we present the proof of Theorem \ref{MainResult}. The main ingredient in the approach we take
is the following result which says that in order to prove the  large deviation principle it suffices to prove certain Laplace 
asymptotics and certain compactness properties of the function $I$. Recall that a function $I: \cle \to [0,\infty]$ is called a {\em rate function} if it has compact sublevel sets, i.e. for every $M<\infty$, the set $\{x: I(x)\le M\}$ is a compact subset of $\cle$.
For a sequence of random variables $Z^N$ taking values in a Polish space $\cle$, it is well known that (cf. \cite[Theorem 1.2.3]{dupell4}) the large deviations principle is equivalent to the Laplace principle, that is, $Z^N$ satisfies the large deviations principle with rate function $I$ if and only if $I$ has compact sub-level sets and for all bounded and continuous functions $F$,
\begin{equation}
\label{LP}\lim_{N\rightarrow\infty}\frac{1}{N} \log \mathbb{E}\exp(-NF(Z^N)) = -\inf_{x\in \cle} \{I(x) + F(x)\}.
\end{equation}
 Since $\Omega$ is not a Polish space, but instead an infinite union of Polish spaces,  we will need the following generalization of the above result. Proof is given in Section \ref{sec:prfpropun}.

For a set $A \subset \Om$, $A^l$ will denote
$A \cap \Om^l$. For $A \subset \Om$ and $I:\Om \to [0,\infty]$, let
$I(A) \doteq \inf_{x\in A}I(x)$.
\begin{proposition}\label{union}
 Suppose that $\{Z^N\}_{N\in \NN}$ is a sequence of $\Om$-valued random variables such that 
\begin{equation}
\label{expdecay}\lim_{l\rightarrow\infty}\limsup_{N\rightarrow\infty}\frac{1}{N}\log\left(\mathbb{P}(Z^N\in \Om\setminus\Om^l)\right) = -\infty. 
\end{equation}
Let $I:\Om \to [0,\infty]$.
Then the following hold.
\begin{enumerate}[(a)]
	\item If for all continuous and bounded $g: \Om \to \RR$
	\begin{equation}\label{eq:lapopen}
		\liminf_{N\to \infty} \frac{1}{N} \log \EE[\exp(-Ng(Z^N))] \ge -\inf_{\mu\in \Om}\{g(\mu) + I(\mu)\},
	\end{equation}
	then for every open set $G \subset \Om$, 
	\begin{equation}
	\label{lowergen}\liminf_{N\rightarrow\infty} \frac{1}{N}\log \mathbb{P}(Z^N\in G) \ge -I(G).
	\end{equation}
	\item Suppose that for every  $M<\infty$, the set
	$\Gamma_{M}  \doteq \{\mu\in\Omega:I(\mu)\leq M\}$ is a compact subset of $\Om$. 
If for all continuous and bounded $g: \Om \to \RR$
\begin{equation}\label{eq:lapclos}
	\limsup_{N\to \infty} \frac{1}{N} \log \EE[\exp(-Ng(Z^N))] \le -\inf_{\mu\in \Om}\{g(\mu) + I(\mu)\},
\end{equation}
then for every closed set $F\subset \Om$
\begin{equation}
\label{upperl}\limsup_{N\rightarrow\infty} \frac{1}{N}\log \mathbb{P}(Z^N\in F^l) \le -I(F^l), \; \mbox{ for all } l \in \NN,
\end{equation}
and
\begin{equation}
\label{uppergen}\limsup_{N\rightarrow\infty} \frac{1}{N}\log \mathbb{P}(Z^N\in F) \le -I(F).
\end{equation}
%
%

\end{enumerate}

\end{proposition}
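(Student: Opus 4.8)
The plan is to reduce the statements about open and closed subsets of the direct-limit space $\Om$ to the corresponding Laplace-principle statements on the Polish spaces $\Om^l$, using the tail-decay hypothesis \eqref{expdecay} to control the mass outside $\Om^l$. For part (a), fix an open set $G\subset\Om$ and a point $\mu\in G$; by definition of the direct limit topology, $\mu\in\Om^l$ for some $l$ and $G^l$ is open in $\Om^l$. First I would choose, for small $\delta>0$, a bounded continuous function $g:\Om\to\RR$ that is $0$ on a small $d_*$-ball around $\mu$ inside $G^l$ and increases to a large constant $1/\delta$ outside; the existence of such a function on $\Om$ follows because $d_*(\cdot, F^l)$-type functions are continuous on $\Om$ (Lemma \ref{lem:topprops}(c)). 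Applying \eqref{eq:lapopen} to $Ng$ and using $\EE[\exp(-Ng(Z^N))]\le \PP(Z^N\in B)+e^{-N/\delta}$ for the ball $B$ where $g$ vanishes, one gets $\liminf_N \frac1N\log\PP(Z^N\in B)\ge -g-I$ evaluated near $\mu$, hence $\ge -I(\mu)-\delta$ for the relevant choices; since $B\subset G$ and $\mu\in G$ was arbitrary, taking suprema over $\mu$ and letting $\delta\downarrow0$ yields \eqref{lowergen}. This is the standard Laplace-to-large-deviations lower bound argument, just carried out one Polish stratum at a time.

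For part (b), the compactness of the sublevel sets $\Gamma_M$ is what lets the closed-set upper bound go through. Fix a closed $F\subset\Om$ and $l\in\NN$. Then $F^l$ is closed in $\Om^l$, and I would first prove \eqref{upperl}: given $\eps>0$, set $M = I(F^l)-\eps$ (truncating if $I(F^l)=\infty$) and use the continuous function $h(x)=d_*(x,F^l)$ from Lemma \ref{lem:topprops}(c). Because $\Gamma_M$ is compact and disjoint from $F^l$ when $M<I(F^l)$, one has $\inf_{\Gamma_M} h = c > 0$; build a bounded continuous $g$ which is $\ge M/c \cdot h$ near $F^l$ — more carefully, choose $g$ bounded continuous with $g\ge M$ on a neighborhood of $F^l$ and $g\le M$ only where $I>M$ fails, i.e. arrange $g+I\ge M$ everywhere while $g\ge M$ on $F^l$. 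Feeding this into \eqref{eq:lapclos} and using $\PP(Z^N\in F^l)\le e^{NM}\,\EE[\exp(-Ng(Z^N))]$ gives $\limsup_N\frac1N\log\PP(Z^N\in F^l)\le -M = -I(F^l)+\eps$; let $\eps\downarrow0$. Finally, to pass from \eqref{upperl} to \eqref{uppergen}, write $\PP(Z^N\in F)\le \PP(Z^N\in F^l)+\PP(Z^N\in\Om\setminus\Om^l)$, apply $\frac1N\log(a+b)\le \frac1N\log 2 + \max(\frac1N\log a,\frac1N\log b)$, take $\limsup_N$ and then $l\to\infty$: the first term is bounded by $-\liminf_l I(F^l) = -I(F)$ (since $F^l\uparrow F$, so $I(F^l)\downarrow I(F)$) and the second term goes to $-\infty$ by \eqref{expdecay}.

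The main obstacle is the construction in part (b) of a bounded continuous test function $g$ on the direct-limit space $\Om$ that simultaneously (i) is bounded below by $M$ on a neighborhood of the closed set $F^l$ and (ii) satisfies $g+I\ge M$ globally on $\Om$. Point (ii) is where compactness of $\Gamma_M$ is essential: on the complement of $\Gamma_M$ the inequality $g+I\ge M$ is automatic once $g\ge 0$, so one only needs $g\ge M$ on the compact set $\Gamma_M\cap(\text{nbhd of }F^l)$, but this set is disjoint from $F^l$ (as $I(F^l)>M$) and compact, so $h=d_*(\cdot,F^l)$ is bounded below by some $c>0$ on it, and $g \doteq \min\{M, (M/c)\,h\}$ works — this is bounded, and continuous on $\Om$ by Lemma \ref{lem:topprops}(c). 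One subtlety to be careful about: continuity of $g$ must be verified in the direct limit topology, not just on each $\Om^l$, but Lemma \ref{lem:topprops}(c) is stated precisely for that purpose, and boundedness is immediate. The analogous (easier) construction in part (a) uses the same lemma. All remaining steps are the routine $\frac1N\log$-of-a-sum manipulations and the elementary monotonicity $I(F^l)\downarrow I(F)$.
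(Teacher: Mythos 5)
Your part (b) is essentially correct once the self-contradictory description of the test function is cleaned up, but your part (a) has a genuine gap.

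\textbf{Part (a).} Your argument never invokes the tail-decay hypothesis \eqref{expdecay}, which is a warning sign. The paper's proof of (a) uses \eqref{expdecay} crucially, via Lemma \ref{unifdec}, and here is where you actually need it. You pick $\mu\in G$, $\mu\in\Om^l$, a radius $r$ with $B^l\doteq\{y\in\Om^l:d_*(y,\mu)<r\}\subset G^l$, and a function $g$ that vanishes near $\mu$ and equals $1/\delta$ when $d_*(y,\mu)\ge r$. The bound $\EE[\exp(-Ng(Z^N))]\le\PP(Z^N\in B)+e^{-N/\delta}$ forces $B$ to be the $d_*$-ball in the \emph{whole} of $\Om$, namely $B=\{y\in\Om:d_*(y,\mu)<r\}$. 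But $B\not\subset G$ in general: the direct-limit topology is strictly finer than the $d_*$-metric topology, so $G$ being open only guarantees that $B\cap\Om^l=B^l\subset G$, not that $B\cap\Om^{l'}\subset G$ for $l'>l$ (the radius of admissible balls in $\Om^{l'}$ may shrink to zero as $l'\to\infty$, because $\Om^l$ has empty interior in $\Om^{l'}$ under $d_*$). Thus the inequality you derive, $\liminf_N\frac1N\log\PP(Z^N\in B)\ge -I(\mu)$, does not yield \eqref{lowergen}. The fix is exactly the paper's: compare $\EE[\exp(-Ng(Z^N))\mathbb{I}_{\Om^l}(Z^N)]$ to $\EE[\exp(-Ng(Z^N))]$, which restores $B^l\subset G$; the gap between the two Laplace functionals is controlled by \eqref{expdecay} (Lemma \ref{unifdec}).

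\textbf{Part (b).} The idea is sound and in fact gives a slightly more direct route than the paper's, which uses a sequence $h_j = j\min\{d_*(\cdot,F^l),1\}$ followed by a compactness-and-contradiction argument to show $\liminf_j\inf\{h_j+I\}\ge I(F^l)$. Your single test function $g=\min\{M,(M/c)h\}$ with $h=d_*(\cdot,F^l)$ and $c=\inf_{\Gamma_M}h>0$ achieves $\inf_\Om\{g+I\}\ge M$ in one step (on $\Gamma_M$ you have $g=M$; off $\Gamma_M$ you have $I>M$). However, your write-up is internally inconsistent: you twice ask for ``$g\ge M$ on a neighborhood of $F^l$'' and invoke $\PP(Z^N\in F^l)\le e^{NM}\,\EE[\exp(-Ng(Z^N))]$, which in fact requires $g\le M$ on $F^l$, whereas your final construction has $g\equiv 0$ on $F^l$ (since $h=0$ there). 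The correct chain is $\PP(Z^N\in F^l)\le\EE[\exp(-Ng(Z^N))]$ (no $e^{NM}$ factor), then $\limsup_N\frac1N\log\PP(Z^N\in F^l)\le-\inf_\Om\{g+I\}\le -M=-I(F^l)+\eps$. The passage from \eqref{upperl} to \eqref{uppergen} via $\log(a+b)\le\log 2+\max(\log a,\log b)$, monotone convergence $I(F^l)\downarrow I(F)$, and \eqref{expdecay} is the same as the paper's Lemma \ref{suffcond} and is fine.
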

The following result shows that for the collection $\{\mu^N\}$ introduced in \eqref{empmeas}, the Laplace asymptotics of the 
form needed in Proposition \ref{union} are satisfied. Proof is given in Section \ref{sec:lapasymp}.

\begin{thm}\label{thm:lapasymp}
For all bounded and continuous $g: \Om \to \RR$,
\begin{equation*}
	\lim_{N\rightarrow\infty}-\frac{1}{N} \log \mathbb{E} \exp(-Ng(\mu^N)) = \inf_{\mu \in \Omega} \left[I(\mu)+g(\mu)\right]
\end{equation*}
where $I$ is defined by \eqref{eq:defnratefn}.
\end{thm}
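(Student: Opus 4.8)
\textbf{Proof proposal for Theorem \ref{thm:lapasymp}.}

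The plan is to establish the two one-sided bounds (Laplace upper bound and Laplace lower bound) separately, since the limit statement is equivalent to
\[
\limsup_{N\to\infty}-\frac1N\log\EE\exp(-Ng(\mu^N)) \le \inf_{\mu}\{I(\mu)+g(\mu)\}
\]
together with the reverse inequality for $\liminf$. Both directions start from the same representation: using the stochastic control representation for exponential functionals of Brownian motion (Lemma \ref{lem:repnmod}, following \cite{boudup, budfanwu}), we write
\[
-\frac1N\log\EE\exp(-Ng(\mu^N)) = \inf_{v}\EE\!\left[\frac12\int_0^T\|v^N(s)\|^2\,ds + g(\bar\mu^N)\right],
\]
where the infimum is over controls $v^N$ adapted to the Brownian filtration, and $\bar\mu^N$ is the empirical measure built from the \emph{controlled} diffusion $\bar X^N$ obtained by adding the drift $v^N$ to \eqref{align:uncontroleq}. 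The left-hand side is thus squeezed between the cost of any fixed near-optimal control (for the upper bound on $\limsup$) and below by a weak-limit argument over all controls (for the lower bound on $\limsup$, i.e.\ the upper bound on the Laplace functional).

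For the Laplace \emph{lower bound} (the inequality $\liminf \ge \inf\{I+g\}$, needed for \eqref{eq:lapopen}), I would take a nearly optimal pair $(u,\pi)$ for $I(\mu)$ with $\mu \in \mathcal M_\infty(u,\pi)$, plus $\eps$-optimality in $g$, and construct from $(u,\pi)$ an explicit sequence of controls $v^N$ for the prelimit problem whose empirical measures $\bar\mu^N$ converge (in probability, in $\Om$) to $\mu$ and whose normalized costs $\frac12\EE\int_0^T\|v^N\|^2$ converge to $\frac12\int_0^T\int_S|u|^2 + R(\pi\|\pi_0)$; the initial-condition part of the cost is handled by choosing $\bar X^N(0)$ with law having density proportional to an $N$-fold product tilting of $\Phi^N$ encoding $\pi$, whose relative entropy against $\Phi^N$ is $\approx N R(\pi\|\pi_0)$. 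Continuity of $g$ then gives $g(\bar\mu^N)\to g(\mu)$, and combining yields $\limsup(-\frac1N\log\EE e^{-Ng}) \le \frac12\int|u|^2 + R(\pi\|\pi_0) + g(\mu) \le I(\mu)+g(\mu)+2\eps$; optimizing over $\mu$ and letting $\eps\downarrow 0$ gives one direction.

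For the Laplace \emph{upper bound} (the inequality $\limsup \le \inf\{I+g\}$, equivalently a \emph{lower bound} on the Laplace functional, needed for \eqref{eq:lapclos}), I would take a sequence of controls $v^N$ that is within $1/N$ of the infimum and show that the normalized costs stay bounded (otherwise the bound is trivial since $g$ is bounded). The bounded-cost assumption feeds into the key relative-entropy and Dirichlet-form estimates (Lemmas \ref{entlemma} and \ref{Ilemma}, themselves relying on the regularity Lemma \ref{reglemma}), which — exactly as in the law-of-large-numbers analysis of \cite{guopapvar} — give tightness of $\{\bar\mu^N\}$ and of the associated controls and occupation measures (Lemmas \ref{mubartight}--\ref{convtopistar}). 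Passing to a subsequential weak limit and invoking Theorem \ref{HL} identifies the limit: the limiting empirical measure $\mu$ lies in some $\mathcal M_\infty(u,\pi)$ with $u,\pi$ the weak limits of the controls/occupation measures, and by lower semicontinuity of the $L^2$-norm and of relative entropy, $\liminf\EE[\frac12\int\|v^N\|^2] \ge \frac12\int|u|^2 + R(\pi\|\pi_0) \ge I(\mu)$. Together with $g(\bar\mu^N)\to g(\mu)$ (continuity of $g$; here one also needs that the limit stays in a fixed $\Om^l$, which follows from \eqref{TVbound} and the entropy bounds), Fatou's lemma gives $\liminf \EE[\frac12\int\|v^N\|^2 + g(\bar\mu^N)] \ge I(\mu)+g(\mu) \ge \inf_\mu\{I+g\}$.

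The main obstacle is the upper-bound direction, specifically the chain Lemma \ref{reglemma} $\Rightarrow$ Lemma \ref{Ilemma} $\Rightarrow$ the relative-entropy/Dirichlet-form control: establishing enough regularity of the density of the controlled process to justify the It\^o-formula computation that produces the crucial a priori bound, and then showing that these bounds survive the passage to the weak limit so that Theorem \ref{HL} applies and the limiting $(u,\pi)$ is admissible (i.e.\ $\mu\in\tilde\Omega$ with \eqref{hl1}--\eqref{hl2} and $\pi\in\mathcal P_*(\RR\times S)$). The replacement of local empirical averages by functions of the density field — which in the classical approach is the superexponential estimate — is here replaced by weak-convergence and martingale arguments, and checking that these go through uniformly under the controlled laws (rather than the stationary law) is the technically demanding part.
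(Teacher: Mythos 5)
Your plan matches the paper's proof of Theorem \ref{thm:lapasymp} in its essentials: the stochastic control representation via Lemma \ref{lem:repnmod}, the split into two one-sided Laplace bounds, and (for the direction feeding \eqref{eq:lapclos}) the chain of tightness results (Lemmas \ref{entlemma}, \ref{Ilemma}, \ref{mubartight}--\ref{convtopistar}), subsequential limit characterization (Theorem \ref{HL}), and lower semicontinuity of cost together with Fatou. You also correctly single out the regularity/entropy/Dirichlet-form chain (Lemmas \ref{reglemma}, \ref{entlemma}, \ref{Ilemma}) as the technical bottleneck.

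There is one genuine omission in your sketch of the other direction (the one underlying \eqref{eq:lapopen}). You propose to take a near-optimal $(u,\pi)$ with $\mu\in\mathcal{M}_\infty(u,\pi)$ and ``construct from $(u,\pi)$ an explicit sequence of controls $v^N$ whose empirical measures $\bar\mu^N$ converge to $\mu$,'' but this cannot be done directly for a general $u\in L^2([0,T]\times S)$: the time-space discretization \eqref{psichoice} requires pointwise evaluation of $u$ (and uniform convergence of the discretization, which uses continuity), and the admissible controls in $\cla_s^N$ must satisfy the uniform bound \eqref{controlas}. Moreover, identifying the subsequential limit of $\bar\mu^N$ as the intended $\mu$ requires uniqueness of solutions to the controlled PDE \eqref{ratepde}, which is only available through Lemma \ref{continu}. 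The paper therefore first mollifies $u^*$ to a $u^{**}\in C^\infty$ with $\|u^{**}-u^*\|_2$ small, uses Lemma \ref{existence} to produce a weak solution $\bar m^{**}$ with this smooth control, and then invokes Lemma \ref{continu} both for uniqueness (to pin down the limit of the constructed $\bar\mu^N$ as $\bar\mu^{**}$) and for continuity of the solution map in $d_*$ (to ensure $F(\bar\mu^{**})$ is within $\eps$ of $F(\bar\mu^*)$). Without this smoothing-plus-well-posedness step, your construction is not well defined and the limit is not identified. (Incidentally, the parenthetical inequalities you attach to the two directions are sign-reversed relative to \eqref{eq:lapopen}--\eqref{eq:lapclos}; this is purely a labeling slip, as the substance of each half is associated with the correct direction.)
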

The following result gives a compactness property of sub-level sets of $I$. The proof is given in Section \ref{sec:prooflemalm}.
\begin{lemma}\label{ALM}
	Let $I$ be as in \eqref{eq:defnratefn}.
For all $l\in \NN$ and $M<\infty$ the set $\Gamma_{l,M}  \doteq \{\mu\in\Omega^l:I(\mu)\leq M\}$ is a compact subset
of $\Om^l$.
\end{lemma}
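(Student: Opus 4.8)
The plan is to show that $\Gamma_{l,M}$ is sequentially compact in $\Om^l$, which by Lemma \ref{lem:topprops}(a) is equivalent to showing it is relatively compact and closed in the metric $d_*$ of \eqref{suplip}. So let $\{\mu^n\}$ be a sequence in $\Gamma_{l,M}$, with $\mu^n(t,d\theta) = m^n(t,\theta)\,d\theta$. By definition of $I$, for each $n$ there exists a near-optimal pair $(u^n,\pi^n) \in L^2([0,T]\times S)\times \clp_*(\RR\times S)$ such that $\mu^n \in \clm_\infty(u^n,\pi^n)$ and
\begin{equation*}
\frac12\int_0^T\!\!\int_S |u^n(s,\theta)|^2\,d\theta\,ds + R(\pi^n\|\pi_0) \le M+1.
\end{equation*}
First I would use these uniform bounds to extract compactness of the data: the $u^n$ are bounded in $L^2([0,T]\times S)$, hence (along a subsequence) converge weakly to some $u$ with $\frac12\|u\|^2_{L^2}\le \liminf$; and the $\pi^n$, having uniformly bounded relative entropy with respect to $\pi_0$, form a tight (hence relatively weakly compact) family in $\clp(\RR\times S)$, converging along a further subsequence to some $\pi$ with $R(\pi\|\pi_0)\le\liminf R(\pi^n\|\pi_0)$ by lower semicontinuity of relative entropy. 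One must check $\pi\in\clp_*(\RR\times S)$ and that the initial profiles $m_0^n(\theta)=\int x\,\pi^n_1(dx|\theta)$ converge in a suitable sense to $m_0(\theta)=\int x\,\pi_1(dx|\theta)$ with $\int_S h(m_0)\,d\theta<\infty$; here the bound \eqref{hl1} from $\mu^n\in\tilde\Om$ together with superlinearity of $h$ gives uniform integrability of $\{m_0^n\}$ in $L^1(S)$, and the convexity/lower-semicontinuity of $h$ controls $\int_S h(m_0)$.

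Next I would establish compactness of the paths $\{\mu^n\}$ themselves in $\Om^l$. Since all $\mu^n\in\Om^l$, we have the uniform total-variation bound $\sup_t\|\mu^n(t,\cdot)\|_{TV}\le l$; combined with the bounds \eqref{hl1}--\eqref{hl2} and the weak PDE \eqref{eq:weakpde}, for each fixed smooth test function $J$ on $S$ the real-valued maps $t\mapsto\langle\mu^n(t),J\rangle$ are equibounded and equicontinuous (the modulus of continuity comes from the right side of \eqref{eq:weakpde}, controlled by $\|h'(m^n)\|_{L^2}$ — itself controlled via \eqref{hl1}, \eqref{hl2}, and Poincaré on $S$ — and $\|u^n\|_{L^2}$). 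By Arzelà–Ascoli plus a diagonal argument over a countable dense set of test functions in $BL_1(S)$, and using that $\clm_S^l$ is compact metric under $d_{BL}$, I would extract a further subsequence along which $\mu^n\to\mu$ in $d_*$ for some $\mu\in\Om^l$, with each $\mu(t,d\theta)=m(t,\theta)\,d\theta$; the limiting density $m$ inherits the bounds \eqref{hl1}--\eqref{hl2} by weak lower semicontinuity of the relevant convex functionals (so $\mu\in\tilde\Om$), and one may take $[h'(m^n)]_\theta \rightharpoonup [h'(m)]_\theta$ weakly in $L^2$ along the subsequence.

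Then I would pass to the limit in the weak PDE: writing \eqref{eq:weakpde} for $(m^n,u^n)$, the left side converges because $\langle\mu^n(t),J\rangle\to\langle\mu(t),J\rangle$, the term $\int_0^t\int_S J'u^n\to\int_0^t\int_S J'u$ by weak $L^2$-convergence of $u^n$, and $\int_0^t\int_S J'' h'(m^n)\to\int_0^t\int_S J'' h'(m)$ by the weak convergence $h'(m^n)\rightharpoonup h'(m)$ in $L^2$ (using $[h'(m^n)]$ bounded in $H^1([0,T]\times S)$ plus the $L^1$ control, after identifying limits). Hence $m$ solves \eqref{ratepde} with data $u$ and initial condition $m_0$, i.e. $\mu\in\clm_\infty(u,\pi)$, so
\begin{equation*}
I(\mu)\le \frac12\|u\|^2_{L^2}+R(\pi\|\pi_0)\le \liminf_{n}\left[\tfrac12\|u^n\|^2_{L^2}+R(\pi^n\|\pi_0)\right]\le M+1;
\end{equation*}
but in fact, applying this construction to an arbitrary subsequence of a sequence in $\Gamma_{l,M}$ and using that the $d_*$-limit is unique, one gets both that every sequence in $\Gamma_{l,M}$ has a subsequence converging in $\Om^l$ and that the limit lies in $\Gamma_{l,M}$ (the value $M+1$ can be tightened to $M$ by choosing the near-optimizers with error $\to 0$). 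This proves $\Gamma_{l,M}$ is compact in $\Om^l$.

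The main obstacle I anticipate is the passage to the limit in the nonlinear term $h'(m^n)$ in \eqref{eq:weakpde}: weak $L^2$-convergence of $m^n$ alone does not yield convergence of $h'(m^n)$ since $h'$ is nonlinear. The resolution is to get genuine strong/pointwise compactness of $\{m^n\}$, or at least of $\{h'(m^n)\}$, by combining the spatial $H^1$-bound on $h'(m^n)$ (from \eqref{hl2}) with the temporal equicontinuity coming from the PDE — an Aubin–Lions-type compactness argument on $[0,T]\times S$ — and then use monotonicity of $h'$ (since $h$ is convex) to identify the weak limit of $h'(m^n)$ as $h'$ of the limit. Making this rigorous, including the low-regularity bookkeeping near points where $m^n$ may be small and $h'$ steep, is the technical heart of the argument; it is essentially the same estimate that underlies Lemmas \ref{existence} and \ref{continu}, so I would lean on the machinery developed there.
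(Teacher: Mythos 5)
Your proposal takes a genuinely different route from the paper, and the two are worth contrasting. You attempt a purely analytic compactness argument: extract weak limits of the near-optimizing data $(u^n,\pi^n)$, get equicontinuity/Arzel\`a--Ascoli compactness of the paths $\mu^n$ in $\Om^l$, and pass to the limit in \eqref{eq:weakpde} via an Aubin--Lions type strong-compactness argument for the nonlinear term $h'(m^n)$. The paper instead routes through the probabilistic machinery already built for the Laplace asymptotics. It picks smooth approximations $u^{n,*}$ of $u^n$ so that the corresponding PDE solution $\mu^{n,*}$ satisfies $d_*(\mu^n,\mu^{n,*})\le 1/n$ (via Lemmas \ref{existence} and \ref{continu}), then constructs the $N$-particle controlled empirical measures $\bar\mu^{N,n}$ from $(\pi^n,u^{n,*})$ and chooses $N_n$ so that $\bar\mu^{N_n,n}$ is close to $\mu^{n,*}$ with high probability. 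Tightness of $\{\bar\mu^{N_n,n}\}$ (Lemma \ref{mubartight}), which rests on the uniform entropy and Dirichlet-form bounds, gives a convergent subsequence, and Theorem \ref{HL} characterizes the limit as an element of $\mathcal{M}_\infty(u^*,\pi^*)$. The diagonal argument then transfers compactness back to $\{\mu^n\}$. The payoff of the paper's route is that all the strong-compactness and identification-of-the-nonlinear-limit work you anticipate is already encapsulated in Lemmas \ref{entlemma}, \ref{Ilemma}, \ref{mubartight} and Theorem \ref{HL}, and does not have to be re-derived at the PDE level.

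Two points where your outline is thinner than it looks and where the paper's route dodges real difficulties. First, conditions \eqref{hl1}--\eqref{hl2} merely assert finiteness for each fixed $\mu\in\tilde\Om$; they do \emph{not} give uniform-in-$n$ bounds on $\iint h(m^n)$ or $\iint[h'(m^n)]_\theta^2$ from $I(\mu^n)\le M$ alone. To get such bounds you would have to derive an a priori energy estimate from the weak PDE (formally multiply by $h'(m^n)$, integrate, and Young's inequality gives $\int_S h(m^n(T))+\tfrac14\iint [h'(m^n)]_\theta^2 \le \int_S h(m_0^n)+\|u^n\|_2^2\le R(\pi^n\|\pi_0)+\|u^n\|_2^2$), but justifying this multiplication at the available regularity is nontrivial and is not supplied by Lemmas \ref{existence} or \ref{continu}. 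Second, the Aubin--Lions/monotonicity step identifying $\lim h'(m^n)=h'(m)$ is precisely the kind of ``superexponential/replacement-lemma''-level work the paper is designed to avoid; leaning on ``the machinery developed there'' does not work because Lemmas \ref{existence} and \ref{continu} concern existence/uniqueness/Lipschitz-in-$u$ continuity of solutions, not strong compactness of an arbitrary family of weak solutions. So while your strategy is defensible in outline, the hard steps it waves off are exactly what the paper's diagonal-approximation-through-controlled-processes argument replaces.
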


\subsection{Completing the Proof of Theorem \ref{MainResult}}
\label{Proofofmain}

In order to prove the first statement  of Theorem \ref{MainResult}, namely $I$ is a rate function on $\Om$, it suffices in view of Lemma \ref{ALM} to show that for every $M<\infty$, there exists
a $l \in \NN$ such that $\Gamma_M \doteq \{\mu \in \Om: I(\mu)\le M\} \subset \Gamma_{l,M}$. We argue via contradiction. Suppose  that there exists $M<\infty$ such that for every $l\in \NN$ there exists $\mu^l\notin \Omega^l$ such that $I(\mu^l)\leq M.$ From the lower semi-continuity of total variation it follows that
 $\Omega^l$ is is closed in $\Omega^{l'}$ for all $l' \ge l$. 
Thus, $(\Omega^l)^c$ is open in $\Om$.  \cite[Lemma 6.1]{guopapvar} shows that there exist  $C_1,C_2, l_0 \in (0,\infty)$ such that 
\begin{equation}
\label{GPVest}\mathbb{P}( \mu^N\notin \Omega^l) \leq C_1 e^{-C_2Nl} \mbox{ for all } l\ge l_0 \mbox{ and } N \in \NN.
\end{equation}
In particular \eqref{expdecay} holds with $Z^N$ replaced with $\mu^N$. It now follows from Proposition \ref{union}(a) and Theorem \ref{thm:lapasymp} that
for each $l\in \NN$
\begin{equation}
\liminf_{N\rightarrow\infty}\frac{1}{N}\log\mathbb{P}(\mu^N\in (\Omega^l)^c)\geq -I((\Omega^l)^c) \geq -I(\mu^l) \ge -M. \label{eq:ratefncont}
\end{equation} 
However this contradicts \eqref{GPVest} and therefore the proof that $I$ is a rate function on $\Om$ is complete.  The second part of the theorem
is now immediate from \eqref{GPVest} (which, as noted previously, implies \eqref{expdecay}  with $Z^N$ replaced with $\mu^N$), Proposition \ref{union} and Theorem \ref{thm:lapasymp}.
\hfill \qed

\section{Proof of Theorem \ref{thm:lapasymp}}
\label{sec:lapasymp}
\subsection{Variational Representation}
\label{subsec:varrep}
 The following representation formula for exponential functionals of $F(\mu^N)$ follows from \cite[Proposition 4.1]{budfanwu}. The latter result builds upon ideas in the proof of a similar representation for functionals of a finite dimensional Brownian motion in \cite{boudup}. Let $(\bar \clv, \bar \clf, \bar \PP)$ be a probability space on which we are given  an
$N$-dimensional Brownian motion, which we denote once more as $(B_1, \ldots B_N) = \bfB^N$, and a $\RR^N$-valued random variable $\bar X^N(0)$ independent of $\bfB^N$ and with probability law $\Pi^N$.
Let $\{\bar \clf_t\}$ be any filtration satisfying the usual conditions such that $\bfB^N$ is a $\{\bar \clf_t\}$-Brownian motion and $\bar X^N(0)$ is $\bar \clf_0$ measurable. Let
$\sys_{\Pi^N} \doteq (\bar \clv, \bar \clf, \{\bar \clf_t\}, \bar \PP, \bar X^N(0), \bfB^N)$ and consider the following collection of processes
$$
\cla^N(\sys_{\Pi^N}) \doteq \{\psi: \psi = (\psi_i)_{i=1}^N \mbox{ and each }  \psi_i \mbox{ is a real-valued } \bar \clf_t \mbox{ progressively measurable process}\}.$$
Let $\cla^N_b(\sys_{\Pi^N})$ denote the collection $\psi^N \in \cla^N(\sys_{\Pi^N})$ such that for some $M\in (0,\infty)$,  $\int_0^T |\psi^N(s)|^2 ds \le M$ a.s.
For a $\psi^N \in \cla^N_b(\sys_{\Pi^N})$, let 
$$\bar B^N_i(t) \doteq B_i(t) + \int_0^t \psi^N_i(s) ds, \; t \in [0,T], \; i = 1, \ldots N.$$
Let $\bar X^N(t) \doteq (X^N_i(t))_{i=1}^N$ be the solution to the system of equations defined the same way as (\ref{align:uncontroleq}) but with $\bar{B}_i^{N}(t)$ in place of $B_i(t)$, i.e. 
\begin{align}\label{align:controleq}
d\bar X^{N}_i(t)&\doteq d\bar Z^{N}_i(t) - d\bar Z^{N}_{i+1}(t),\nonumber\\
d\bar Z^{N}_i(t)&\doteq\frac{N^2}{2}\left[\phi'\left(\bar X^{N}_{i-1}(t)\right) - \phi'\left(\bar X^{N}_{i}(t)\right) \right]dt + Nd\bar B_i(t) .
\end{align}
We shall refer to $\bar{X}^{N}$ as the controlled process and to $X^N$ as the uncontrolled process. Let $\bar\mu^N(t)$ denote the signed measure on the unit circle associated with the controlled process $\bar{X}^N_t,$   defined in a manner analogous to (\ref{empmeas}).
Given a probability measure $\Pi^N\in\mathcal{P}(\mathbb{R}^N),$ we consider the  disintegration
\begin{equation*}
\Pi^N(dx) \doteq \Pi_1(dx_1)\Pi_2(dx_2|x_1)\ldots\Pi_N(dx_N|dx_1,\ldots,dx_{N-1}) \doteq \prod_{i=1}^N \bar\Phi_i^N(x,dx_i),
\end{equation*}
and with $\bar X^N(0)$ distributed as $\Pi^N$, we define a family of $\mathcal{P}(\mathbb{R})$-valued random variables by
 \begin{equation}\label{eq:barphini}
\mathbf{\bar\Phi}_i^N(dx) \doteq \bar\Phi_i^N(\bar{X}^N(0),dx). 
\end{equation}
In order to emphasize the initial distribution $\Pi^N$, we will sometimes write the probability measure $\bar \PP$ as $\bar \PP_{\Pi^N}$ and denote the corresponding expectation by
$\bar \EE_{\Pi^N}$.
The following representation is a consequence of \cite[Proposition 4.1]{budfanwu} (see also Lemma 5.1 therein).
\begin{lemma}
	\label{lem:repn}
Let $F: \Om \to \RR$ be a continuous and bounded function.	Then for all $N \in \NN$
\begin{multline}\label{expectvarA}
-\frac{1}{N} \log \mathbb{E} \exp(-NF(\mu^N))\\
= \inf_{\Pi^N, \sys_{\Pi^N}}\inf_{\psi^N \in \cla^N_b(\sys_{\Pi^N})} \bar{\mathbb{E}}_{\Pi^N} \left[\frac{1}{N}\sum_{i=1}^N\left(R(\mathbf{\bar\Phi}_i^N\|\Phi)+\frac{1}{2}\int_0^T|\psi_i^N(s)|^2 ds \right) + F(\bar{\mu}^N)\right],
\end{multline}
where the outer infimum is over all $\Pi^N \in \clp(\RR^N)$ and all systems $\sys_{\Pi^N}$.
\end{lemma}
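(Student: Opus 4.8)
The statement to prove is Lemma~\ref{lem:repn}, a variational (Boué--Dupuis type) representation for $-\frac{1}{N}\log\EE\exp(-NF(\mu^N))$.

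\medskip

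\noindent\textbf{Proof proposal.}

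The plan is to reduce the claimed identity to the already-cited representation in \cite[Proposition 4.1]{budfanwu}, which applies to exponential functionals of a finite-dimensional Brownian motion together with an independent initial random variable, and then to account carefully for the cost of perturbing the initial distribution. First I would set up the canonical picture: the law of $\mu^N$ under the stationary dynamics \eqref{align:uncontroleq} is a measurable functional of the pair $(X^N(0),\bfB^N)$, where $X^N(0)\sim \Phi^N$ and $\bfB^N$ is an independent $N$-dimensional Brownian motion, via the strong solution map of the SDE; call this map $\Theta^N$, so that $\mu^N = G(\Theta^N(X^N(0),\bfB^N))$ for the continuous bounded functional $F = F\circ(\text{signed-measure map})$, which I will just call $F$ on $\Om$. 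Existence and uniqueness of strong solutions of \eqref{align:uncontroleq}, hence measurability of $\Theta^N$, follows from the local Lipschitz property of $\phi'$ together with the a priori moment bounds coming from \eqref{genfun}--\eqref{sigmaass} and the fact that the stationary process has all moments finite; I would invoke this without belaboring it.

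Next I would apply the Boué--Dupuis representation of \cite{budfanwu} in the following form: for a bounded measurable $\Psi$ of $(\xi,\bfB^N)$ with $\xi$ having a fixed reference law $\nu_0$ on $\RR^N$ and $\bfB^N$ an independent Brownian motion,
\[
-\frac1N\log\EE\exp(-N\Psi(\xi,\bfB^N)) = \inf \bar\EE\Big[\frac1N R(\mathbf{\bar\nu}\,\|\,\nu_0) + \frac1{2N}\int_0^T|\psi^N(s)|^2\,ds + \Psi(\bar\xi,\bar B^N)\Big],
\]
where the infimum is over all stochastic bases carrying $\bfB^N$ and an $\bar\clf_0$-measurable $\bar\xi$, over all progressively measurable bounded controls $\psi^N$, $\bar B^N(\cdot)=\bfB^N(\cdot)+\int_0^\cdot\psi^N$, and $\mathbf{\bar\nu}$ denotes the (random, disintegrated) conditional law of $\bar\xi$ as in \eqref{eq:barphini}. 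Here the key point, already embedded in \cite[Lemma~5.1]{budfanwu}, is that the initial randomness is handled by a change of measure whose cost is exactly the relative entropy $R(\mathbf{\bar\nu}\|\nu_0)$, and by the chain rule for relative entropy this equals $\sum_{i=1}^N R(\mathbf{\bar\Phi}_i^N\|\Phi)$ when $\nu_0=\Phi^N=\Phi^{\otimes N}$ and $\mathbf{\bar\nu}$ is disintegrated site by site as in the statement. Applying this with $\Psi=F\circ\Theta^N$ and $\nu_0=\Phi^N$, and observing that under the shifted Brownian motion $\bar B^N$ the solution $\Theta^N(\bar\xi,\bar B^N)$ of the same SDE is precisely the controlled process $\bar X^N$ of \eqref{align:controleq} (again by strong uniqueness, applied pathwise), we get $F(\Theta^N(\bar\xi,\bar B^N))=F(\bar\mu^N)$, which yields \eqref{expectvarA} after identifying $\bar\xi$ with $\bar X^N(0)$ and $\bar\nu$ with $\Pi^N$.

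The step I expect to be the main obstacle is the careful bookkeeping in the reduction to \cite[Proposition~4.1]{budfanwu}: one must check that their representation genuinely covers the joint randomness of an initial condition plus a Brownian motion in the precise disintegrated/entropic form used here (the content of their Lemma~5.1), and that the stochastic-basis quantifier in their statement is rich enough that restricting to bounded controls $\cla^N_b(\sys_{\Pi^N})$ does not change the infimum --- the latter being a standard truncation/approximation argument (mollify an arbitrary finite-cost control, truncate it, and use that relative entropy and the quadratic cost are lower semicontinuous while $F$ is bounded and continuous). A secondary point is measurability of the strong solution map $\Theta^N$ and the pathwise identification of the controlled SDE's solution with $\bar X^N$; this is routine once local Lipschitzness of $\phi'$ and the non-explosion estimates are in hand, but it should be stated explicitly since the whole representation rests on it. I would present the argument as: (i) recall \cite[Proposition~4.1, Lemma~5.1]{budfanwu}; (ii) verify the hypotheses for $\Psi=F\circ\Theta^N$, $\nu_0=\Phi^N$; (iii) apply the entropy chain rule to split $R(\Pi^N\|\Phi^N)$ into the site-wise sum; (iv) identify the shifted-dynamics solution with $\bar X^N$ and conclude.
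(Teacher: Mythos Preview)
Your proposal is correct and takes essentially the same approach as the paper: the paper does not give a self-contained proof but simply records the lemma as a direct consequence of \cite[Proposition~4.1]{budfanwu} together with Lemma~5.1 therein, and your outline spells out exactly the reduction (strong-solution map, chain rule for relative entropy, identification of the shifted dynamics with the controlled process) that underlies that citation.
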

 An examination of the proof of \cite[Proposition 4.1]{budfanwu} and \cite{boudup} (see \cite[Lemma 3.5]{buddup3}) shows that the class of controls on the right side above can be restricted as follows.
For $N \in \NN$, let $\cla^N_s(\sys_{\Pi^N})$ denote the class of  simple adapted processes $\psi^N \in  \cla^N_b(\sys_{\Pi^N})$, namely 
 for each $i = 1, \ldots , N$, $\psi^N_i$ is of the form 
\begin{equation}
\psi^N_i(t)  \doteq \sum_j U_{ij}\mathbb{I}_{(t_{j},t_{j+1}]}(t)\label{eq:defnsimpcont}
\end{equation}
where $0=t_0\leq t_1\leq\ldots\leq t_K=T$ is a partition of $[0,T]$ and $U_{ij}$ is a family of real random variables such that $U_{ij}$ is measurable with respect to 
$\sigma(\{\bfB^N(t):0\leq t\leq t_j, \bar X^N(0)\})$ and for some $C\in (0,\infty)$
\begin{equation}
\label{controlas} \max_{i,j} |U_{ij}| \leq C
\end{equation} 
almost surely. Note that the partition and the constant $C$ are allowed to depend on $N$ and the control $\psi^N$. The following result says that $\cla^N_b(\sys_{\Pi^N})$ in Lemma \ref{lem:repn}
can be replaced by the smaller class $\cla^N_s(\sys_{\Pi^N})$.
\begin{lemma}
	\label{lem:repnmod}
Let $F: \Om \to \RR$ be a continuous and bounded function.	Then for all $N \in \NN$
\begin{multline}\label{expectvar}
-\frac{1}{N} \log \mathbb{E} \exp(-NF(\mu^N))\\
= \inf_{\Pi^N, \sys_{\Pi^N}}\inf_{\psi^N \in \cla^N_s(\sys_{\Pi^N})} \bar{\mathbb{E}}_{\Pi^N} \left[\frac{1}{N}\sum_{i=1}^N\left(R(\mathbf{\bar\Phi}_i^N\|\Phi)+\frac{1}{2}\int_0^T|\psi_i^N(s)|^2 ds \right) + F(\bar{\mu}^N)\right],
\end{multline}
where the outer infimum is over all $\Pi^N \in \clp(\RR^N)$ and all systems $\sys_{\Pi^N}$.
\end{lemma}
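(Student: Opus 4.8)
The plan is to show that in the variational formula of Lemma \ref{lem:repn}, the infimum over the class $\cla^N_b(\sys_{\Pi^N})$ of bounded progressively measurable controls is unchanged if we restrict to the subclass $\cla^N_s(\sys_{\Pi^N})$ of simple (piecewise-constant, uniformly bounded) adapted controls. Since $\cla^N_s \subset \cla^N_b$, the infimum over $\cla^N_s$ is automatically no smaller than the infimum over $\cla^N_b$, so the only thing to prove is that for every $\psi^N \in \cla^N_b(\sys_{\Pi^N})$ and every $\eps > 0$ there is a system $\sys_{\Pi^N}$ (possibly enlarged) and a simple control $\tilde\psi^N \in \cla^N_s$ whose associated cost-plus-$F$ value is within $\eps$ of the one for $\psi^N$. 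Because the outer infimum in \eqref{expectvarA} is over \emph{all} probability spaces and systems $\sys_{\Pi^N}$, there is considerable freedom: we may pass to a convenient canonical space and even work with weak, rather than strong, solutions, which is exactly the point exploited in \cite[Lemma 3.5]{buddup3}.

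The key steps, in order, are as follows. First, fix $\Pi^N$, $\sys_{\Pi^N}$ and $\psi^N \in \cla^N_b(\sys_{\Pi^N})$, so that $\int_0^T |\psi^N(s)|^2\,ds \le M$ a.s. for some $M$. The quantity to control is
\begin{equation*}
\bar{\EE}_{\Pi^N}\!\left[\tfrac{1}{N}\sum_{i=1}^N\!\Big(R(\mathbf{\bar\Phi}_i^N\|\Phi)+\tfrac12\!\int_0^T\!|\psi_i^N(s)|^2\,ds\Big)+F(\bar\mu^N)\right],
\end{equation*}
and we observe that the relative-entropy terms $R(\mathbf{\bar\Phi}_i^N\|\Phi)$ depend only on $\Pi^N$ and $\bar X^N(0)$, not on the control, so they are untouched by any modification of $\psi^N$; only the control cost $\tfrac12\bar{\EE}\int_0^T|\psi^N|^2$ and $\bar{\EE}F(\bar\mu^N)$ need to be matched. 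Second, following the standard truncation/mollification argument for variational representations (cf.\ \cite{boudup}, \cite{budfanwu}), approximate $\psi^N$ in $L^2(\bar\PP \otimes dt)$ by controls that are adapted and bounded by a fixed constant and piecewise constant in time: one first truncates $\psi^N$ to be bounded by a (large, nonrandom) constant $C$ using the uniform $L^2$ bound $M$, then discretizes in time by conditional averaging over the grid points of a mesh $0=t_0<\dots<t_K=T$, obtaining $U_{ij} := \frac{1}{t_{j+1}-t_j}\bar{\EE}\big[\int_{t_j}^{t_{j+1}}\psi^N_i(s)\,ds \,\big|\, \bar\clf_{t_j}\big]$, which is $\sigma(\{\bfB^N(t): t\le t_j\}, \bar X^N(0))$-measurable after possibly enlarging the filtration appropriately and is bounded by $C$; as the mesh is refined and $C \to \infty$ this produces a sequence $\tilde\psi^{N,k} \in \cla^N_s$ with $\bar{\EE}\int_0^T|\tilde\psi^{N,k}(s) - \psi^N(s)|^2\,ds \to 0$. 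Third, transfer this $L^2$-convergence of controls to convergence of the controlled processes and hence of $F(\bar\mu^N)$: by Girsanov's theorem the law of $(\bar X^N, \bfB^N)$ under the control $\psi^N$ is absolutely continuous with respect to the uncontrolled law with an explicit exponential density, and $L^2$-convergence of the shifts yields convergence in probability (along a subsequence, a.s.) of the controlled trajectories $\bar X^{N,k} \to \bar X^N$ in $C([0,T]:\RR^N)$, which in turn gives $\bar\mu^{N,k} \to \bar\mu^N$ in $\Om$; since $F$ is bounded and continuous, $\bar{\EE}F(\bar\mu^{N,k}) \to \bar{\EE}F(\bar\mu^N)$ by bounded convergence, and $\tfrac12\bar{\EE}\int_0^T|\tilde\psi^{N,k}|^2 \to \tfrac12\bar{\EE}\int_0^T|\psi^N|^2$ from the $L^2$ convergence. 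Combining, the cost-plus-$F$ value along $\tilde\psi^{N,k}$ converges to that along $\psi^N$, which gives the reverse inequality and hence \eqref{expectvar}.

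The main obstacle I expect is \emph{adaptedness of the discretized control together with the measurability constraint \eqref{controlas}}: the conditional-averaging construction must produce $U_{ij}$ that is measurable with respect to $\sigma(\{\bfB^N(t):0\le t\le t_j\}, \bar X^N(0))$ \emph{exactly}, not merely with respect to $\bar\clf_{t_j}$, and the filtration $\{\bar\clf_t\}$ in a general system may be strictly larger than the Brownian filtration. This is precisely where the freedom to change the system $\sys_{\Pi^N}$ in the outer infimum is essential: one appeals to the weak-solution/equivalence-of-representations machinery (as in \cite[Lemma 3.5]{buddup3}, \cite{budfanwu}) to move, without changing the value of the variational problem, to a canonical setup in which any bounded progressively measurable control can be approximated by simple controls of the required form. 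Once one grants that reduction — which is essentially quoted from the cited works — the remaining steps are the routine truncation, time-discretization, and Girsanov/continuity arguments sketched above. I would therefore organize the proof around first citing the reduction to the canonical system, then carrying out the explicit simple-control approximation there.
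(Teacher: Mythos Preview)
Your proposal is correct and follows essentially the same approach as the paper: the paper does not give an independent proof of this lemma but simply asserts, just before its statement, that ``an examination of the proof of \cite[Proposition 4.1]{budfanwu} and \cite{boudup} (see \cite[Lemma 3.5]{buddup3}) shows that the class of controls on the right side above can be restricted'' to simple controls. Your sketch fleshes out exactly this standard reduction (truncation, time-discretization, Girsanov-based continuity), and you correctly flag the adaptedness/measurability issue and its resolution via the freedom to change systems in the outer infimum, which is precisely the role of \cite[Lemma 3.5]{buddup3}.
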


\subsection{The Laplace Upper Bound}
In this section we will prove the inequality 
\begin{equation}\label{eq:lapuppmain}
	\limsup_{N\rightarrow\infty}-\frac{1}{N} \log \mathbb{E} \exp(-NF(\mu^N)) \ge \inf_{\mu \in \Omega} \left[I(\mu)+ F(\mu)\right]
\end{equation}
for all bounded and continuous $F: \Om \to \RR$,
where $I$ is defined by \eqref{eq:defnratefn}. This inequality, together with the complementary inequality given in Section \ref{sec:laplb} will complete the proof of
Theorem \ref{thm:lapasymp}. We begin with some key bounds on certain relative entropies and Dirichlet forms.

\subsubsection{Bounds on relative entropy and an associated Dirichlet form}\label{s2}
In this section we present two technical lemmas. 
Lemma \ref{entlemma} tells us that if the relative entropy $H_N(0)$ of the initial measure $\Pi^N$ with respect to $\Phi^N$ grows linearly with $N$, then so does the relative entropy $H_N(t)$ between the law of the controlled process $\bar X^N(t)$ and that of the uncontrolled (stationary) process $X^N(t)$ at time $t$, for suitable collection of controls. This lemma will be key in proving tightness of the  signed measure valued processes $\bar\mu^N$ as well as in characterizing the subsequential hydrodynamic limits of these controlled processes.
\begin{lemma}\label{entlemma}
	Let $\Pi^N \in \clp(\RR^N)$ for each $N\in \NN$. Consider a sequence of controls $\{\psi^N\}_{N\in \NN}$ such that $\psi^N \in  \cla^N_s(\sys_{\Pi^N})$, for some system $\sys_{\Pi^N}$, for each $N$. Suppose that
	for some $C_0 \in (0,\infty)$
	\begin{equation}
		\label{Hzeroeq}
		\sup_{N\in \NN} \frac{1}{N} \sum_{i=1}^N \int_0^T |\psi^N_i(s)|^2 ds \le C_0, \; \; 	\sup_{N\in \NN} \frac{1}{N}H_N(0) \doteq \frac{1}{N} R(\Pi^N \| \Phi^N) \le C_0.
	\end{equation}
	Denote the controlled process associated with the controls $\psi^N$ and initial distribution $\Pi^N$ as $\bar X^N$ and
let for $t\in [0,T]$, $\bar{\mathbb{Q}}_{\Pi^N}(t)$ denote the law of the controlled random variable $\bar X^N(t)$. Then, there exists  $C_T\in(0,\infty)$ such that for every $t \in [0,T]$,
\begin{equation}
\label{initentest}H_N(t) \doteq R(\bar{\mathbb{Q}}_{\Pi^N}(t) \| \Phi^N) \leq C_TN \text{ for all } N \in \NN.
\end{equation}
\end{lemma}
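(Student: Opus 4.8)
The natural approach is to write down an evolution equation (an entropy inequality) for $t \mapsto H_N(t)$ and apply Gronwall's lemma. Let $q^N(t,\cdot)$ denote the density of $\bar{\mathbb{Q}}_{\Pi^N}(t)$ with respect to $\Phi^N$ (its existence and regularity being guaranteed by the regularity lemma, Lemma~\ref{reglemma}, which controls the density of the controlled process). Then $H_N(t) = \int q^N \log q^N \, d\Phi^N$, and using the forward equation (Fokker--Planck) for the controlled diffusion \eqref{align:controleq} together with It\^o's formula applied to $q^N \log q^N$ (this is the step that invokes Lemma~\ref{reglemma} to justify the computation), one obtains an identity of the schematic form
\begin{equation*}
\frac{d}{dt} H_N(t) = -\frac{N^2}{2} D_N(q^N(t)) + N^2 \sum_{i=1}^N \bar{\mathbb{E}}\!\left[ \psi_i^N(t) \cdot (\text{gradient term involving } V_i \log q^N) \right],
\end{equation*}
where $D_N(q) \doteq \sum_{i=1}^N \int \frac{(V_i q)^2}{q} \, d\Phi^N$ is the Dirichlet form associated with $\mathcal{L}^N$. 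The first term is the dissipative (good) term coming from the symmetric part of the generator; the second is the (potentially bad) term produced by the control $\psi^N$.

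The key manipulation is to absorb the control term into the Dirichlet form using Young's inequality: for each $i$, write the cross term as a product of $V_i \sqrt{q^N}$ (up to factors) against $\psi_i^N \sqrt{q^N}$ and apply $ab \le \frac{\epsilon}{2}a^2 + \frac{1}{2\epsilon}b^2$ with $\epsilon$ chosen so that half of the $-\frac{N^2}{2} D_N$ term is available to dominate the $\epsilon$-part. This leaves a remainder of the form $C N^2 \sum_{i=1}^N \bar{\mathbb{E}}[|\psi_i^N(t)|^2 q^N]$... but here one must be careful: the factor $N^2$ is too large to integrate directly against the assumed bound \eqref{Hzeroeq}, which controls $\frac{1}{N}\sum_i \int_0^T |\psi_i^N|^2\,ds$ with no extra $N$-powers. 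The resolution is that the spatial scaling in the empirical measure means $V_i$ carries a factor $\sim 1/N$ in the relevant hydrodynamic scaling, so the cross term is genuinely of order $N \cdot (\text{bounded})$ rather than $N^2$; alternatively/equivalently, one separates the control term so that only one power of $N$ multiplies the $L^2$-norm of $\psi^N$ and the remaining power is consumed by the Dirichlet term. After this, integrating from $0$ to $t$ gives
\begin{equation*}
H_N(t) + \frac{N^2}{4}\int_0^t D_N(q^N(s))\,ds \le H_N(0) + \kappa N \sum_{i=1}^N \int_0^T |\psi_i^N(s)|^2\,ds \le (C_0 + \kappa C_0) N,
\end{equation*}
using both hypotheses in \eqref{Hzeroeq}. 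Taking $C_T = (1+\kappa)C_0$ yields \eqref{initentest}.

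\emph{Main obstacle.} The delicate point is making the formal entropy-dissipation identity rigorous — that is, justifying the application of It\^o's formula to $q^N \log q^N$ along the controlled process. This requires knowing that $q^N(t,\cdot)$ is smooth enough and bounded away from $0$ and $\infty$ on compact sets, and that the various integration-by-parts steps (which move $V_i$ off $\psi_i^N \sqrt{q^N}$ and onto $V_i q^N$) are valid despite the unbounded coefficients $\phi'$ and the unbounded state space; this is exactly the content of Lemma~\ref{reglemma}, and the bookkeeping needed to verify its hypotheses apply to the simple controls in $\cla^N_s(\sys_{\Pi^N})$ is where the real work lies. A secondary subtlety is tracking powers of $N$ carefully through the scaling so that the control contribution ends up linear rather than quadratic in $N$; the fact that $\psi^N$ enters $\bar{B}^N$ (and hence the drift of $\bar X^N$) with an $N$ prefactor via $Nd\bar B_i$ in \eqref{align:controleq} must be reconciled against the $N^2$ generator scaling so that the final bound matches the hypothesis \eqref{Hzeroeq}.
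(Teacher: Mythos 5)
Your route is genuinely different from the paper's, and the paper's is substantially more elementary. The paper never touches the Fokker--Planck equation or It\^o's formula for this lemma: it works entirely on path space. Let $\bar{\QQ}_{\Pi^N}$ and $\QQ_{\Phi^N}$ be the path laws of $\bar X^N$ and $X^N$ on $C([0,T]:\RR^N)$. By the chain rule for relative entropies, $R(\bar{\QQ}_{\Pi^N}\|\QQ_{\Phi^N}) = R(\Pi^N\|\Phi^N) + \int R(\bar{\QQ}_x\|\QQ_x)\Pi^N(dx)$. The first term is $\le C_0 N$ by hypothesis, and the second is controlled by Girsanov: the relative entropy between the controlled and uncontrolled path laws with fixed start is $\bar\EE_{\Pi^N}\bigl(\frac12\sum_i\int_0^T\psi_i^2\,ds\bigr)\le\frac12 C_0 N$. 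Then $H_N(t)=R(\bar{\QQ}_{\Pi^N}\circ(\om(t))^{-1}\|\QQ_{\Phi^N}\circ(\om(t))^{-1})\le R(\bar{\QQ}_{\Pi^N}\|\QQ_{\Phi^N})\le\frac32 C_0 N$ by the data-processing inequality (monotonicity of relative entropy under marginalization). Three lines, no regularity of densities needed, no Gronwall. Your entropy-production/Dirichlet-form argument is in spirit what the paper does later for Lemma~\ref{Ilemma} (the Dirichlet form bound), and indeed that proof \emph{uses} Lemma~\ref{entlemma} as an input (via \eqref{align:absen}), so the paper deliberately decouples the two and proves the entropy bound by the cheaper mechanism.

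Beyond the strategic difference, there is a concrete bookkeeping error in your displayed inequality. From $\frac1N\sum_i\int_0^T|\psi_i^N|^2\,ds\le C_0$ you get $\sum_i\int_0^T|\psi_i^N|^2\,ds\le C_0 N$, so the term $\kappa N\sum_i\int_0^T|\psi_i^N|^2\,ds$ you wrote is $\le\kappa C_0 N^2$, not $\le\kappa C_0 N$. With that extra $N$ the argument proves only $H_N(t)=O(N^2)$, which is useless here. The correct remainder after Young should carry \emph{no} extra factor of $N$: the controlled drift enters at order $N$ (as $N\sum_i\psi_{i+1}V_i$ in $\L^\psi$), one power below the gradient drift's $N^2$, so after integration by parts the cross term is $N\sum_i\psi_{i+1}\int V_iq^N\,d\Phi^N$; Cauchy--Schwarz/Young against $\frac{N^2}{2}I_N(q^N)$ then leaves $\kappa\sum_i|\psi_i^N(t)|^2$, and integrating in time gives $\kappa\sum_i\int_0^T|\psi_i^N|^2\,ds\le\kappa C_0 N$, restoring the $O(N)$ bound. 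Your heuristic that ``$V_i$ carries a factor $\sim1/N$ in the hydrodynamic scaling'' is not the reason; the correct source of the missing factor is simply the order-$N$ (not $N^2$) prefactor on the control in the drift. Finally, note that Lemma~\ref{reglemma} gives smoothness of the density but not a lower bound; a rigorous version of your argument would also need the $\epsilon$-regularization $\bar p_N+\epsilon$ and a localization $\eta_n$ exactly as in the paper's proof of Lemma~\ref{Ilemma}, which is nontrivial additional work you would be taking on unnecessarily.
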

For any function $f$ on $\mathbb{R}^N$ that is continuously differentiable along the vector fields $V_1,\dots,V_N$, we define the Dirichlet form
$$
D_N(f) \doteq \sum_{i=1}^N\int_{\mathbb{R}^N} (V_if(x))^2\Phi^N(dx)
$$
If, in addition,  $f$ is positive, we define $I_N(f)$, given in terms of the Dirichlet form of the square root of $f$,  as follows:
\begin{equation*}
I_N(f) = 4D_N(\sqrt{f})=\sum_{i=1}^N\int_{\mathbb{R}^N} \frac{(V_if(x))^2}{f(x)}\Phi^N(dx).
\end{equation*}
Lemma \ref{Ilemma} gives an upper bound on $I_N(f)$.
\begin{lemma}\label{Ilemma}
Let $\Pi^N$, $\psi^N$, $\bar X^N$ be as in Lemma \ref{entlemma}. Then for each $t \in [0,T]$ and $N\in \NN$, $\bar X^N(t)$ 
has a density $\{\bar p_N(t,x) : x \in \mathbb{R}^N\}$ with respect to $\Phi^N$   which is continuously differentiable along the vector fields $V_1,\dots,V_N$ and satisfies the following 
bound
for some  $C\in (0,\infty)$:
\begin{equation}
\label{INest} I_N\left(\frac{1}{T}\int_0^T \bar{p}_N(s,\cdot)ds\right) \leq \frac{C}{N} \text{ for all } N \ge 1.
\end{equation}
\end{lemma}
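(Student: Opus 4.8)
The plan is to prove \eqref{INest} by an entropy--dissipation (relative entropy production) estimate in the spirit of \cite{guopapvar}, but for the controlled process. Since $H_N(t)=R(\bar{\mathbb{Q}}_{\Pi^N}(t)\|\Phi^N)<\infty$ for every $t\in[0,T]$ by Lemma \ref{entlemma}, the law $\bar{\mathbb{Q}}_{\Pi^N}(t)$ of $\bar X^N(t)$ is absolutely continuous with respect to $\Phi^N$ and hence has a density $\bar p_N(t,\cdot)$; the asserted regularity of $\bar p_N(t,\cdot)$ (continuous differentiability along $V_1,\dots,V_N$, together with enough joint regularity in $(t,x)$ to run the computation below) is exactly the content of Lemma \ref{reglemma}, which I would invoke. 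The crux is to compute $\tfrac{d}{dt}H_N(t)$. Since $\bar X^N(t)$ has law $\bar p_N(t,\cdot)\,\Phi^N$, one has $H_N(t)=\bar{\mathbb{E}}[\log\bar p_N(t,\bar X^N(t))]$. Applying It\^o's formula to the semimartingale $t\mapsto\log\bar p_N(t,\bar X^N(t))$ (note that the controlled system \eqref{align:controleq} has generator $\mathcal{L}^N$ perturbed by the drift $N(\psi^N_i(t)-\psi^N_{i+1}(t))$ on the $i$-th coordinate), then taking expectations and using $\int\partial_t\bar p_N\,d\Phi^N=\tfrac{d}{dt}\!\int\bar p_N\,d\Phi^N=0$, the self-adjointness of $\mathcal{L}^N$ on $L^2(\Phi^N)$ together with $\mathcal{L}^N=-\tfrac{N^2}{2}\sum_i V_i^\ast V_i$, the discrete summation-by-parts identity $\sum_i(a_i-a_{i+1})\partial_i=-\sum_i a_{i+1}V_i$, and conditioning the control drift on $\bar X^N(t)$ (write $v^N_i(t,x)\doteq\bar{\mathbb{E}}[\psi^N_i(t)\mid\bar X^N(t)=x]$), one arrives at
\begin{equation*}
\frac{d}{dt}H_N(t)=-\frac{N^2}{2}\,I_N\!\left(\bar p_N(t,\cdot)\right)-N\sum_{i=1}^N\int v^N_{i+1}(t,x)\,V_i\bar p_N(t,x)\,\Phi^N(dx).
\end{equation*}

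To bound the cross term I would write $V_i\bar p_N=2\sqrt{\bar p_N}\,V_i\sqrt{\bar p_N}$, apply Cauchy--Schwarz, and use $\sum_i\int(V_i\sqrt{\bar p_N})^2\,d\Phi^N=D_N(\sqrt{\bar p_N})=\tfrac14 I_N(\bar p_N)$, to see that its absolute value is at most $N\,A_N(t)^{1/2}I_N(\bar p_N(t,\cdot))^{1/2}$, where $A_N(t)\doteq\sum_i\int(v^N_{i+1}(t,x))^2\bar p_N(t,x)\,\Phi^N(dx)=\sum_i\bar{\mathbb{E}}[(v^N_{i+1}(t,\bar X^N(t)))^2]$. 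Young's inequality then gives $\tfrac{d}{dt}H_N(t)\le-\tfrac{N^2}{4}I_N(\bar p_N(t,\cdot))+A_N(t)$, and conditional Jensen together with the cyclic symmetry yields $A_N(t)\le\sum_i\bar{\mathbb{E}}[(\psi^N_i(t))^2]$. Integrating over $[0,T]$, discarding $H_N(T)\ge0$, and invoking the two a priori bounds in \eqref{Hzeroeq} (which give $H_N(0)\le C_0N$ and, upon taking expectations, $\int_0^T\sum_i\bar{\mathbb{E}}[(\psi^N_i(t))^2]\,dt\le C_0N$) gives $\int_0^T I_N(\bar p_N(t,\cdot))\,dt\le 8C_0/N$.

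To conclude I would use that $f\mapsto I_N(f)$ is convex on positive densities --- immediate from the joint convexity of $(a,b)\mapsto a^2/b$ on $\mathbb{R}\times(0,\infty)$ and the linearity of $f\mapsto(V_i f,f)$ --- so that Jensen's inequality gives
\begin{equation*}
I_N\!\left(\frac1T\int_0^T\bar p_N(s,\cdot)\,ds\right)\le\frac1T\int_0^T I_N(\bar p_N(s,\cdot))\,ds\le\frac{C}{N},
\end{equation*}
which is \eqref{INest}.

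The main obstacle is making the computation of $\tfrac{d}{dt}H_N(t)$ rigorous: applying It\^o's formula to $\log\bar p_N(t,\bar X^N(t))$ and carrying out the ensuing integrations by parts is delicate because $\bar p_N$ need not be bounded away from $0$ (so $\log\bar p_N$ and $1/\bar p_N$ are singular on $\{\bar p_N=0\}$) and the coefficients $\phi',\phi''$ are unbounded; this is precisely why the regularity statement of Lemma \ref{reglemma} is needed, and one must combine it with a truncation/approximation scheme (for instance, replacing $\bar p_N$ by $\bar p_N+\delta$ and/or localizing to an exhausting sequence of bounded regions, and then passing to the limit) in which the error terms are absorbed using $\sup_{0\le t\le T}H_N(t)<\infty$ from Lemma \ref{entlemma} and the a priori control $\int_0^T\sum_i\bar{\mathbb{E}}[(\psi^N_i(t))^2]\,dt<\infty$. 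One also has to check that the conditional-expectation drift $v^N$ is well defined and that the conditioning step above is legitimate, which follows from the boundedness of the simple controls in $\cla^N_s(\sys_{\Pi^N})$ together with the martingale problem for $\bar X^N$.
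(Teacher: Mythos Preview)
Your plan is correct and matches the paper's proof: an entropy-dissipation computation via It\^o's formula applied to $\log\bar p_N(t,\bar X^N(t))$, the cross term controlled by Cauchy--Schwarz against $\sum_i\int_0^T|\psi_i^N|^2$, and the passage to the time average via convexity of $I_N$. The paper carries out precisely the $\epsilon$-regularization ($\bar p_N\mapsto\bar p_N+\epsilon$) and spatial cutoff ($\eta_n$) you flag as the main obstacle, and additionally first reduces to bounded initial densities (using an auxiliary $L^2$ bound, Lemma \ref{ltwo}, to justify certain dominated-convergence steps) before removing that restriction by lower semicontinuity of $I_N$; it also bounds the cross term directly inside $\bar{\mathbb{E}}_{\Pi^N}$ without your conditioning $v^N_i(t,x)=\bar{\mathbb{E}}[\psi^N_i(t)\mid\bar X^N(t)=x]$, which is harmless but unnecessary.
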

Lemmas \ref{entlemma} and \ref{Ilemma} provide the key technical estimates in proving that subsequential hydrodynamic limits of controlled processes are weak solutions of \eqref{ratepde} via the `block estimate method' of \cite{guopapvar}. More precisely,
these two lemmas will allow us to apply \cite[Theorem 4.1]{guopapvar}, which will be the main ingredient in the proof of part (v) of Theorem \ref{HL} stated in Section \ref{subHL} below (see proof of \eqref{block}). Lemmas \ref{entlemma} and \ref{Ilemma} will be proved in Section \ref{entsection}.
\subsubsection{Tightness results}\label{tightness}
In this section, we collect several lemmas that provide certain tightness properties and characterizations of limit points.
Lemma \ref{mubartight} establishes the tightness of the controlled processes $\{\bar\mu^N \}$ in $\Omega$ for a suitable class of controls.
\begin{lemma}\label{mubartight}
Suppose that $\Pi^N$, $\psi^N$, $\bar X^N$ are as in Lemma \ref{entlemma}. Then the associated sequence of controlled signed measure valued processes $\{\bar \mu^N\}$ is a tight sequence of $\Om$-valued random variables.
\end{lemma}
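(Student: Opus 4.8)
The plan is to follow the classical scheme for tightness of empirical measure processes from \cite{guopapvar}, adapted to the controlled dynamics. The first step is a localisation: I claim $\lim_{l\to\infty}\sup_N\bar\PP_{\Pi^N}(\bar\mu^N\notin\Om^l)=0$. Let $\bar\QQ_{\Pi^N}$ and $\QQ_{\Phi^N}$ be the laws on $C([0,T]:\RR^N)$ of the controlled and of the stationary uncontrolled processes. Girsanov's theorem, the chain rule for relative entropy, the data-processing inequality, and \eqref{Hzeroeq} give $R(\bar\QQ_{\Pi^N}\|\QQ_{\Phi^N})\le R(\Pi^N\|\Phi^N)+\tfrac12\bar\EE_{\Pi^N}\sum_{i=1}^N\int_0^T|\psi_i^N(s)|^2\,ds\le\kappa N$. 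Combining this with $\PP(\mu^N\notin\Om^l)\le C_1e^{-C_2Nl}$ from \cite[Lemma 6.1]{guopapvar} and the entropy inequality $\gamma(A)\le(\log 2+R(\gamma\|\theta))/\log(1/\theta(A))$, applied with $A=\{\mu^N\notin\Om^l\}$, gives $\sup_N\bar\PP_{\Pi^N}(\bar\mu^N\notin\Om^l)\le(\log 2+\kappa N)/(C_2Nl-\log C_1)\to0$. Granting this, a standard argument (cf. \cite{guopapvar}) reduces the tightness of $\{\bar\mu^N\}$ in $\Om$ to showing that for every trigonometric polynomial $J$ on $S$, the process $t\mapsto\langle\bar\mu^N(t),J\rangle=\tfrac1N\sum_{i=1}^N J(i/N)\bar X_i^N(t)$ is tight in $C([0,T]:\RR)$ — the weak compactness of $\clm_S^l$ supplies the compact containment.

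From \eqref{align:controleq}, discrete summation by parts, and $d\bar B_i(s)=dB_i(s)+\psi_i^N(s)\,ds$, I get, with $\Delta^N_iJ:=N^2[J(\tfrac{i+1}{N})-2J(\tfrac iN)+J(\tfrac{i-1}{N})]$ and $\nabla^N_iJ:=N[J(\tfrac iN)-J(\tfrac{i-1}{N})]$,
\begin{equation*}
\langle\bar\mu^N(t),J\rangle-\langle\bar\mu^N(0),J\rangle=\frac12\int_0^t\frac1N\sum_{i=1}^N\phi'(\bar X_i^N(s))\,\Delta^N_iJ\,ds+\int_0^t\frac1N\sum_{i=1}^N\nabla^N_iJ\,\psi_i^N(s)\,ds+M^N_J(t),
\end{equation*}
where $M^N_J(t)=\tfrac1N\sum_i\nabla^N_iJ\,B_i(t)$ is a martingale. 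Three of the four pieces are routine. The initial term is tight because $\bar\EE_{\Pi^N}[\tfrac1N\sum_i|\bar X_i^N(0)|]$ is bounded uniformly in $N$ — by the entropy inequality, \eqref{genfun} (which yields $\int e^{\gamma|x|}\Phi(dx)<\infty$), and the bound on $H_N(0)$ in \eqref{Hzeroeq}. The martingale satisfies $\langle M^N_J\rangle_t\le t\|J'\|_\infty^2/N$, so $\bar\EE_{\Pi^N}[\sup_{t\le T}|M^N_J(t)|^2]\to0$ by Doob's inequality. For the control term, Cauchy--Schwarz over $i$ and then over $[s,t]$, together with $\max_i|\nabla^N_iJ|\le\|J'\|_\infty$ and \eqref{Hzeroeq}, give the deterministic bound $\left|\int_s^t\tfrac1N\sum_i\nabla^N_iJ\,\psi_i^N(r)\,dr\right|\le\|J'\|_\infty\sqrt{C_0\,|t-s|}$, so this family is uniformly $\tfrac12$-H\"older and relatively compact in $C([0,T]:\RR)$.

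The substantive contribution is the drift $A^N_J(t):=\tfrac12\int_0^t\tfrac1N\sum_i\phi'(\bar X_i^N(s))\Delta^N_iJ\,ds$; since $\max_i|\Delta^N_iJ|\le 2\|J''\|_\infty$ for large $N$, it suffices to control the scalar field $g^N(s):=\tfrac1N\sum_{i=1}^N|\phi'(\bar X_i^N(s))|$. Here I would invoke Lemma \ref{entlemma}, which gives $H_N(s)=R(\bar\QQ_{\Pi^N}(s)\|\Phi^N)\le C_TN$ uniformly in $s$. Since \eqref{sigmaass} provides finite exponential moments of $|\phi'|$ of every order under $\Phi$, a diagonal argument produces a convex, superlinear $\Theta:[0,\infty)\to[0,\infty)$ with $\int_\RR e^{\Theta(|\phi'(x)|)}\Phi(dx)<\infty$. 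The entropy inequality with exponent $\gamma=N$ and Jensen's inequality then give, uniformly in $s$ and $N$,
\begin{equation*}
\bar\EE_{\Pi^N}\bigl[\Theta(g^N(s))\bigr]\le\bar\EE_{\Pi^N}\Bigl[\frac1N\sum_{i=1}^N\Theta\bigl(|\phi'(\bar X_i^N(s))|\bigr)\Bigr]\le\log\int_\RR e^{\Theta(|\phi'(x)|)}\Phi(dx)+\frac{H_N(s)}{N}\le\kappa.
\end{equation*}
By de la Vall\'ee-Poussin the family $\{g^N(s):s\in[0,T],\,N\in\NN\}$ is uniformly integrable for $ds\otimes\bar\PP_{\Pi^N}$, uniformly in $N$, so $\lim_{A\to\infty}\sup_N\bar\EE_{\Pi^N}\bigl[\int_0^T g^N(s)\,\Identity_{\{g^N(s)>A\}}\,ds\bigr]=0$. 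The decomposition $\bar\EE_{\Pi^N}\bigl[\int_\tau^{\tau+\delta}g^N(s)\,ds\bigr]\le A\delta+\bar\EE_{\Pi^N}\bigl[\int_0^T g^N(s)\Identity_{\{g^N(s)>A\}}\,ds\bigr]$, valid for any stopping time $\tau\le T$, then verifies the Aldous condition, so $\{A^N_J\}$ — hence $\{\langle\bar\mu^N(\cdot),J\rangle\}$ — is tight in $C([0,T]:\RR)$; together with the localisation this finishes the proof.

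Given Lemma \ref{entlemma}, I expect the only genuine difficulty to be this treatment of the $\phi'$-drift $A^N_J$: producing the right-scale (i.e.\ $O(1)$ rather than $O(N)$) bound on moments of $g^N(s)$ and, above all, upgrading it to uniform integrability. The entropy budget is only of order $N$, which is just enough to control per-site averages at order one but leaves no slack, and it is precisely in extracting the small-time (Aldous) decay from this that the structural hypotheses \eqref{genfun}--\eqref{sigmaass} are genuinely used; the remaining three terms are immediate from \eqref{Hzeroeq}, \eqref{genfun} and elementary estimates.
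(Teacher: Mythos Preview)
Your proof is correct and follows the same two-step scheme as the paper: a localisation estimate \eqref{tightness1} via the entropy inequality combined with \cite[Lemma 6.1]{guopapvar}, followed by an equicontinuity estimate \eqref{tightness2} for $\langle\bar\mu^N(\cdot),J\rangle$ obtained from the It\^o decomposition into drift, control, and martingale parts. The control and martingale terms are handled identically.

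There are two places where your argument differs from the paper's. For the localisation, the paper takes a two-step route: it first uses Cauchy--Schwarz on the Girsanov density to get exponential decay of $\bar\QQ_{\Phi^N}(\mu^N\notin\Om^l)$ (controlled process with equilibrium initial law), and then transfers to $\bar\QQ_{\Pi^N}$ via the entropy bound $R(\bar\QQ_{\Pi^N}\|\bar\QQ_{\Phi^N})=R(\Pi^N\|\Phi^N)$. Your one-step argument, going directly from $\bar\QQ_{\Pi^N}$ to $\QQ_{\Phi^N}$ with $R(\bar\QQ_{\Pi^N}\|\QQ_{\Phi^N})\le\kappa N$, is more direct and equally valid. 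For the $\phi'$-drift, the paper uses an explicit truncation $\phi'_l$ and the entropy inequality with a free parameter $\gamma$: the bound $\bar\EE_{\Pi^N}[\tfrac1N\sum_i|\phi'(\bar X_i)-\phi'_l(\bar X_i)|]\le\gamma^{-1}\log\EE_\Phi[e^{\gamma|\phi'|}\Identity_{|\phi'|>l}+1]+\gamma^{-1}C_T$, followed by $l\to\infty$ then $\gamma\to\infty$. Your de la Vall\'ee--Poussin route packages the same information (finiteness of all exponential moments of $|\phi'|$ under $\Phi$) into a single superlinear $\Theta$ and obtains uniform integrability of $g^N(s)$ directly. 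The two arguments are equivalent in content; yours is more conceptual, the paper's avoids the diagonal construction of $\Theta$.

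One remark: invoking the Aldous condition gives tightness in $D([0,T]:\RR)$, whereas you want $C$-tightness. This is not a gap --- your bound $\int_\tau^{\tau+\delta}g^N\le A\delta+\int_0^T g^N\Identity_{\{g^N>A\}}$ uses nothing about $\tau$ being a stopping time and therefore controls $\sup_{|t-s|\le\delta}|A^N_J(t)-A^N_J(s)|$ pathwise, which is exactly the modulus-of-continuity estimate \eqref{tightness2} the paper proves.
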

Lemma \ref{mubartight} will be proved in Section \ref{proofmubartight}.\\\\

For $N\in \NN$, fix $\Pi^N \in \clp(\RR^N)$ and let $\bar X^N(0)$ be a $\RR^N$-valued random variable with distribution $\Pi^N$. Let $\mathbf{\bar\Phi}_i^N$ be $\clp(\RR)$-valued random variables as defined
in \eqref{eq:barphini}. Define
a collection of  $\mathcal{P}(\mathbb{R}\times S)$-valued random variables  by
\begin{equation}
\nu^N_i(dx d\theta) \doteq \mathbf{\bar\Phi}_i^N(dx) \delta_{i/N}(d\theta), \; i = 1, \ldots N, \; N \in \NN \label{nudef}
\end{equation}
and let
 $\nu^N (dx  d\theta) = \frac{1}{N}\sum_{i=1}^N\nu_i^N(dx  d\theta).$
Also consider a related random probability measure on $\mathbb{R}\times S$ given by
\begin{equation}\label{eq:lndxdt}
\bar{L}^N(dxd\theta) \doteq\frac{1}{N} \sum_{i=1}^N\delta_{\bar{X}_i^{N}(0)}(dx)\delta_{i/N}(d\theta).
\end{equation}
The following lemmas establish tightness of $\{\bar{L}^N,\nu^N\}$ and also characterize the subsequential limits.
\begin{lemma}\label{lbartight} 
Let $\mathbf{\bar\Phi}_i^N$, $\bar L^N$ and $\nu^N$	be as above.
	Suppose  
that for some $C\in (0,\infty)$
\begin{equation}
\label{Rbound}\mathbb{E} \left(\frac{1}{N}\sum_{i=1}^N R(\mathbf{\bar\Phi}_i^N\|\Phi)\right) \leq C.
\end{equation}
 Then $\{\bar{L}^N,\nu^N\}$ is a tight collection of $(\clp(\RR\times S))^2$-valued random variables.\end{lemma}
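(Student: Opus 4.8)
The plan is to establish tightness of each component separately and then conclude joint tightness, using the fact that the ambient spaces $\mathcal{P}(\RR\times S)$ are Polish under the weak topology, so it suffices to verify that each sequence of laws is tight on $\mathcal{P}(\mathcal{P}(\RR\times S))$; by a standard criterion this reduces to showing that the two sequences of random measures are individually tight and that a suitable uniform-integrability/tightness of mass holds. Since $S$ is compact, the only issue is tightness in the $x$-variable; the key tool is the entropy bound \eqref{Rbound} together with the exponential-moment hypothesis \eqref{genfun} on $\Phi$, exploited through the standard entropy inequality $\lan \gamma, f\ran \le R(\gamma\|\Phi) + \log\int e^{f}\,d\Phi$.

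First I would treat $\nu^N = \frac1N\sum_i \mathbf{\bar\Phi}_i^N(dx)\delta_{i/N}(d\theta)$. To show tightness of $\{\nu^N\}$ in $\mathcal{P}(\mathcal{P}(\RR\times S))$, it is enough (again because $S$ is compact) to produce, for each $\eta>0$, a compact $K_\eta\subset\RR$ such that $\EE[\nu^N(K_\eta^c\times S)]$ is uniformly small. Writing $\nu^N(K_\eta^c\times S) = \frac1N\sum_i \mathbf{\bar\Phi}_i^N(K_\eta^c)$ and applying the entropy inequality with $f = \beta\,\mathbb{I}_{K_\eta^c}$ for a large constant $\beta$, one gets $\mathbf{\bar\Phi}_i^N(K_\eta^c) \le \frac{1}{\beta}\big(R(\mathbf{\bar\Phi}_i^N\|\Phi) + \log(1 + \Phi(K_\eta^c)(e^\beta-1))\big)$. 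Choosing $K_\eta = [-R_\eta,R_\eta]$ with $R_\eta$ large enough that $\Phi(K_\eta^c)e^\beta$ is small and then averaging over $i$ and taking expectations, the hypothesis \eqref{Rbound} gives $\EE[\nu^N(K_\eta^c\times S)] \le C/\beta + (\text{small})$, which can be made $<\eta$ by first fixing $\beta$ large, then $R_\eta$ large. This gives tightness of $\{\nu^N\}$. I would also record, by the same device, that $\EE\lan\nu^N,|x|\ran$ is bounded (take $f(x) = |x|/\sigma$ and use \eqref{sigmaass}-type or \eqref{genfun}-type finiteness of $\int e^{|x|/\sigma}\,d\Phi$), which will be needed when one later identifies the limit as an element of $\mathcal{P}_*(\RR\times S)$; this is not strictly required for the present lemma but is harmless to note.

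Next I would treat $\bar L^N = \frac1N\sum_i \delta_{\bar X_i^N(0)}(dx)\delta_{i/N}(d\theta)$. The point is that, conditionally on $\bar X^N(0)$, the measure $\mathbf{\bar\Phi}_i^N$ is exactly the conditional law of $\bar X_i^N(0)$ given the coordinates $x_1,\dots,x_{i-1}$ under $\Pi^N$; hence by the chain rule for relative entropy, $R(\Pi^N\|\Phi^N) = \EE\big[\sum_i R(\mathbf{\bar\Phi}_i^N\|\Phi)\big]$, so \eqref{Rbound} is equivalent to $\frac1N R(\Pi^N\|\Phi^N)\le C$. Then for any bounded measurable $g$ on $\RR$, $\EE\lan\bar L^N, g\otimes 1\ran = \frac1N\sum_i \EE[g(\bar X_i^N(0))] = \frac1N\sum_i \bar\EE[\lan\mathbf{\bar\Phi}_i^N, g\ran] = \EE\lan\nu^N, g\otimes 1\ran$, so the first marginal of $\bar L^N$ has the same mean measure as that of $\nu^N$; applying the same entropy-inequality estimate as above (now to the single measure $\Pi^N$ relative to $\Phi^N$ via the components $\mathbf{\bar\Phi}_i^N$) yields $\EE[\bar L^N(K_\eta^c\times S)] = \EE[\nu^N(K_\eta^c\times S)]$, which we already controlled. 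Hence $\{\bar L^N\}$ is tight by the same compact-set argument.

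Finally, joint tightness of $\{(\bar L^N,\nu^N)\}$ in $(\mathcal{P}(\RR\times S))^2$ follows because tightness of each coordinate implies tightness of the pair in a product of Polish spaces (a relatively compact set in each factor gives a relatively compact product set, and Prokhorov's theorem applies to $\mathcal{P}((\mathcal{P}(\RR\times S))^2)$). I expect the only genuinely delicate point to be the careful bookkeeping in the entropy-inequality step — namely choosing the order of quantifiers ($\beta$ first, then the compact set $K_\eta$) so that the $C/\beta$ term and the tail term are simultaneously small — and verifying that the required exponential integrability $\int_\RR e^{\beta\mathbb{I}_{K^c}}\,d\Phi<\infty$ is trivial here (it is, since $\Phi$ is a probability measure and the integrand is bounded). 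Everything else is standard weak-convergence machinery, and I would keep those parts brief. One should also note the mild measurability/disintegration check that $\theta\mapsto \pi_1(dx|\theta)$ is well-defined for limit points, but that belongs to the subsequent lemmas, not this one.
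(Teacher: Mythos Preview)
Your proposal is correct and follows essentially the same strategy as the paper: control the $x$-mass outside compacta via the entropy bound \eqref{Rbound} (you do this hands-on with the Donsker--Varadhan inequality and indicator test functions, while the paper bounds $R(\bar{\EE}\nu^N\|\Phi\times\lambda_N)$ by Jensen and then cites a standard tightness lemma), and then observe that $\bar L^N$ and $\nu^N$ have the same mean measure on $\RR\times S$, which transfers tightness from one to the other. The only cosmetic difference is packaging; the core idea is identical.
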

\begin{lemma}\label{limitssame}
	Let $\mathbf{\bar\Phi}_i^N$, $\bar L^N$ and $\nu^N$	be as in Lemma \ref{lbartight}.
	Suppose $(\bar{L}^N,\nu^N)$ converge in distribution to $(\bar{L},\nu)$ along some subsequence. Then $\bar{L}=\nu$ with probability 1. Furthermore, the second marginal of $\bar{L}$ is equal to $\lambda$, the Lebesgue measure on $S$.
\end{lemma}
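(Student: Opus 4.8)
The plan is to exploit the fact that $\mathbf{\bar\Phi}_i^N$ is, by \eqref{eq:barphini}, a measurable function of $\bar X^N(0)$, namely the $i$-th conditional of the disintegration of $\Pi^N$ evaluated at $\bar X^N(0)$, so that its ``barycentre'' or any bounded-test-function average against $\nu^N$ should, after averaging over $i$, be close to the corresponding empirical average against $\bar L^N$. Concretely, I would first record that for a bounded continuous $f$ on $\RR\times S$,
\begin{equation*}
\lan \nu^N, f\ran - \lan \bar L^N, f\ran = \frac1N\sum_{i=1}^N\left( \int_\RR f(x,i/N)\,\mathbf{\bar\Phi}_i^N(dx) - f(\bar X^N_i(0), i/N)\right).
\end{equation*}
The key observation is that, \emph{conditionally on} $\sigma(\bar X^N_1(0),\dots,\bar X^N_{i-1}(0))$, the law of $\bar X^N_i(0)$ is exactly $\mathbf{\bar\Phi}_i^N = \bar\Phi_i^N(\bar X^N(0),\cdot)$ (note this conditional only depends on $x_1,\dots,x_{i-1}$), so each summand has conditional mean zero. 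Hence the sum is a martingale in $i$ (after dividing by $N$), with increments bounded by $2\|f\|_\infty/N$, and by the $L^2$ martingale (orthogonality of increments) estimate its second moment is $O(1/N)$. Therefore $\lan \nu^N,f\ran - \lan\bar L^N,f\ran \to 0$ in $L^2$, and in particular in probability, for each fixed bounded continuous $f$.

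Next I would combine this with the assumed joint convergence $(\bar L^N,\nu^N)\Rightarrow(\bar L,\nu)$ along the subsequence: since $\RR\times S$ is Polish, weak convergence on $\clp(\RR\times S)$ is metrized by a countable family of test-function pairings $\lan\cdot,f_k\ran$ with $\{f_k\}$ a suitable convergence-determining class of bounded continuous functions, so the pair $(\lan\bar L^N,f_k\ran,\lan\nu^N,f_k\ran)_k$ converges in distribution to $(\lan\bar L,f_k\ran,\lan\nu,f_k\ran)_k$ while the difference $\lan\nu^N,f_k\ran-\lan\bar L^N,f_k\ran\to 0$ in probability for each $k$. By the usual Slutsky/converging-together argument this forces $\lan\bar L,f_k\ran = \lan\nu,f_k\ran$ almost surely for every $k$, and hence (intersecting the countably many null sets and using that $\{f_k\}$ determines measures) $\bar L = \nu$ almost surely. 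For the statement about the second marginal, I would test against functions depending only on $\theta$: for $g\in C(S)$, $\lan\bar L^N, 1\otimes g\ran = \frac1N\sum_{i=1}^N g(i/N) \to \int_S g\,d\lambda$ deterministically, by convergence of Riemann sums, so passing to the limit gives that the second marginal of $\bar L$ (equivalently of $\nu$) is $\lambda$ almost surely.

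The only mildly delicate point is the martingale structure: one must be careful that the natural filtration in $i$ to use is $\clg_i = \sigma(\bar X^N_1(0),\dots,\bar X^N_i(0))$ and check that $\mathbf{\bar\Phi}_i^N$ is $\clg_{i-1}$-measurable (which is exactly the content of the disintegration $\Pi_i(dx_i\mid x_1,\dots,x_{i-1})$ in the definition preceding \eqref{eq:barphini}), so that $\EE[f(\bar X^N_i(0),i/N)\mid\clg_{i-1}] = \int f(x,i/N)\,\mathbf{\bar\Phi}_i^N(dx)$; granting this, the rest is routine. I do not expect to need the entropy bound \eqref{Rbound} for this lemma beyond what Lemma \ref{lbartight} already uses to guarantee the limit $(\bar L,\nu)$ exists and is a genuine pair of probability measures; the identification $\bar L=\nu$ and the marginal computation are soft consequences of the martingale estimate and deterministic Riemann-sum convergence.
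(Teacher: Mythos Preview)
Your proposal is correct and follows essentially the same route as the paper: the paper also sets $\Delta_i^N = f(\bar X_i^N(0),i/N) - \int f(x,i/N)\,\mathbf{\bar\Phi}_i^N(dx)$, observes $\bar\EE_{\Pi^N}(\Delta_i^N\mid\clg_{i-1})=0$ so that the cross terms vanish, and applies Chebyshev to get $\bar\PP_{\Pi^N}(|\lan\nu^N,f\ran-\lan\bar L^N,f\ran|>\epsilon)\le 4\|f\|_\infty^2/(N\epsilon^2)$; the second-marginal claim is handled identically via the deterministic convergence of $\frac1N\sum_i\delta_{i/N}$ to $\lambda$. Your added remarks about the convergence-determining class and Slutsky make explicit what the paper leaves implicit, but there is no substantive difference.
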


\begin{lemma}
\label{UnifInt}
Let $\mathbf{\bar\Phi}_i^N$, $\bar L^N$ and $\nu^N$	be as in Lemma \ref{lbartight}.
Suppose that  $\bar{L}^N$ converges in distribution to  $\bar L$ along a subsequence. Then,
\begin{equation}
\label{rightlimit}\int_S\int_{\mathbb{R}} |x|\bar{L}(dxd\theta) < \infty, \mbox{ a.s. }
\end{equation}
 Furthermore,  $\bar\mu^N(0,d\theta) =  \int_{\mathbb{R}}x\bar{L}^N(dx d\theta)$ converges in distribution in $\clm_S$ to some limit $\bar\mu(0,d\theta)$ along the same subsequence, and
 \begin{equation}
\label{integrablelimit}\bar\mu(0,d\theta) =  \int_{\mathbb{R}}x\bar{L}(dx d\theta),\; a.s.
\end{equation}
\end{lemma}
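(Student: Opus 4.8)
The plan is to deduce all three assertions from a single input: the coordinate $x$ is \emph{uniformly integrable} under the random measures $\bar L^N$ (equivalently $\nu^N$), as a consequence of the entropy bound \eqref{Rbound} together with the fact that, by \eqref{genfun}, $\Phi$ has exponential moments of every order, i.e. $\int_{\RR}e^{\lambda|x|}\Phi(dx)\le M(\lambda)+M(-\lambda)<\infty$ for all $\lambda\ge 0$, with $\int_{\{|x|>R\}}e^{\lambda|x|}\Phi(dx)\to 0$ as $R\to\infty$ for each fixed $\lambda$. The main tool is the Donsker--Varadhan entropy inequality: for $\gamma\in\clp(\RR)$ and bounded measurable $g$, $\int g\,d\gamma\le R(\gamma\|\Phi)+\log\int e^{g}\,d\Phi$. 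Applying this with $g=\lambda|x|\mathbb{I}_{\{|x|>R\}}$ yields, for every $\lambda>0$ and $R\ge 0$,
\begin{equation*}
\int_{\RR}|x|\,\mathbb{I}_{\{|x|>R\}}\,\gamma(dx)\le\frac{1}{\lambda}\left(R(\gamma\|\Phi)+\log\Big(1+\int_{\{|x|>R\}}e^{\lambda|x|}\Phi(dx)\Big)\right).
\end{equation*}

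First I would apply this bound with $\gamma=\mathbf{\bar\Phi}_i^N$, average over $i$, and use \eqref{Rbound} to get $\mathbb{E}\int|x|\,\mathbb{I}_{\{|x|>R\}}\,\nu^N(dx\,d\theta)\le C/\lambda+\lambda^{-1}\log\big(1+\int_{\{|x|>R\}}e^{\lambda|x|}\Phi(dx)\big)$. Since $\mathbf{\bar\Phi}_i^N$ is a regular conditional law of $\bar X_i^N(0)$ given the earlier coordinates (recall the disintegration of $\Pi^N$ preceding \eqref{eq:barphini}), the tower property gives $\mathbb{E}\int g(x)\,\bar L^N(dx\,d\theta)=\mathbb{E}\int g(x)\,\nu^N(dx\,d\theta)$ for every nonnegative measurable $g$, so the same bound holds with $\bar L^N$ in place of $\nu^N$. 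Letting $R\to\infty$ and then $\lambda\to\infty$ yields $\lim_{R\to\infty}\sup_N\mathbb{E}\int|x|\,\mathbb{I}_{\{|x|>R\}}\,\bar L^N(dx\,d\theta)=0$, and the case $R=0$ (with $\lambda$ fixed) gives $\sup_N\mathbb{E}\int|x|\,\bar L^N(dx\,d\theta)<\infty$.

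For \eqref{rightlimit}: for each fixed $K$ the map $\gamma\mapsto\int(|x|\wedge K)\,\gamma(dx\,d\theta)$ is bounded and continuous on $\clp(\RR\times S)$, so $\mathbb{E}\int(|x|\wedge K)\,\bar L(dx\,d\theta)=\lim_N\mathbb{E}\int(|x|\wedge K)\,\bar L^N(dx\,d\theta)\le\sup_N\mathbb{E}\int|x|\,\bar L^N(dx\,d\theta)<\infty$; letting $K\uparrow\infty$ and using monotone convergence gives $\mathbb{E}\int|x|\,\bar L(dx\,d\theta)<\infty$, hence a.s. finiteness. For \eqref{integrablelimit}: the bound $\mathbb{E}\|\bar\mu^N(0)\|_{TV}=\mathbb{E}\int|x|\,\bar L^N(dx\,d\theta)\le\kappa$ and Markov's inequality make $\{\bar\mu^N(0)\}$ tight in $\clm_S$; along any subsequence realizing a joint limit $(\bar L^N,\bar\mu^N(0))\Rightarrow(\bar L,\tilde\mu)$, fix $f\in C(S)$, write $\lan\bar\mu^N(0),f\ran=\int f(\theta)x\,\bar L^N(dx\,d\theta)$, and split $x$ into its truncation at level $K$ plus a remainder. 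The truncated part is a bounded continuous functional of $\bar L^N$ and converges to the corresponding functional of $\bar L$; the remainder is at most $\|f\|_\infty\int|x|\,\mathbb{I}_{\{|x|>K\}}\,\bar L^N(dx\,d\theta)$, which by the previous paragraph is $L^1$-small uniformly in $N$ once $K$ is large. Letting $N\to\infty$ and then $K\to\infty$ (using \eqref{rightlimit} to pass the truncation to the limit in $\bar L$) gives $\lan\tilde\mu,f\ran=\int f(\theta)x\,\bar L(dx\,d\theta)$ a.s.; running this over a countable convergence-determining family in $C(S)$ identifies $\tilde\mu=\int_{\RR}x\,\bar L(dx\,d\theta)=:\bar\mu(0)$ a.s. Since the limit is the same along every subsequence, $\bar\mu^N(0)\Rightarrow\bar\mu(0)$, which is \eqref{integrablelimit}. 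Alternatively one can invoke the Skorokhod representation theorem, replace $\bar L^N\to\bar L$ in distribution by a.s.\ convergence, and carry out the same truncation estimate in probability.

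I expect the only real work to be in the second paragraph: turning the relative-entropy bound \eqref{Rbound} into tail control of $x$ under $\bar L^N$ via the entropy inequality and the all-order exponential moments of $\Phi$. The remaining steps (a Fatou-type argument for \eqref{rightlimit}, a truncation argument for \eqref{integrablelimit}) are routine, the only subtlety being to keep track of the direct-limit topology on $\clm_S$ when stating the final convergence.
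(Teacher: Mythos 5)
Your proof is correct and follows the same route as the paper's: convert the entropy bound \eqref{Rbound} together with the all-order exponential moments of $\Phi$ into uniform integrability of $x$ under the random empirical laws, then dispatch \eqref{rightlimit} and \eqref{integrablelimit} by truncation. The two cosmetic differences are that the paper applies the pointwise Young-type inequality $ab \le e^{\sigma a}+\tfrac{1}{\sigma}(b\log b - b + 1)$ where you invoke its integrated (Donsker--Varadhan) form, and that the paper obtains tightness of $\{\bar\mu^N(0)\}$ by citing Lemma \ref{mubartight}, whereas you get it directly from $\sup_N\mathbb{E}\|\bar\mu^N(0)\|_{TV}<\infty$; your version is slightly cleaner since Lemma \ref{mubartight} formally requires the control bound in \eqref{Hzeroeq}, which is not among the stated hypotheses of Lemma \ref{UnifInt} (though only the entropy part of that lemma is actually used for the time-zero marginal).
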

\begin{lemma}\label{convtopistar} 
Let $\pi^* \in \clp(\mathbb{R} \times S)$ be such that its second marginal is the Lebesgue measure on $S$. Define
\begin{equation}
\label{muchoice}\bar\Phi_i^N(dx) \doteq N\int_{(i-1)/N}^{i/N}\pi^*_1(dx|\theta)d\theta, \ \ 1 \le i \le N,
\end{equation}
where $\pi^*(dx,d\theta) = \pi_1^*(dx | \theta) d\theta$. Suppose that $R(\pi^*\|\pi_0)<\infty$, where $\pi_0$ was defined in \eqref{pizerodef}.
Let $ \bar X^N(0)\doteq (\bar X_1(0), \dots, \bar X_N(0))$ be a $\RR^N$-valued random variable with distribution 
$$\Pi^N(dx) \doteq \bar\Phi_1^N(dx_1)\ldots \bar\Phi_N^N(dx_N).$$ Then $\bar L^N$ defined by \eqref{eq:lndxdt} converges in probability to $\pi^*$. 
\end{lemma}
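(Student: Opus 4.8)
The plan is to establish the convergence $\bar L^N \to \pi^*$ in probability by combining a tightness/subsequential-limit argument with a law-of-large-numbers computation that identifies the limit uniquely. First I would verify that the hypotheses of Lemma \ref{lbartight} are met: since the random measures $\mathbf{\bar\Phi}_i^N$ here are deterministic (they equal $\bar\Phi_i^N$ as defined by \eqref{muchoice}, not depending on $\bar X^N(0)$), the quantity $\frac{1}{N}\sum_i R(\mathbf{\bar\Phi}_i^N\|\Phi)$ is deterministic, and a standard convexity/Jensen argument on relative entropy, using that $\bar\Phi_i^N$ is the $\theta$-average of $\pi^*_1(\cdot|\theta)$ over $[(i-1)/N,i/N]$, gives $\frac{1}{N}\sum_i R(\bar\Phi_i^N\|\Phi) \le R(\pi^*\|\pi_0)<\infty$. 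Hence \eqref{Rbound} holds and $\{\bar L^N\}$ is tight (we only need the $\bar L^N$ component here). Since $\bar L^N \to \pi^*$ in probability is equivalent to: along every subsequence there is a further subsequence converging in probability to $\pi^*$, and convergence in distribution to a constant implies convergence in probability, it suffices to show that every subsequential distributional limit of $\bar L^N$ equals the deterministic measure $\pi^*$.

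Next I would identify the limit. Fix a bounded continuous $g$ on $\RR\times S$ (it suffices to test against a convergence-determining countable family, but for the sketch I take general bounded continuous $g$, or rather $g$ of product-ish form $g(x,\theta)$ that is, say, continuous and bounded). Then
\[
\langle \bar L^N, g\rangle = \frac{1}{N}\sum_{i=1}^N g(\bar X_i^N(0), i/N),
\]
and since the $\bar X_i^N(0)$ are independent with $\bar X_i^N(0)\sim \bar\Phi_i^N$, we compute
\[
\EE \langle \bar L^N, g\rangle = \frac{1}{N}\sum_{i=1}^N \int_\RR g(x,i/N)\,\bar\Phi_i^N(dx) = \frac{1}{N}\sum_{i=1}^N N\int_{(i-1)/N}^{i/N}\!\!\int_\RR g(x,i/N)\,\pi^*_1(dx|\theta)\,d\theta.
\]
Replacing $g(x,i/N)$ by $g(x,\theta)$ inside the integral costs an error controlled by the modulus of continuity of $g$ in $\theta$ (uniformly, if $g$ is uniformly continuous — one can first reduce to such $g$, e.g. Lipschitz $g$, since $\clp(\RR\times S)$ is Polish and Lipschitz functions are convergence-determining), so $\EE\langle \bar L^N,g\rangle \to \int_{\RR\times S} g\,d\pi^*$. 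For the variance, independence gives
\[
\mathrm{Var}\big(\langle \bar L^N,g\rangle\big) = \frac{1}{N^2}\sum_{i=1}^N \mathrm{Var}\big(g(\bar X_i^N(0),i/N)\big) \le \frac{\|g\|_\infty^2}{N} \to 0,
\]
so $\langle \bar L^N,g\rangle \to \int g\,d\pi^*$ in $L^2$, hence in probability. Applying this to a countable convergence-determining family of functions and using a diagonal argument, we conclude $\bar L^N \to \pi^*$ in probability in $\clp(\RR\times S)$.

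The only real subtlety — and the step I expect to need the most care — is the tightness of $\{\bar L^N\}$, i.e. uniform tightness of the law of $\bar L^N$ on $\clp(\RR\times S)$, which requires uniform control of the mass near $\infty$ in the $x$-direction; the $S$-direction is automatically compact. This is exactly what the relative-entropy bound $\frac{1}{N}\sum_i R(\bar\Phi_i^N\|\Phi)\le R(\pi^*\|\pi_0)$ buys us via Lemma \ref{lbartight}: a finite-entropy family is uniformly integrable against $\Phi$, and since $\Phi$ has exponential moments by \eqref{genfun}, one gets a uniform tail bound $\sup_N \EE\langle \bar L^N, |x|\wedge K^c\rangle$-type estimate that forces tightness. (Alternatively, since here we also have the explicit $L^2$ convergence of $\langle \bar L^N,g\rangle$ for each bounded continuous $g$, one can bypass invoking Lemma \ref{lbartight} directly and instead argue that $\langle\bar L^N, g\rangle\to \langle\pi^*,g\rangle$ in probability for all $g$ in a convergence-determining class implies both tightness and identification of the limit simultaneously — but the cleanest exposition is to cite Lemma \ref{lbartight} for tightness and then run the $L^2$ computation above to pin the limit.) Finally, I would note that the uniform continuity reduction is harmless: one restricts attention to bounded Lipschitz test functions on $\RR\times S$ (which are convergence-determining on the Polish space $\clp(\RR\times S)$), and for such $g$ the $\theta$-discretization error $|g(x,i/N)-g(x,\theta)|\le \mathrm{Lip}(g)/N$ is uniform, making the mean computation above rigorous.
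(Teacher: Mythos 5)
Your proof is correct and takes essentially the same route as the paper: establish tightness via the entropy bound and Lemma \ref{lbartight}, then identify the limit by a second-moment computation exploiting independence of the $\bar X_i^N(0)$ and the small discretization error in $\theta$. The only cosmetic difference is organizational: you do a clean mean--variance split of $\langle\bar L^N,g\rangle$, while the paper defines centered increments $\Delta_i^N$ and bounds $\EE[(\sum_i\Delta_i^N)^2]$ directly, controlling the cross-terms by the conditional bias $|\EE(\Delta_i^N\mid\clg_{i-1})|\le\delta$ --- these are the same estimate arranged differently, with your version arguably a bit cleaner.
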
    
Proofs of Lemmas \ref{lbartight}, \ref{limitssame}, \ref{UnifInt} and \ref{convtopistar} are quite standard, however for completeness, details are given  in Section \ref{prooflemmas}.

\subsubsection{Characterizing subsequential  limits of controlled processes}\label{subHL}
The following theorem characterizes subsequential hydrodynamic limits of the controlled processes $\{\bar \mu^N\}$ and, in particular, establishes that any subsequential hydrodynamic limit has a density which is a solution to \eqref{ratepde}.  Let for $N \in \NN$, $\psi^N = (\psi^N_1, \ldots \psi^N_N) \in L^2([0,T]:\RR^N)$.  Associated with such a $\psi^N$, define $u_N = u_N(\psi^N) \in L^2([0,T]\times S)$ by 
\begin{equation}\label{eq:psitoun}
u_N(t,\theta) \doteq \sum_{i=1}^N \psi_i^N(t)\mathbb{I}_{((i-1)/N,i/N]}(\theta), \; (t,\theta) \in [0,T]\times S.
\end{equation}
Note that
\begin{equation*}
\int_{[0,T]\times S} |u_N(t,\theta)|^2 dt d\theta =\frac{1}{N}\sum_{i=1}^N \int_0^T |\psi_i^N(t)|^2 dt.
\end{equation*}
In particular if $\{\psi^N\}$ is a sequence as in Lemma \ref{entlemma} satisfying the first bound in \eqref{Hzeroeq}, then the associated sequence $\{u_N\}$, $u_N=u_N(\psi^N)$
takes value in the set
$$
\mathcal{S}_{C_0} \doteq \left\lbrace u \in L^2([0,T] \times S) :  \int_{[0,T]\times S} |u(t,\theta)|^2 d\theta dt \le C_0 \right\rbrace.
$$
Equipped with the topology of weak convergence on the Hilbert space $L^2([0,T] \times S)$, $\mathcal{S}_{C_0}$ is a compact metric space and thus  $\{u_N\}_{N\in \NN}$ regarded as a sequence of
$\mathcal{S}_{C_0}$-valued random variables is automatically tight.
\begin{thm}\label{HL} Suppose that 
 $\Pi^N$, $\psi^N$, $\bar X^N$ are as in Lemma \ref{mubartight} and suppose that along some subsequence $\{\bar{\mu}^N, u_N\}$ converges in distribution to $(\bar\mu, u)$ as $\Omega \times \mathcal{S}_{C_0}$-valued random variables. Then the following hold almost surely.
\begin{itemize}
\item[(i)]  There is a measurable  function $\bar m$ on $[0,T]\times S$ such that for almost every $t\in[0, T]$, $\bar m(t, \cdot) \in L^1(S)$ is the density of 
$\bar{\mu}(t, d\theta)$, namely $\bar{\mu}(t, d\theta) = \bar{m}(t,\theta)d\theta$.
\item[(ii)] $\bar{m}(0, \theta)$ is the density of $\bar{\mu}(0, d\theta)$, namely $\bar{\mu}(0,d\theta) = \bar m(0,\theta)d\theta$.
\item[(iii)]  $ \int_{[0,T]\times S} h(\bar{m}(t, \theta)) dt  d \theta < \infty$ and $ \int_{ S} h(\bar{m}(0, \theta))   d \theta < \infty$.
\item[(iv)]  For a.e. $t\in [0,T]$, the map $\theta \mapsto h'(\bar{m}(t, \theta))$ is weakly differentiable and $$\int_{[0,T]\times S}\left[\partial_{\theta}\left(h'(\bar{m}(t, \theta))\right)\right]^2 dt d \theta  < \infty.$$
\item[(v)] 
$\bar{m}$ is a weak solution to (\ref{ratepde}), i.e. for any smooth $J$ on $S$ and $t\in [0,T]$, \eqref{eq:weakpde} is satisfied with $m$ replaced by $\bar m$, $m_0 = \bar m(0, \cdot)$, and $u$ as above.


\end{itemize}

\end{thm}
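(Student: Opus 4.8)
The plan is to run the hydrodynamic-limit program of \cite{guopapvar} for the controlled diffusion $\bar X^N$, the two enabling a priori inputs being the entropy bound \eqref{initentest} of Lemma \ref{entlemma} and the Dirichlet-form bound \eqref{INest} of Lemma \ref{Ilemma}. First I record the reductions. Since $\Pi^N,\psi^N,\bar X^N$ are as in Lemma \ref{entlemma}, the chain rule for relative entropy gives $\bar{\mathbb{E}}_{\Pi^N}\big[\tfrac1N\sum_{i=1}^N R(\mathbf{\bar\Phi}_i^N\|\Phi)\big]=\tfrac1N R(\Pi^N\|\Phi^N)\le C_0$, so the hypothesis \eqref{Rbound} of Lemmas \ref{lbartight}, \ref{limitssame}, \ref{UnifInt} holds and $\{\bar L^N\}$ is tight. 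Passing to a further subsequence I may assume $(\bar\mu^N,u_N,\bar L^N)$ converges jointly in distribution; since $\{\bar\mu^N\}$ is tight in $\Om$ (Lemma \ref{mubartight}) its limit is supported in a fixed $\Om^l$, so by Lemma \ref{lem:topprops} and the Skorokhod representation theorem I may work on a single probability space on which $(\bar\mu^N,u_N,\bar L^N)\to(\bar\mu,u,\bar L)$ almost surely, and argue pathwise henceforth.

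\emph{Parts (i)--(iv).} Combining the entropy inequality (applied to functions of the form $x\mapsto\delta\sum_i g(x_i)$) with $\sup_t H_N(t)\le C_TN$, and using the integrability of $h$ and $h'$ relative to $\Phi$ that follows from the assumptions \eqref{genfun}--\eqref{sigmaass} on $\phi$, one gets $\sup_N\sup_t\bar{\mathbb{E}}_{\Pi^N}\big[\tfrac1N\sum_i h(\bar X_i^N(t))\big]<\infty$. Mollifying $\bar\mu^N(t,\cdot)$ at a fixed scale $\eps\gg1/N$, applying convexity of $h$ and Fatou, then letting $\eps\downarrow0$ and invoking lower semicontinuity of $\gamma\mapsto\int_S h(\text{density of }\gamma)\,d\theta$ along weak convergence (valid since $h\ge0$ is convex and superlinear), forces $\bar\mu(t,d\theta)=\bar m(t,\theta)\,d\theta$ for a.e.\ $t$ with $\int_{[0,T]\times S}h(\bar m)<\infty$ a.s.; this is (i) and the space--time part of (iii), and a jointly measurable version of $\bar m$ is chosen by disintegration. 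The $t=0$ statements in (ii)--(iii) follow from the same argument at $t=0$ (with $H_N(0)=R(\Pi^N\|\Phi^N)\le C_0N$), while Lemmas \ref{UnifInt} and \ref{limitssame} identify $\bar\mu(0,d\theta)=\int_\RR x\,\bar L(dx\,d\theta)=\bar m(0,\theta)\,d\theta$ (using that $\bar L$ has second marginal the Lebesgue measure), so the initial density is consistent with the one from (i). Part (iv) is the limiting form of \eqref{INest}: writing $I_N(f)=4\sum_i\int(V_i\sqrt f)^2\,d\Phi^N$ for $f=\tfrac1T\int_0^T\bar p_N(s,\cdot)\,ds$, the bound $I_N(f)\le C/N$ passes, by the standard lower-semicontinuity estimate of \cite{guopapvar} (Donsker--Varadhan), to weak differentiability of $\theta\mapsto h'(\bar m(t,\theta))$ for a.e.\ $t$ with $\int_{[0,T]\times S}[\partial_\theta h'(\bar m)]^2\,dt\,d\theta<\infty$.

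\emph{Part (v).} Applying It\^o's formula to $\langle\bar\mu^N(t),J\rangle=\tfrac1N\sum_i J(i/N)\bar X_i^N(t)$ via \eqref{align:controleq} and summing by parts twice on $S$ yields
\begin{align*}
\langle\bar\mu^N(t),J\rangle-\langle\bar\mu^N(0),J\rangle
&=\tfrac12\int_0^t\tfrac1N\sum_i\phi'(\bar X_i^N(s))\,J''(i/N)\,ds\\
&\quad+\int_0^t\tfrac1N\sum_i\psi_i^N(s)\,J'(i/N)\,ds+M_N(t)+o(1),
\end{align*}
where $\langle M_N\rangle_t\le t\sum_i\big(J(i/N)-J((i-1)/N)\big)^2=O(1/N)$, so $M_N\to0$ in probability, and the $o(1)$ collects discretization errors that vanish using (iii) together with $\tfrac1N\sum_i\int_0^T|\psi_i^N|^2\le C_0$ (cf.\ \eqref{Hzeroeq}). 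The control term equals $\int_0^t\int_S\tilde J_N'(\theta)\,u_N(s,\theta)\,d\theta\,ds$ with $\tilde J_N'\to J'$ uniformly, hence converges to $\int_0^t\int_S J'(\theta)u(s,\theta)\,d\theta\,ds$ by the weak $L^2$-convergence $u_N\rightharpoonup u$, while the left side converges by (i)--(ii). The remaining term is not a function of the empirical density; the identity $\int_\RR\phi'(x)\,\Phi_\lambda(dx)=\lambda$ for the tilted measures $\Phi_\lambda(dx)=M(\lambda)^{-1}e^{\lambda x}\Phi(dx)$ (integration by parts, using $\int\Phi=1$) reveals that in local equilibrium $\phi'$ is to be replaced by $h'$ of the local density. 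Making this precise is the \cite{guopapvar} block replacement: Lemmas \ref{entlemma} and \ref{Ilemma} supply exactly the entropy and Dirichlet-form bounds needed to invoke \cite[Theorem 4.1]{guopapvar}, which replaces $\tfrac1N\sum_i\phi'(\bar X_i^N(s))J''(i/N)$ by its value on local spatial averages of $\bar X^N$ and then, letting the block size grow after $N\to\infty$ and using continuity of $h'$ with (iii)--(iv), by $\int_S J''(\theta)h'(\bar m(s,\theta))\,d\theta$. Combining the four limits and using continuity in $t$ of both sides (to go from a.e.\ $t$ to all $t$) gives \eqref{eq:weakpde} with $m=\bar m$, $m_0=\bar m(0,\cdot)$ and $u$ as above, i.e.\ (v).

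The step I expect to be the main obstacle is precisely this last replacement: unlike the stationary setting of \cite{guopapvar}, the one- and two-block estimates must run for the controlled, non-stationary law $\bar{\mathbb{Q}}_{\Pi^N}$, for which the only control available is through the entropy bound $\sup_t H_N(t)\le C_TN$ and the time-averaged Dirichlet-form bound $I_N(\tfrac1T\int_0^T\bar p_N)\le C/N$; verifying that these suffice to apply \cite[Theorem 4.1]{guopapvar} is the technical heart of the argument, and is exactly the role of Lemmas \ref{entlemma} and \ref{Ilemma}, which themselves rest on the density-regularity Lemma \ref{reglemma}.
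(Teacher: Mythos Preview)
Your proposal is correct and follows essentially the same route as the paper: parts (i)--(iv) are read off from the entropy and Dirichlet-form bounds via the \cite{guopapvar} lemmas (Lemmas 6.3 and 6.6 there), and part (v) is the It\^o expansion plus the block-replacement Theorem 4.1 of \cite{guopapvar}, with Lemmas \ref{entlemma} and \ref{Ilemma} placing the time-averaged controlled density in the class $\underline{A}_{N,C_1,C_2}$ that Theorem 4.1 requires. The one point you gloss over that the paper makes explicit is the truncation: since $\phi'$ is unbounded, one first passes to the cutoff $\phi'_l$ (controlled via the entropy bound and assumption \eqref{sigmaass}), applies the block estimate to $\phi'_l$ and its tilted average $\widetilde{\phi'}_l$, and only then sends $l\to\infty$ using $\widetilde{\phi'}_l\to h'$ together with the domination $|\widetilde{\phi'}_l(x)|\le \sigma^{-1}\log\int e^{\sigma|\phi'|}\,d\Phi+\sigma^{-1}h(x)$ and (iii).
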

Theorem \ref{HL} will be proved in Section \ref{ProofHL}.

\subsubsection{Completing the proof of  Laplace upper bound}\label{LUB}
We now complete the proof of the inequality in \eqref{eq:lapuppmain}.
Fix $F$ bounded and continuous on $\Om$ and let $\epsilon \in (0,1).$ Using Lemma \ref{lem:repnmod} we can choose for each $N \in \NN$, $\Pi^N\in\mathcal{P}(\mathbb{R}^N)$, a system $\sys_{\Pi^N}$ and
$\psi^N \in \cla^N_s(\sys_{\Pi^N})$ such that 
\begin{equation}\label{simpleas}
-\frac{1}{N} \log \mathbb{E} \exp(-NF(\mu^N))
\ge \bar{\mathbb{E}}_{\Pi^N} \left[\frac{1}{N}\sum_{i=1}^N\left(R(\mathbf{\bar\Phi}_i^N\|\Phi)+\frac{1}{2}\int_0^T|\psi_i^N(s)|^2 ds \right) + F(\bar{\mu}^N)\right] - \epsilon.
\end{equation}
Since $F$ is bounded, there is a $C \in (0,\infty)$ such that  
$$\sup_{N\in \NN} \bar{\mathbb{E}}_{\Pi^N} \left(\frac{1}{N}\sum_{i=1}^N R(\mathbf{\bar\Phi}_i^N\|\Phi)\right) \le C, \; 
\sup_{N \in \NN} \bar{\mathbb{E}}_{\Pi^N}\left(\frac{1}{2N}\sum_{i=1}^N \int_0^T|\psi_i^N(s)|^2 ds\right) \le C.$$
By a standard localization argument and by choosing a larger $C$ if needed, we can assume without loss of generality that for every $N$
$$\frac{1}{2N}\sum_{i=1}^N\int_0^T|\psi_i^N(s)|^2 ds \le C \;\; a.s.$$
Define $u_N = u_N(\psi^N)$ as in \eqref{eq:psitoun}. 
By the lemmas in Section \ref{tightness}, we may find a common subsequence along which $(u_N,\bar{L}^N, \nu^N, \bar\mu^N)$   converge in distribution, in
$\mathcal{S}_{C}\times (\clp(\RR\times S))^2 \times \Om$  to $(u,\bar{L}, \nu, \bar\mu)$.
Using Fatou's Lemma and the fact that the map $f \mapsto \int_0^T \int_{S} |f(s, \theta)|^2dsd\theta$ is lower semicontinuous on $L^2([0,T] \times S)$ with respect to the weak topology, we have
\begin{equation}\label{Fat}
\liminf_{N\rightarrow\infty} \bar{\mathbb{E}}_{\Pi^N} \int_0^1\int_S |u_N(s,\theta)|^2 dsd\theta \geq \bar{\mathbb{E}} \int_0^1\int_S |u(s,\theta)|^2 dsd\theta. 
\end{equation}
Note that $\bar{L}=\nu$ by Lemma \ref{limitssame} and that
$\bar \mu(0, d\theta) = \int_{\mathbb{R}}x\bar{L}(dx d\theta)$ by Lemma \ref{UnifInt}. 
Furthermore, from Theorem \ref{HL}, $\nu \in \clp_*(\RR \times S)$ and  $\bar\mu\in\mathcal{M}_\infty(u,\nu)$ a.s.

Define the random measure on $\RR\times S$ as
$$
m^N(dx\;d\theta) \doteq \sum_{i=1}^N \mathbf{\bar{\Phi}}_i^N(dx) \mathbb{I}_{(i/N, (i+1)/N]} (\theta)d\theta.
$$
By integrating against uniformly continuous test functions on $\mathbb{R} \times S$, it is clear that $m^N$ converges weakly to the same limit as $\nu^N$, namely $\nu$. Moreover, by the chain rule for relative entropies (see for example \cite[Theorem C.3.1]{dupell4}),
$
\frac{1}{N}\sum_{i=1}^NR(\mathbf{\bar{\Phi}}_i^N\|\Phi) = R(m^N\| \pi_0).
$
Therefore, by the lower semicontinuity of $R(\cdot \| \pi_0)$,
\begin{equation}\label{Rcon}
\liminf_{N \rightarrow \infty} \frac{1}{N}\sum_{i=1}^NR(\mathbf{\bar{\Phi}}_i^N\|\Phi) = \liminf_{N \rightarrow \infty} R(m^N\| \pi_0) \ge R(\nu \| \pi_0).
\end{equation}
Thus, by \eqref{simpleas}, \eqref{Fat}, \eqref{Rcon} and the continuity of $F$, we have
\begin{align*}
&\liminf_{N\rightarrow\infty} -\frac{1}{N}\log \mathbb{E} \exp\left(-NF\left(\mu^N\right)\right) + \epsilon\\
&\geq \liminf_{N\rightarrow\infty}\bar{\mathbb{E}}_{\Pi^N}\left(F\left(\bar\mu^N\right) + \frac{1}{N}\sum_{i=1}^N\left(R(\mathbf{\bar\Phi}_i^N\|\Phi)+\frac{1}{2}\int_0^T|\psi_i^N(s)|^2 ds\right)\right)\\
&= \liminf_{N\rightarrow\infty}\bar{\mathbb{E}}_{\Pi^N}\left(F\left(\bar\mu^N\right) + R(m^N\| \pi_0)+\frac{1}{2}\int_0^T\int_S|u_N(s,\theta)|^2dsd\theta\right)\\
&\geq \bar{\mathbb{E}} \left(F\left(\bar\mu\right) + R(\nu \|\pi_0)+\frac{1}{2}\int_0^T\int_S|u(s,\theta)|^2dsd\theta\right)\\
&\geq \inf_{\mu\in \Omega} \left[F(\mu) + I(\mu)\right],
\end{align*}
where the last inequality uses the fact that $\bar\mu\in\mathcal{M}_\infty(u,\nu)$ a.s.
This completes the proof of \eqref{eq:lapuppmain}. \hfill \qed
\subsection{Laplace Lower Bound}
\label{sec:laplb}
In this section we establish the complementary bound to \eqref{eq:lapuppmain}, namely we show that for every bounded and continuous $F: \Om \to \RR$,
\begin{equation}\label{eq:mainlaplowbd}
\limsup_{N\rightarrow\infty} -\frac{1}{N} \log \mathbb{E} \exp (-NF(\mu^N)) \leq \inf_{{\mu}\in\Omega}\{F({\mu}) + I({\mu})\}.
\end{equation}
The two bounds together will complete the proof of Theorem  \ref{thm:lapasymp}.
We begin with some results on existence and uniqueness of solutions of controlled PDE in \eqref{ratepde}.
\subsubsection{Existence and uniqueness of solutions to (\ref{ratepde})}\label{s1}
In this subsection, we present  two lemmas which establish the existence, uniqueness and continuity of solutions (with respect to $u$) of the ``controlled hydrodynamic limit" equation given in \eqref{ratepde}. The first lemma shows the existence of a solution to (\ref{ratepde}) if $u$ is smooth. The second lemma shows that if if $m_1$ and $m_2$ are solutions to (\ref{ratepde}) with same initial condition and
$u_1$ and $u_2$ in place of $u,$ then the distance between the corresponding elements of $\Omega$ is controlled by the $L^2$-distance between $u_1$ and $u_2.$ In particular, if we choose $u_1=u_2,$ this will imply that any solution to (\ref{ratepde}) is unique (within a suitable class). 
\begin{lemma}\label{existence}
Let $u\in C^\infty([0,T]\times S).$ Then for any $m_0\in L^1(S)$ satisfying $\int_S h\left(m_0(\theta)\right)d\theta < \infty$, there exists a unique solution to the PDE \eqref{ratepde}.
%
Furthermore,  $\mu(t,d\theta) = m(t,\theta)d\theta$, $t\in [0,T]$,   defines an element of $\Om$, the function $\theta\to m(t,\theta)$ is weakly differentiable and satisfies (\ref{hl1}) and (\ref{hl2}).
\end{lemma}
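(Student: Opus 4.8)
The plan is to treat \eqref{ratepde} as a quasilinear parabolic equation on the circle, extract a priori energy bounds from the convexity of $h$, construct a solution by regularization and compactness, and obtain uniqueness from a monotonicity argument in $H^{-1}(S)$. First I would record the structural facts about $h$: since $M(\lambda)<\infty$ for all $\lambda$ (assumption \eqref{genfun}) and $\Phi$ has full support on $\RR$, the function $\rho=\log M$ is $C^\infty$ and strictly convex with $\rho'$ an increasing $C^\infty$ bijection of $\RR$; hence $h=\rho^*$ is $C^\infty$ with $h'=(\rho')^{-1}$ an increasing bijection of $\RR$, $h''>0$ everywhere, $h\ge 0$, and $h$ is superlinear ($h(x)/|x|\to\infty$). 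It is convenient to pass to the variable $v\doteq h'(m)$ (so $m=\rho'(v)$), which turns \eqref{ratepde} into the quasilinear parabolic equation $\rho''(v)\,\partial_t v=\tfrac12\,\partial_{\theta\theta}v-\partial_\theta u$ with coefficient $\rho''$ strictly positive and, on compact $v$-sets, bounded away from $0$ and $\infty$. For the a priori bound, multiplying \eqref{ratepde} by $h'(m)$ and integrating over $S$ gives $\frac{d}{dt}\int_S h(m(t,\cdot))\,d\theta=-\tfrac12\int_S|\partial_\theta h'(m)|^2\,d\theta+\int_S u\,\partial_\theta h'(m)\,d\theta$, so Young's inequality yields, with $\kappa=\kappa\bigl(\int_S h(m_0)\,d\theta,\ \|u\|_{L^2([0,T]\times S)}\bigr)$,
\[
\sup_{t\le T}\int_S h(m(t,\cdot))\,d\theta\ \le\ \kappa\qquad\text{and}\qquad \int_0^T\!\!\int_S|\partial_\theta h'(m)|^2\,d\theta\,dt\ \le\ \kappa .
\]
By superlinearity of $h$ and the de la Vallée-Poussin criterion the first bound gives uniform integrability of $\{m(t,\cdot)\}_{t\le T}$, hence $\sup_{t\le T}\|m(t,\cdot)\|_{L^1(S)}<\infty$ (which, via \eqref{TVbound}, places $\mu(t,d\theta)=m(t,\theta)d\theta$ in $\Om$), and from the PDE $\partial_t m$ is bounded in $L^2([0,T]:H^{-1}(S))$. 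These bounds are quantitative and stable under approximation.

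To construct a solution I would mollify $m_0$ to a smooth $m_0^\eta$ — by convexity of $h$ and Jensen's inequality, $\int_S h(m_0^\eta)\le\int_S h(m_0)$, so $\kappa$ stays uniform — and regularize the equation (add a vanishing-viscosity term $\tfrac{\eta}{2}\,\partial_{\theta\theta}m$, and if needed truncate $\rho''$ away from $0$ and $\infty$) so that classical quasilinear parabolic theory on $S$ with periodic boundary conditions produces smooth solutions $m^\eta$ on $[0,T]\times S$. The estimates above hold uniformly in $\eta$. Extracting a subsequence, $v^\eta=h'(m^\eta)\rightharpoonup v$ in $L^2([0,T]:H^1(S))$; using the uniform $L^2([0,T]:H^{-1}(S))$ bound on $\partial_t m^\eta$ together with the monotonicity of the map $v\mapsto m$ (an Aubin–Lions / compensated-compactness argument, needed because the time derivative sits on $m^\eta$ while the spatial gradient bound is on $v^\eta$), one upgrades to $m^\eta\to m$ strongly in $L^1([0,T]\times S)$ and a.e., which forces $v=h'(m)$ a.e. Passing to the limit in the weak formulation \eqref{eq:weakpde} (the viscosity, truncation and mollification errors vanish) produces a weak solution $m$.

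Weak lower semicontinuity and Fatou's lemma then give \eqref{hl1} (from $\sup_{t}\int_S h(m(t,\cdot))\le\kappa$) and \eqref{hl2} (from the Dirichlet-energy bound). For membership in $\Om$, the $L^1$ bound gives the total-variation bound while $\partial_t m\in L^2([0,T]:H^{-1}(S))$ gives continuity of $t\mapsto\mu(t)$ in $\clm_S$ with the weak topology. For weak differentiability of $\theta\mapsto m(t,\theta)$ for a.e.\ $t$ I would use the one-dimensional embedding $H^1(S)\hookrightarrow C(S)$: for a.e.\ $t$, $h'(m(t,\cdot))\in H^1(S)$ is bounded, so $m(t,\cdot)=\rho'(h'(m(t,\cdot)))$ takes values in a compact set on which $\rho''$ is bounded, whence $\partial_\theta m(t,\cdot)=\rho''(h'(m(t,\cdot)))\,\partial_\theta h'(m(t,\cdot))\in L^2(S)$. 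For uniqueness, if $m_1,m_2$ solve \eqref{ratepde} in this class with the same $u$ and $m_0$, then taking $J\equiv 1$ in \eqref{eq:weakpde} shows $\int_S(m_1-m_2)(t,\cdot)\,d\theta\equiv 0$, so $\Psi(t)\doteq(-\partial_{\theta\theta})^{-1}(m_1-m_2)(t,\cdot)$ is well defined on $S$; testing the difference of the equations against $\Psi$ and using $\partial_t m_i\in L^2([0,T]:H^{-1}(S))$ gives
\[
\tfrac12\,\frac{d}{dt}\,\|m_1-m_2\|_{H^{-1}(S)}^2\ =\ -\tfrac12\int_S\bigl(h'(m_1)-h'(m_2)\bigr)(m_1-m_2)\,d\theta\ \le\ 0
\]
by monotonicity of $h'$, and since the difference vanishes at $t=0$ we conclude $m_1=m_2$.

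The main obstacle is the compactness step. Because $h''=1/\rho''$ may degenerate or blow up, the limit equation need not be uniformly parabolic, so one cannot simply pass to the limit by parabolic estimates; moreover the uniform bounds control $\partial_t m^\eta$ and $\partial_\theta h'(m^\eta)$ in incompatible ``slots'', so extracting strong $L^1$ (or a.e.) convergence of $m^\eta$ and identifying the nonlinear limit $h'(m^\eta)\to h'(m)$ requires the monotone-operator / compensated-compactness argument to be carried out with care, together with verifying that the regularizations of the data and of $\rho''$ do not disturb the $\kappa$-estimates. By contrast the a priori estimates, the regularity claims, and the $H^{-1}$ uniqueness argument are short and use only the convexity of $h$ and the one-dimensionality of $S$.
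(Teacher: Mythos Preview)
Your approach is correct in outline but takes a genuinely different route from the paper's. The paper does not construct the solution by PDE techniques at all: it discretizes $u$ into simple controls $\psi_i^N$, manufactures initial laws $\Pi^N$ from a measure $\pi\in\clp_*(\RR\times S)$ satisfying $\int_\RR x\,\pi_1(dx\mid\theta)=m_0(\theta)$ and $R(\pi_1(\cdot\mid\theta)\|\Phi)=h(m_0(\theta))$ (supplied by \cite[Lemma~6.2.3.g]{dupell4}), checks that $R(\Pi^N\|\Phi^N)\le N\int_S h(m_0)\,d\theta<\infty$, and then invokes Lemma~\ref{mubartight} for tightness of the controlled empirical measures $\bar\mu^N$ and Theorem~\ref{HL} to identify any subsequential limit as a weak solution of \eqref{ratepde} already satisfying \eqref{hl1}--\eqref{hl2} and lying in $\Om$. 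Uniqueness is not argued in the proof of Lemma~\ref{existence} itself but is imported from Lemma~\ref{continu}(i), whose $H^{-1}$ monotonicity computation is essentially the same as your uniqueness argument.

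The trade-off is clear. The paper's route costs nothing extra once Theorem~\ref{HL} is in hand: existence, the bounds \eqref{hl1}--\eqref{hl2}, and membership in $\Om$ all drop out of the hydrodynamic limit, so no separate parabolic existence theory is needed and the lemma becomes a two-paragraph corollary. Your route is a self-contained PDE argument that would decouple the lemma from the stochastic machinery (and your energy identity is exactly the right one), but the price is precisely the step you flag as the obstacle --- identifying the nonlinear limit $h'(m^\eta)\to h'(m)$ when the a priori bounds control $\partial_t m^\eta$ in $H^{-1}$ and $\partial_\theta h'(m^\eta)$ in $L^2$, two mismatched slots. That identification is doable via Minty's device for maximal monotone operators, as for porous-medium or Stefan-type equations, but it is genuine work that the paper simply bypasses by recycling Theorem~\ref{HL}.
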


\begin{lemma}\label{continu}
	\begin{enumerate}[(i)]
\item Suppose that $\mu_1, \mu_2 \in \Omega$ are such that for  $0\leq t\leq T,$ $\mu_i(t,d\theta)$ has a density $m_i(t,\theta)$, namely, $\mu_i(t,d\theta)=m_i(t,\theta)d\theta,$ and that $m_i$ satisfies (\ref{hl1}) and (\ref{hl2})  for $i=1,2.$ 
Let $u_1, u_2 \in L^2([0,T]\times S),$ let $m_0\in L^1(S)$ satisfy $\int_S h \left(m_0(\theta)\right)d\theta < \infty$, and suppose that $m_1$ and $m_2$ are weak solutions to 
\eqref{ratepde} with $u$ replaced with $u_1$ and $u_2$ respectively and initial density $m_0$ as above.
Then $$
d_{*}(\mu_1, \mu_2)= \sup_{0 \le t \le T}d_{BL}(\mu_1(t,\cdot), \mu_2(t,\cdot)) \le e^{T/2} \|u_1-u_2\|_2.
$$
In particular, for any $u\in L^2([0,T]\times S),$ there is at most one $\mu\in\Omega$ with a density $m(t,\cdot)$ for $0\leq t \leq T$ that satisfies (\ref{hl1}), (\ref{hl2}) and solves (\ref{ratepde}) with $m_0$ as above.

\item Suppose $u_n$ is a sequence in $C^{\infty}([0,T]\times S)$ that converges to $u$ in $L^2([0,T]\times S)$. Define $\mu_n \in \Omega$ associated to $u_n$ by $\mu_n(t,d\theta) = m_n(t,\theta)d\theta$ where $m_n$ is the weak solution to \eqref{ratepde} with $u_n$ in place of $u$ and $m_0$ as in part (i). Suppose there exists a weak solution $m$ to \eqref{ratepde} associated with the limiting $u$ and the chosen $m_0$. Define $\mu \in \Omega$ by $\mu(t,d\theta) = m(t,\theta)d\theta$. Then $d_*(\mu_n,\mu)\to 0$  and  the sequence $\{\mu_n\}_{n \ge 1}$ is uniformly bounded in total variation norm, namely
$\sup_{n\in\NN} \sup_{0\le t\le T} \|\mu_n(t)\|_{TV} <\infty$.
In particular, $\mu_n$ converges to $\mu$ in $\Om$.
\end{enumerate}
\end{lemma}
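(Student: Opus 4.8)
The plan is to prove (i) by a homogeneous $H^{-1}$ energy estimate that uses the convexity of $h$ essentially, and then to deduce (ii) from (i) together with a second, independent energy estimate that controls $\int_S h(m_n(t,\cdot))$ uniformly in $n$.

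For part (i), taking $J\equiv 1$ in \eqref{eq:weakpde} shows $\int_S m_i(t,\theta)\,d\theta=\int_S m_0(\theta)\,d\theta$ for $i=1,2$ and all $t$, so $w(t,\cdot)\doteq m_1(t,\cdot)-m_2(t,\cdot)$ has zero mean on the circle $S$ for every $t$, with $w(0,\cdot)=0$. Let $\phi(t,\cdot)$ be the unique mean-zero function with $-\partial_{\theta\theta}\phi(t,\cdot)=w(t,\cdot)$ and set $\|w(t)\|_{-1}^2\doteq\|\partial_\theta\phi(t,\cdot)\|_{L^2(S)}^2=\langle w(t),\phi(t)\rangle$. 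Since \eqref{hl2} gives $\partial_\theta h'(m_i(t,\cdot))\in L^2(S)$ for a.e.\ $t$, one integration by parts turns \eqref{eq:weakpde} into an identity valid for all $J\in H^1(S)$ involving only $\partial_\theta J$; subtracting the identities for $m_1$ and $m_2$ shows $t\mapsto w(t)$ is absolutely continuous into $H^{-1}(S)$ with $\partial_t w=\frac12\partial_{\theta\theta}(h'(m_1)-h'(m_2))-\partial_\theta(u_1-u_2)$ a.e. By the chain rule for such maps (a Lions--Magenes-type lemma),
\[
\frac{d}{dt}\,\frac12\|w(t)\|_{-1}^2=\langle\phi(t),\partial_t w(t)\rangle=-\frac12\langle m_1(t)-m_2(t),\,h'(m_1(t))-h'(m_2(t))\rangle+\langle u_1(t)-u_2(t),\,\partial_\theta\phi(t)\rangle .
\]
Convexity of $h$ makes $h'$ nondecreasing, so the first term is $\le 0$; bounding the second by Cauchy--Schwarz and using $\|\partial_\theta\phi(t)\|_{L^2(S)}=\|w(t)\|_{-1}$ gives $\frac{d}{dt}\|w(t)\|_{-1}\le\|u_1(t)-u_2(t)\|_{L^2(S)}$, whence, since $w(0)=0$, $\|w(t)\|_{-1}\le\int_0^t\|u_1(s)-u_2(s)\|_{L^2(S)}\,ds\le\sqrt{t}\,\|u_1-u_2\|_2$. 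Finally, for $f\in BL_1(S)$, $\langle w(t),f\rangle=\langle\partial_\theta\phi(t),\partial_\theta f\rangle\le\|\partial_\theta\phi(t)\|_{L^2(S)}\,\|\partial_\theta f\|_{L^2(S)}\le\|w(t)\|_{-1}$, since $\|\partial_\theta f\|_{L^2(S)}\le\|f\|_L\,|S|^{1/2}\le 1$; taking the supremum over $f$ and then over $t\le T$ yields $d_*(\mu_1,\mu_2)\le\sqrt T\,\|u_1-u_2\|_2\le e^{T/2}\|u_1-u_2\|_2$. Taking $u_1=u_2$ and recalling that $d_*$ is a metric on each $\Om^l$ gives the stated uniqueness.

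For part (ii), Lemma \ref{existence} gives that each $m_n$ exists and that $\mu_n\in\tilde\Omega$, so $m_n$ satisfies \eqref{hl1}--\eqref{hl2}; by hypothesis $m$ is a weak solution, hence $\mu\in\tilde\Omega$ and $m$ also satisfies \eqref{hl1}--\eqref{hl2}. Part (i) applied to $(m_n,u_n)$ and $(m,u)$ (same initial datum $m_0$) gives $d_*(\mu_n,\mu)\le e^{T/2}\|u_n-u\|_2\to 0$. For the uniform total-variation bound, test the weak equation for $m_n$ against $h'(m_n(t,\cdot))$ — admissible for a.e.\ $t$ by \eqref{hl2} — to obtain $\frac{d}{dt}\int_S h(m_n(t,\theta))\,d\theta=-\frac12\int_S(\partial_\theta h'(m_n))^2\,d\theta+\int_S u_n\,\partial_\theta h'(m_n)\,d\theta\le\frac12\int_S u_n(t,\theta)^2\,d\theta$, hence $\sup_{0\le t\le T}\int_S h(m_n(t,\theta))\,d\theta\le\int_S h(m_0(\theta))\,d\theta+\frac12\|u_n\|_2^2$, which is bounded uniformly in $n$ because $u_n\to u$ in $L^2$. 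Since $h\ge 0$ and $h(x)/|x|\to\infty$ as $|x|\to\infty$ ($h$ being the Legendre transform of the finite convex function $\rho$), a de la Vall\'ee-Poussin argument turns this into $\sup_n\sup_{0\le t\le T}\int_S|m_n(t,\theta)|\,d\theta=:l<\infty$, so $\mu_n\in\Om^{\lceil l\rceil}$ for all $n$ (and $\mu\in\Om^{\lceil l\rceil}$ too). Since $d_*$ metrizes $\Om^{\lceil l\rceil}$ and $d_*(\mu_n,\mu)\to 0$, the definition of the direct limit topology immediately gives $\mu_n\to\mu$ in $\Om$.

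The main obstacle is making the two energy identities rigorous directly from the weak formulation \eqref{eq:weakpde} and the integrability in \eqref{hl1}--\eqref{hl2}: one must check that $t\mapsto\|w(t)\|_{-1}^2$ and $t\mapsto\int_S h(m_n(t,\cdot))\,d\theta$ are absolutely continuous and that substituting $\phi(t)$, respectively $h'(m_n(t,\cdot))$, as a test function is legitimate, which requires a mollification-in-time (Lions--Magenes-type) argument of the kind that is classical in the parabolic PDE literature. The remaining ingredients — mass conservation, the sign of the diffusion term coming from convexity of $h$, the Cauchy--Schwarz bounds, and the de la Vall\'ee-Poussin step — are routine.
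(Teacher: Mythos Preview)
Your proof of part (i) is correct and is essentially the paper's argument in different clothing: the paper works with the primitive $\tilde M_3(t,\theta)=\int_0^\theta (m_1-m_2)(t,y)\,dy$ and its $L^2(S)$ norm, which is exactly your $\|w(t)\|_{-1}=\|\partial_\theta\phi(t)\|_{L^2(S)}$ since $\partial_\theta\phi=-\tilde M_3$. The paper derives $\partial_t\|\tilde M_3\|_2^2\le 2\|\tilde M_3\|_2\|u_1-u_2\|_2$ and then uses a Gronwall step to arrive at the factor $e^{T/2}$; you integrate directly and get the sharper factor $\sqrt T\le e^{T/2}$. Both rely on the same two ingredients: the sign coming from convexity of $h$, and Cauchy--Schwarz on the control term.

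Part (ii) is where the approaches diverge. The paper obtains the uniform total-variation bound by contradiction, going back to the particle system: it constructs the controlled diffusions associated with $(u_n,m_0)$, invokes the tightness result Lemma~\ref{mubartight} (which holds uniformly in $n$ because $\|u_n\|_2$ and the initial relative entropy are uniformly bounded), and argues that unboundedness of $\|\mu_n(t_n)\|_{TV}$ would contradict tightness of $\{\mu_n^{N_n}\}$ in $\Omega$. Your route is a direct PDE entropy estimate: test the equation for $m_n$ against $h'(m_n)$ to obtain $\sup_t\int_S h(m_n(t,\cdot))\le\int_S h(m_0)+\frac12\|u_n\|_2^2$, and then pass to a uniform $L^1$ bound via superlinearity of $h$. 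Your argument is self-contained and avoids the probabilistic machinery entirely; the paper's argument is less elementary but reuses infrastructure (Lemma~\ref{mubartight}) already built for other purposes. The one genuine cost of your approach is the chain rule $\frac{d}{dt}\int_S h(m_n(t,\cdot))=\langle h'(m_n),\partial_t m_n\rangle$, which, as you correctly flag, is not immediate from the weak formulation \eqref{eq:weakpde} with smooth test functions and needs a Steklov-averaging/Lions--Magenes justification; this is standard but should be carried out.
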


Lemmas \ref{existence} and \ref{continu} will be proved in Section \ref{weakuniquesec}.
\subsubsection{Completing the proof of Laplace lower bound}\label{LLB}
The goal of this section is to show the bound in \eqref{eq:mainlaplowbd}
for all bounded and continuous $F$.
We begin with the following lemma.
\begin{lemma}
	\label{lem:smoothapprox}
	Suppose $\pi^* \in \clp_*(\RR\times S)$ such that $R(\pi^*\|\pi_0)<\infty$ and $u \in C^{\infty}([0,T]\times S)$. Define for $i=1, \ldots, N$, $\bar{\Phi}^N_i \in \clp(\RR)$ as in \eqref{muchoice} and $\psi^N_i\in L^2([0,T]:\RR)$ as 
	\begin{equation}\label{psichoice}
	\psi_i^N(t) \doteq \sum_{j=1}^{N}u\left(\frac{jT}{N}, \frac{i}{N}\right) \mathbb{I}_{(jT/N, (j+1)T/N]}(t), \; t \in [0,T].
	\end{equation}
 Define $\Pi^N(dx) = \bar\Phi_1^N(dx_1)\ldots\bar\Phi_N^N(dx_N)$ and $\mathbf{\bar{\Phi}}_i^N\doteq \bar\Phi_i^N$. 
Associated with $\Pi^N$ and $\{\psi^N_i\}$ as above, let $\bar \mu^N$ be defined as in
Section \ref{subsec:varrep}. Then 
\begin{equation}\label{ueq}
\lim_{N\rightarrow\infty} \frac{1}{N}\int_0^T\sum_{i=1}^N|\psi_i^N(t)|^2dt = \int_0^T\int_S |u(t,\theta)|^2 d\theta dt,
\end{equation}
\begin{align}\label{mutight}
\frac{1}{N}\sum_{i=1}^NR(\mathbf{\bar\Phi}_i^N\|\Phi) \le  
R(\pi^*\|\pi_0),\; \mbox{ for all } N \in \NN
\end{align}
and $\bar \mu^N$ converges to $\bar \mu$ in distribution in $\Om$ where $\bar \mu(t, d\theta) = m(t,\theta) d\theta$ and $m$ is the unique weak solution of \eqref{ratepde} with $u$ as above and $m_0(\theta) \doteq \int_{\RR} x \pi_1^*(dx|\theta)$, $\theta \in S$.
\end{lemma}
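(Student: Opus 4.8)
The plan is to establish the three conclusions of Lemma \ref{lem:smoothapprox} essentially one at a time, since they concern somewhat separate objects (the control cost, the entropy cost, and the convergence of the controlled empirical measures).

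First I would prove \eqref{ueq}. The function $\psi^N_i$ defined in \eqref{psichoice} is the piecewise-constant-in-time interpolation of $u$ evaluated at the space-time grid points $(jT/N, i/N)$; the associated $u_N = u_N(\psi^N)$ from \eqref{eq:psitoun} is then a piecewise-constant interpolation of $u$ on the grid $\{(jT/N, i/N)\}$ of $[0,T]\times S$. Since $u\in C^\infty([0,T]\times S)$ is in particular uniformly continuous and bounded, $u_N \to u$ uniformly on $[0,T]\times S$, hence in $L^2([0,T]\times S)$, and therefore $\int_{[0,T]\times S}|u_N|^2 \to \int_{[0,T]\times S}|u|^2$. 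The identity $\int_{[0,T]\times S}|u_N(t,\theta)|^2\,dt\,d\theta = \frac1N\sum_{i=1}^N\int_0^T|\psi_i^N(t)|^2\,dt$ recorded just after \eqref{eq:psitoun} then gives \eqref{ueq}.

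Next, \eqref{mutight} is the chain rule for relative entropy applied to product measures. With $\bar\Phi_i^N(dx) = N\int_{(i-1)/N}^{i/N}\pi^*_1(dx|\theta)\,d\theta$ as in \eqref{muchoice}, and recalling that $\mathbf{\bar\Phi}_i^N = \bar\Phi_i^N$ is deterministic here, form the measure $m^N(dx\,d\theta) = \sum_{i=1}^N \bar\Phi_i^N(dx)\,\mathbb{I}_{((i-1)/N,i/N]}(\theta)\,d\theta$ (as in the upper-bound argument). By the chain rule for relative entropy, $\frac1N\sum_{i=1}^N R(\bar\Phi_i^N\|\Phi) = R(m^N\|\pi_0)$. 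On each cell $((i-1)/N, i/N]$, $m^N$ replaces $\pi^*_1(\cdot|\theta)$ by its average over that cell; since $\pi_0(dx\,d\theta) = \Phi(dx)\,d\theta$ has the same form on each cell, convexity of $(\gamma,\eta)\mapsto R(\gamma\|\eta)$ (jointly, hence in the first argument with the second averaged too) — or more directly Jensen's inequality applied cell-by-cell — yields $R(m^N\|\pi_0) \le R(\pi^*\|\pi_0)$, which is \eqref{mutight}. This also shows $\sup_N \frac1N\sum_i R(\mathbf{\bar\Phi}_i^N\|\Phi) < \infty$, so the tightness hypotheses of the lemmas in Section \ref{tightness} hold.

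Finally, and this is the main obstacle, I must show $\bar\mu^N \Rightarrow \bar\mu$ in $\Om$ where $\bar\mu(t,d\theta)=m(t,\theta)d\theta$ and $m$ solves \eqref{ratepde} with the given $u$ and $m_0$. Here I would combine the three tools already in hand: Theorem \ref{HL}, Lemma \ref{continu}, and the uniqueness in Lemma \ref{existence}. Since $u\in C^\infty$ and $\int_S h(m_0)\,d\theta<\infty$ (the latter because $R(\pi^*\|\pi_0)<\infty$ and Jensen), Lemma \ref{existence} gives a unique weak solution $m$ to \eqref{ratepde}, so $\bar\mu$ is well defined. The controls $\psi^N$ are simple adapted (in fact deterministic) and satisfy the $L^2$ bound by \eqref{ueq}, and the initial entropies satisfy the bound by \eqref{mutight}, so the hypotheses of Lemma \ref{mubartight} and Theorem \ref{HL} are met. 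Thus $\{\bar\mu^N, u_N\}$ is tight, and along any subsequence it converges in distribution to some $(\bar\mu_*, u_*)$; by the first part of this lemma $u_* = u$ a.s.\ (the deterministic limit), and by Lemma \ref{convtopistar} the limiting $\bar L = \nu = \pi^*$ a.s.\ (again deterministic). Theorem \ref{HL} then forces $\bar\mu_*$ to have a density $\bar m_*$ that is a weak solution of \eqref{ratepde} with control $u$ and initial density $m_0(\theta)=\int_\RR x\,\pi^*_1(dx|\theta)$ (using Lemma \ref{UnifInt} to identify the initial condition as $\int_\RR x\,\pi^*(dx|\cdot)$). By the uniqueness statement in Lemma \ref{continu}(i), $\bar m_* = m$, so $\bar\mu_* = \bar\mu$, a deterministic limit independent of the subsequence; hence the whole sequence $\bar\mu^N$ converges in distribution (equivalently in probability) to $\bar\mu$ in $\Om$. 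The delicate point throughout is checking that all the measurability/adaptedness and boundedness requirements needed to invoke Theorem \ref{HL} are genuinely satisfied by the explicit choices \eqref{muchoice} and \eqref{psichoice}, and that the direct-limit topology on $\Om$ causes no trouble — but this is handled by the uniform total-variation control implicit in the entropy bound together with Lemma \ref{continu}(ii)'s uniform boundedness conclusion, or directly via \eqref{GPVest}-type estimates.
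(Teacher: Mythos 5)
Your proposal is correct and follows essentially the same line of reasoning as the paper: uniform continuity of $u$ for \eqref{ueq}, Jensen's inequality (equivalently, joint convexity of relative entropy) for \eqref{mutight}, and then tightness from Lemma \ref{mubartight} combined with the subsequential characterization in Theorem \ref{HL}, the identification of the initial condition via Lemmas \ref{UnifInt} and \ref{convtopistar}, and uniqueness from Lemma \ref{continu} to conclude convergence of $\bar\mu^N$ to the deterministic limit $\bar\mu$. The only cosmetic difference is that you route \eqref{mutight} through the auxiliary measure $m^N$ and the chain rule before applying Jensen, whereas the paper applies Jensen directly to each $R(\bar\Phi_i^N\|\Phi)$; both are the same estimate.
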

\begin{proof}
	The first statement in the lemma is immediate from the uniform continuity of $u$. The second is a consequence of Jensen's inequality:
	\begin{align}
	\frac{1}{N}\sum_{i=1}^NR(\mathbf{\bar\Phi}_i^N\|\Phi) &= \frac{1}{N}\sum_{i=1}^NR\left(N\int_{(i-1)/N}^{i/N}\pi_1^*(dx|\theta)d\theta\|\Phi\right)\nonumber\\
	&\leq \sum_{i=1}^N\int_{(i-1)/N}^{i/N}R(\pi_1^*(dx|\theta)\|\Phi)d\theta
	=R(\pi^*\|\pi_0). \label{mutightA} 
	\end{align}
	Now consider the final statement.  
From the convergence in \eqref{ueq} and  from the chain rule for relative entropies,
$$\frac{1}{N} R(\Pi^N \| \Phi^N) = \frac{1}{N}\sum_{i=1}^NR(\mathbf{\bar\Phi}_i^N\|\Phi) \le R(\pi^*\|\pi_0)<\infty,$$
and thus, by Lemma $\ref{mubartight}$,  $\{\bar\mu^N\}$ is tight.

	Let $\bar\mu$ be any subsequential weak limit of $\bar\mu^N$.  By Lemma \ref{UnifInt} and Lemma \ref{convtopistar}, $\bar\mu(0, d\theta)=m_0(\theta)d\theta,$ and by construction $u_N$ converges to $u$ in $L^2([0,T] \times S)$. By Theorem \ref{HL} we now see that $\bar\mu(t,d\theta)$ has a density $\bar m(t,\theta)$ for  $0\leq t\leq T$ and that $\bar m(t,\theta)$ solves (\ref{ratepde})  with $u$ as above and initial condition $m_0$.
	The unique solvability of this equation is a consequence of Lemma \ref{continu}.  The result follows. \end{proof}

	 %

We now complete the proof of the Laplace lower bound \eqref{eq:mainlaplowbd}.
Fix $F$ bounded and continuous, and let $\epsilon>0.$ Choose $\bar{\mu}^*\in \Om$ such that 
\begin{equation}\label{eq:approxlowbd}
F(\bar{\mu}^*) + I(\bar{\mu}^*) \leq \inf_{\mu \in \Omega} \{F(\mu) + I(\mu)\} + \epsilon,
\end{equation}
and then choose $u^* \in L^2([0,T]\times S)$ and $\pi^* \in \clp_*(\RR \times S)$ such that $\bar{\mu}^*\in\mathcal{M}_\infty(u^*,\pi^*)$ and 
\begin{equation}\label{eq:eq844}
I(\bar{\mu}^*) + \epsilon \geq \frac{1}{2}\left[\int_0^T\int_S |u^*(s,\theta)|^2 d\theta ds\right] + R(\pi^*\|\pi_0).
\end{equation}

Fix $\delta \in (0,1)$ and let $u^{**}\in C^{\infty}([0,T]\times S)$ be such that $\|u^{**}-u^*\|_2\leq \frac{\delta}{2(1+ \|u^*\|_2)}$. 
Let $m_0(\theta) \doteq \int_{\RR} x \pi^*_1(dx|\theta)$ for $\theta \in S$. Note that
$$\int_S h(m_0(\theta)) d\theta \le \int_S R(\pi^*_1(\cdot|\theta) \| \Phi) d\theta = R(\pi^*\|\pi_0) <\infty.$$
Therefore, by Lemma \ref{existence} there exists  $\bar\mu^{**} \in \Om$ such that $\bar\mu^{**}(t,d\theta)$ has a density $\bar m^{**}(t,\theta)$ for  $0\leq t \leq T,$ that  satisfies (\ref{hl1}) and (\ref{hl2}),
and is the unique weak solution of (\ref{ratepde}) (with $m$ replaced with $\bar m^{**}$) with the above choice of $m_0$ and
$u$ replaced by $u^{**}$.
In particular, $\bar\mu^{**}\in\mathcal{M}(u^{**},\pi^*).$ By the last statement in Lemma \ref{continu} and the continuity of $F$  we have that $|F(\bar\mu^*) -F(\bar\mu^{**})|\leq \epsilon$ if $\delta$ is chosen to be sufficiently small. 
Define $\bar{\Phi}_i^N$ and $\psi_i^N$ by \eqref{muchoice} and \eqref{psichoice}, respectively, with $(\pi, u)$ replaced with $(\pi^*, u^{**})$.
Let $\Pi^N$ be defined using $\pi^*$ as in the statement of Lemma \ref{lem:smoothapprox}. From Lemma \ref{lem:smoothapprox} it then follows that $\bar \mu^N$ associated with $(\Pi^N, \psi^N)$ converges
to $\bar \mu^{**}$ in distribution and \eqref{ueq}, \eqref{mutight} are satisfied.
Thus, by (\ref{expectvar}),  
\begin{align*}
&\limsup_{N\rightarrow\infty} - \frac{1}{N} \log \mathbb{E} \exp\left(-NF\left(\mu^N\right)\right) \\
&\leq \limsup_{N\rightarrow\infty} \bar{\mathbb{E}}_{\Pi^N}\left(F\left(\bar\mu^N\right)  + \frac{1}{N} \sum_{i=1}^N \left(R(\mathbf{\bar\Phi}_i^N\|\Phi)+\frac{1}{2}\int_0^T|\psi_i^N(s)|^2 ds\right)\right)\\
&\leq F(\bar\mu^{**}) + R(\pi^*\|\pi_0)+\frac{1}{2}\int_S\int_0^T|u^{**}(s,\theta)|^2dsd\theta\\
&\leq F(\bar\mu^{*}) + R(\pi^*\|\pi_0)+\frac{1}{2}\int_S\int_0^T|u^*(s,\theta)|^2dsd\theta + \epsilon + 2\delta\\
&\leq F(\bar\mu^*)+I(\bar\mu^*)+2\epsilon+2\delta\\
&\leq \inf_{\mu\in\Omega} \{F(\mu)+I(\mu)\}+3\epsilon+2\delta,
\end{align*} 
where the second inequality uses the convergence $\bar \mu^N\to \bar \mu^{**}$, the continuity of $F$, \eqref{ueq}, and \eqref{mutight}, the third inequality makes use of our choice 
of $\delta$, the fourth follows on using \eqref{eq:eq844} and the last inequality
uses \eqref{eq:approxlowbd}.
Sending $\delta$ and $\epsilon$ to $0$ completes the proof of the Laplace lower bound. \hfill \qed


\section{Proof of Lemma \ref{ALM}}
\label{sec:prooflemalm}
Let $\{\mu^n\}_{n=1}^\infty\subset \Gamma_{l,M}$. 
For each $n \ge 1,$ we can find $\pi^n\in\mathcal{P}_*(\mathbb{R}\times S)$ and $u^n\in L^2([0,T]\times S)$ such that $\mu^n \in \mathcal{M}^{\infty}(u^n,\pi^n)$ and 
\begin{equation}\label{nearoptimal}
R(\pi^n\|\pi_0) + \frac{1}{2}\int_0^T\int_S |u^n(t,\theta)|^2 d\theta ds\leq I(\mu^n) + \frac{1}{n}.
\end{equation}
Let for $n\in \NN$, $m^n_0(\theta)= \int_{\RR} x \pi^n_1(dx|\theta)$, $\theta \in S$. Then, for all $n\in \NN$,
$\int_{S} h(m^n_0(\theta)) d\theta \le R(\pi^n \|\pi_0) \le M+1.$
Using Lemmas \ref{existence} and \ref{continu} we may choose $\delta_n \in (0,1/n)$ and $u^{n,*}\in C^{\infty}([0,T]\times S)$ 
such that $\|u^n-u^{n,*}\|_2^2 \leq \delta_n$, and the unique weak solution $m^{n,*}$ of \eqref{ratepde} with $m_0 = m^n_0$
and $u= u^{n,*}$ has the property that $d_*(\mu^n,\mu^{n,*})\leq \frac{1}{n},$ where 
$\mu^{n,*}(t,d\theta) = m^{n,*}(t,\theta) d\theta$ for $t\in [0,T]$.
For $N\in \NN$, define  $\{\bar\Phi^{N,n}_i\}_{i=1}^N$ and $\{\psi^{N,n}_i\}_{i=1}^N$  by \eqref{muchoice} and 
\eqref{psichoice}, respectively,  replacing $(u,\pi)$ with $(u^{n,*}, \pi^n)$. Define $\Pi^{N,n}$ as we defined $\Pi^N$
in the statement of Lemma \ref{lem:smoothapprox}, with $\bar \Phi^{N}_i$ replaced with $\bar\Phi^{N,n}_i$, and let for each $n \in \NN$, the sequences
$\bar X^{N,n}$, $\bar \mu^{N,n}$  be constructed using $(\Pi^{N,n}, \psi^{N,n})$ as in Section \ref{subsec:varrep}. For each fixed $n$, from Lemma \ref{lem:smoothapprox}, $\bar \mu^{N,n}$ converges in probability, in $\Om$, as $N \rightarrow \infty$ to
$\mu^{n,*}$. From Lemma \ref{lem:topprops}(b) we must have $d_*(\bar \mu^{N,n}, \mu^{n,*}) \to 0$ in probability as
$n\to \infty$. 
Also, defining $\bar L^{N,n}$ as in \eqref{eq:lndxdt} with $\bar X^N$ replaced with $\bar X^{N,n}$ we see from Lemma \ref{convtopistar} that for each $n$, $\bar L^{N,n}$ converges to $\pi^n$ in probability as $N\to \infty$.
So for each $n$ we may choose $N_n$ such that
\begin{equation}\label{constar}
\bar{\mathbb{P}}\left(d_*(\bar\mu^{N_n,n},\mu^{n,*}) > 2^{-n}\right) < 2^{-n},
\end{equation}
\begin{equation}\label{constar3}
\|u^{n,*}- u^{n,N}\|_2^2 \le \frac{1}{n}
\end{equation}
where $u^{n,N}$ is defined by the right side of \eqref{eq:psitoun} by replacing $\psi^N$ with $\psi^{N,n}$,  and
\begin{equation}\label{constar2}
\bar{\mathbb{P}}\left(d_{BL}(\bar L^{N_n,n},\pi^{n}) > 2^{-n}\right) < 2^{-n},
\end{equation}
where $d_{BL}$ here denotes the bounded-Lipschitz distance on $\clp(\RR\times S)$.
Note that the sequence $(\Pi^{N_n,n})_{n=1}^\infty$ satisfies $R(\Pi^{N_n,n}\|\Phi^{N_n})\leq N_n(M+1)$
and, due to \eqref{constar3} and \eqref{nearoptimal},
$$\sup_{n\in \NN} \frac{1}{N_n} \sum_{i=1}^{N_n} \int_0^T |\psi^{N_n,n}_i(s)|^2 ds \le 4(M+3).$$
So again using Lemma \ref{mubartight}, the sequence $\bar\mu^{N_n,n}$ is tight.
Consider a subsequence, denoted again as $\{n\}$, along which $\bar\mu^{N_n,n}$ converges in distribution, in $\Om$, to some limit $\mu^*.$ By (\ref{nearoptimal}), $u^{n}$ is uniformly bounded in $L^2([0,T] \times S)$ and $\pi^n$ are tight, so we may restrict attention to a further subsequence (denoted again as $\{n\}$) along which $u^n$ (and therefore also $u^{n,*}$ and
$u^{N_n,n}$) converge weakly in $L^2([0,T] \times S)$ to some $u^*$ and $\pi^n$ converge weakly to some limit $\pi^*.$ Note that
from \eqref{constar2} we also have that $\bar L^{N_n,n}$ converges in probability to $\pi^*$. 
From the lower semi-continuity of relative entropy and \eqref{nearoptimal} we see that $\pi^* \in \clp_*(\RR\times S)$. Thus from Theorem  \ref{HL} we have that $\mu^* \in\mathcal{M}^\infty(u^*,\pi^*)$ a.s.
Furthermore,
\begin{align*}
I(\mu^*)&\leq R(\pi^*\|\pi_0) + \frac{1}{2}\int_0^T\int_S |u^*(t,\theta)|^2 d\theta dt\\
&\leq \liminf_{n\rightarrow\infty} \left(R(\pi^n\|\pi_0) + \frac{1}{2} \int_0^T\int_S |u^{n,*}(t,\theta)|^2 d\theta dt\right)\leq M
\end{align*}
and therefore $\mu^* \in \Gamma_{l,M}$.
Finally, from \eqref{constar} and the fact that along the subsequence $\bar\mu^{N_n,n}$ converges in distribution, in $\Om$, to  $\mu^*$ we have that, along the same subsequence,
$d_*(\mu^{n,*}, \mu^*) \rightarrow 0$ (in particular $\mu^*$ is non-random) and combining this with the fact  that $d_*(\mu^n,\mu^{n,*})\leq \frac{1}{n},$ we now have that $d_*(\mu^{n}, \mu^*) \rightarrow 0$ along the subsequence. Thus we have
constructed a subsequence of the original sequence $\{\mu^n\}$ that converges in $\Om^l$ to $\mu^* \in \Gamma_{l,M}$ which proves the result.
\hfill \qed

\section{Proof of Theorem \ref{HL}}\label{ProofHL}

\begin{proof}
	Proof is based on the proof of  \cite[Theorem 5.1]{guopapvar}, which is an analogous result for the uncontrolled process, and therefore we will only comment on steps that are different. 
Parts (i)-(iii) follow from  \cite[Lemma 6.3]{guopapvar} using the entropy bound \eqref{initentest}
given in Lemma \ref{entlemma} and Fubini's Theorem. 

Part (iv) follows from  \cite[Lemma 6.6]{guopapvar} using in addition to the entropy bound in Lemma \ref{entlemma}, the Dirichlet form bound in Lemma \ref{Ilemma},  Fubini's Theorem and the observation that
$$
\mathbb{E}_{\bar{Q}}\left[\int_0^T\int_{S}\left[\partial_{\theta}\left(h'(\bar{m}(t, \theta))\right)\right]^2 d \theta dt \right] = 
T \mathbb{E}_{\frac{1}{T}\int_0^T\bar{Q}_s ds}\left[\int_{S}\left[\partial_{\theta}\left(h'(m(\theta))\right)\right]^2 d \theta \right],
$$
where $\bar{Q}$ is the law of $\bar\mu,$ $\bar{Q}_t$ denotes the marginal of $\bar{Q}$ at time $t$ and $\mathbb{E}_{\bar{Q}}$ denotes expectation with respect to the underlying measure $\bar{Q}$ (similarly for $\mathbb{E}_{\frac{1}{T}\int_0^T\bar{Q}_s ds}$). In particular, on the right side, $\mu(d\theta)= m(\theta) d\theta$ is a $\clm_S$-valued random variable with probability law
$\frac{1}{T}\int_0^T\bar{Q}_s ds$.

We now prove (v). Define for $l \in \NN$, the cutoff function $\phi'_l$  given by 
\begin{equation}
  \phi'_l(x) \doteq
  \begin{cases}
                                   \phi'(x) & \text{if $|\phi'(x)|\leq l$} \\
                                   l & \text{if $\phi'(x)>l$} \\
  -l & \text{if $\phi'(x)<-l$},
  \end{cases} \label{eq:cutoff}
\end{equation}
and let 
$$
\widetilde{\phi'}_l(x) = \frac{1}{e^{\rho(\lambda)}}\int_{\mathbb{R}} e^{\lambda y - \phi(y)}\phi'_l(y)dy, \; \mbox{ where } \lambda = h'(x).
$$
Note that for $N\in \NN$, $u_N$ defined by \eqref{eq:psitoun} is a  $\mathcal{S}_{C_0}$-valued random variables and thus we can extract a subsequence which converges weakly, in distribution,  to some $u$ with
values  in $\mathcal{S}_{C_0}$.

Let $\epsilon >0$. For fixed $t \in [0,T]$ and a  smooth function $J$ on $S$,  define 
\begin{align}\label{fform}
H^{N,t}_{l, \epsilon} 
&\doteq \int_{S} J(\theta) \bar{\mu}_N(t,d\theta) - \int_{S} J(\theta)\bar{\mu}_N(0,d\theta)\nonumber\\
 &\qquad - \frac{1}{2N}\int_0^t\sum_{i=1}^N J''(i/N)\widetilde{\phi'}_l\left(\frac{\sum_{j=i-[N\epsilon]}^{j=i+[N\epsilon]}\bX_j^N(s)}{1 + [2N\epsilon]}\right) ds - \int_0^t\int_{S}J'(\theta)u_N(s,\theta) d\theta ds.
\end{align}
Note that, as $N\to \infty$, $H^{N,t}_{l, \epsilon}$ converges in distribution to
\begin{align*}
H^{t}_{l,\epsilon} &\doteq \int_{S} J(\theta) \bar{\mu}(t,d\theta) - \int_{S} J(\theta)\bar{\mu}(0,d\theta)\\
 &\quad- \frac{1}{2}\int_0^t\int_{S} J''(\theta)\widetilde{\phi'}_l\left(\frac{\bar{\mu}(s, [\theta- \epsilon, \theta + \epsilon])}{2\epsilon}\right) d\theta ds - \int_0^t\int_{S}J'(\theta)u(s,\theta) d\theta ds
\end{align*}
and therefore for each $l\in \NN$ and $\epsilon \in (0,\infty)$
\begin{equation}\label{limsup}
\bar{\mathbb{E}}\left[|H^{t}_{l,\epsilon}|\right] \le \limsup_{N \rightarrow \infty} \bar{\mathbb{E}}_{\Pi^N}\left[|H^{N,t}_{l, \epsilon}|\right].
\end{equation}
To prove (v) of the theorem, we first show that 
\begin{equation}
	\label{eq:limsuplimsup}
	\limsup_{l\to\infty} \limsup_{\epsilon\to 0} \limsup_{N \rightarrow \infty} \bar{\mathbb{E}}_{\Pi^N}\left[|H^{N,t}_{l, \epsilon}|\right]=0.
\end{equation}
 To see this,  write
\begin{align}
&\int_{S} J(\theta) \bar{\mu}_N(t,d\theta) - \int_{S} J(\theta)\bar{\mu}_N(0,d\theta) - \int_0^t\int_{S}J'(\theta)u_N(s,\theta) d\theta ds\nonumber\\
&\quad= \frac{1}{N}\sum_{i=1}^N J(i/N)\bX_i^N(t) - \frac{1}{N}\sum_{i=1}^N J(i/N)\bX_i^N(0) - \int_0^t\int_{S}J'(\theta)u_N(s,\theta) d\theta ds\nonumber\\
&\quad= \frac{N}{2}\int_0^t \sum_{i=1}^N \left(J((i+1)/N) - 2J(i/N) + J((i-1)/N)\right) \phi'(\bX_i^N(s))ds + M_N(t) \label{eq:martweak}
\end{align}
where $M_N$ is a martingale  given by
$$
M_N(t) \doteq \int_0^t\sum_{i=1}^N \left(J(i/N) - J((i-1)/N)\right)dB_i(s).
$$
Using a straightforward estimate on the second moment of $M_N$ (see  \cite[equation (5.3)]{guopapvar}) we see from \eqref{eq:martweak}, as in the proof of  \cite[equation (5.4)]{guopapvar}, that
\begin{multline}\label{ito1a}
 \lim_{N \rightarrow \infty}\bar{\mathbb{E}}_{\Pi^N}\left|\frac{1}{N}\sum_{i=1}^N J(i/N)\bX_i^N(t) - \frac{1}{N}\sum_{i=1}^N J(i/N)\bX_i^N(0) - \int_0^t\int_{S}J'(\theta)u_N(s,\theta) d\theta ds\right.\\
\left.- \frac{1}{2N}\int_0^t \sum_{i=1}^NJ''(i/N)\phi'(\bX_i^N(s))ds\right| = 0.
\end{multline}
Also, \cite[equation  (5.6)]{guopapvar} carries over verbatim (with $\psi_l$ replaced by $\phi'_l$) and we obtain
\begin{multline}\label{ito1}
\lim_{l \rightarrow \infty} 	 \lim_{N \rightarrow \infty}\bar{\mathbb{E}}_{\Pi^N}\left|\frac{1}{N}\sum_{i=1}^N J(i/N)\bX_i^N(t) - \frac{1}{N}\sum_{i=1}^N J(i/N)\bX_i^N(0) - \int_0^t\int_{S}J'(\theta)u_N(s,\theta) d\theta ds\right.\\
	\left.- \frac{1}{2N}\int_0^t \sum_{i=1}^NJ''(i/N)\phi'_l(\bX_i^N(s))ds\right| = 0.
\end{multline}
Recall that for $t\in [0,T]$, $\bar{\mathbb{Q}}_{\Pi^N}(t)$ denotes the probability law of $\bar X^N(t)= (\bar X^N_1(t), \ldots \bar X^N_N(t))$.
%
%
%
%
%
%
From Lemmas \ref{entlemma} and \ref{Ilemma} we see  that for some $C_1, C_2 \in (0,\infty)$
the density of $\frac{1}{T} \int_0^T \bar{\mathbb{Q}}_{\Pi^N}(t) dt$ lies in the class $\underline{A}_{N,C_1, C_2}$ defined  in \cite[page 43]{guopapvar} for all $N\in \NN$. Thus, we can apply  \cite[Theorem 4.1]{guopapvar} to obtain
\begin{equation}\label{block}
\lim_{\epsilon \rightarrow \infty} \limsup_{N \rightarrow \infty}\bar{\mathbb{E}}_{\Pi^N}\left|\frac{1}{2N}\int_0^t \sum_{i=1}^NJ''(i/N)\phi'_l(\bX_i^N(s))ds - \frac{1}{2}\int_0^t\sum_{i=1}^N J''(i/N)\widetilde{\phi'}_l\left(\frac{\sum_{j=i-[N\epsilon]}^{j=i+[N\epsilon]}\bX_j^N(s)}{1 + [2N\epsilon]}\right) ds\right| = 0
\end{equation}
for every $l$.
Using \eqref{ito1} and \eqref{block} in \eqref{fform}, we obtain \eqref{eq:limsuplimsup}.
This, combined with \eqref{limsup}, yields
$
\lim_{l \rightarrow \infty} \limsup_{\epsilon \rightarrow 0}\bar{\mathbb{E}}|H^{t}_{l, \epsilon}|=0.
$
The limit  as $\epsilon \rightarrow 0$ can be take inside the expectation because the third term in $H^{t}_{l, \epsilon}$ is uniformly bounded
and converges to $\frac{1}{2}\int_0^t\int_{S} J''(\theta)\widetilde{\phi'}_l\left(\bar{m}(s, \theta)\right) d\theta ds$.
Together with part (i), this yields
\begin{multline*}
\liminf_{l \rightarrow \infty} \left|\int_{S} J(\theta) \bar{m}(t,\theta)d\theta - \int_{S} J(\theta)\bar{m}(0,\theta)d\theta\right.\\
\left. - \frac{1}{2}\int_0^t\int_{S} J''(\theta)\widetilde{\phi'}_l\left(\bar{m}(s, \theta)\right) d\theta ds - \int_0^t\int_{S}J'(\theta)u(s,\theta) d\theta ds\right|=0.
\end{multline*}
From \cite[Lemma 6.5]{guopapvar}, for every $\sigma>0$
$$|\widetilde{\phi'}_l(x)| \le \frac{1}{\sigma} \log \int e^{\sigma |\phi'(y)| - \phi(y)} dy + \frac{h(x)}{\sigma}.$$
Also, from \cite[Lemma 6.4]{guopapvar}, $\widetilde{\phi'}_l(x) \to h'(x)$ as $l\to \infty$. Combining these two facts with part (iii) of the theorem, and sending $l\to \infty$, we see that
\begin{align*}
 \int_{S} J(\theta) \bar{m}(t,\theta)d\theta - \int_{S} J(\theta)\bar{m}(0,\theta)d\theta
 - \frac{1}{2}\int_0^t\int_{S} J''(\theta)h'\left(\bar{m}(s, \theta)\right) d\theta ds - \int_0^t\int_{S}J'(\theta)u(s,\theta) d\theta ds=0
\end{align*}by the dominated convergence theorem
which proves part (v).
\end{proof}
\section{Entropy and Dirichlet Form Bounds}\label{entsection}
In this section, we establish the key bounds on relative entropy and Dirichlet forms stated in Lemmas  \ref{entlemma} and  \ref{Ilemma}. A key ingredient in the proof of Lemma \ref{Ilemma} is a suitable regularity of the density
of the controlled process $\bar X^N(t)$. This is studied in Section \ref{sec:regdens} and proofs of Lemmas \ref{entlemma} and \ref{Ilemma} are given in Section \ref{tech}.
Throughout this section $\Pi^N, \psi^N$ and $\bar X^N$ are as in the statement of Lemma \ref{entlemma}.
\subsection{Regularity of the Density of $\bar X^{N}(t)$}
\label{sec:regdens}
In this subsection we will show that $\bar X^{N}(t)$ has a density $\bar p_N(t,x)$ with respect to the measure $\Phi^N(dx)$ which is continuously differentiable in time and twice continuously differentiable in space along the vector fields $V_1,\dots,V_N$. This will allow us to apply It\^{o}'s formula in the following subsection. The following is the main regularity lemma of this subsection.

Recall that $\bar \QQ_{\Pi^N}(t)$ denotes the probability law of $\bar X^{N}(t)$. The following lemma holds for each fixed $N\in \NN$. Recall that for each $N$, the control $\psi^N \in \cla^N_s(\sys_{\Pi^N})$ is defined in terms of
a partition $0=t_0 \le \cdots \le t_k=T$ and random variables $\{U_{ij}\}$ as in \eqref{eq:defnsimpcont} and that these random variables satisfy a uniform bound as in \eqref{controlas}. The partition and the uniform bound may depend on the control. This special structure of the control is important in the proof.

\begin{lem}\label{reglemma}
	For every $t>0$, $\bar \QQ_{\Pi^N}(t)$ is absolutely continuous with respect to $\Phi^N$.
Let $\bar p(t,\cdot)$ denote the corresponding density with respect to the measure $\Phi^N(dx)$. Then $\bar p(\cdot,\cdot)$ is continuously differentiable in time and twice continuously differentiable in space along the vector fields $V_1,\dots,V_N$ for $\{t \in(t_j, t_{j+1}): 0\le j \le K-1\}$ and $x \in \Reals^N$.
\end{lem}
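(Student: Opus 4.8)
The degeneracy of the diffusion cannot be removed: the total charge $\frac{1}{N}\sum_i \bar X^N_i(t)$ is conserved, so the (constant) diffusion matrix $N^2(2I-S-S^{-1})$, with $S$ the cyclic shift, has rank $N-1$ and null direction $(1,\dots,1)$ — which is exactly why the statement is phrased via a density relative to $\Phi^N$ and differentiability along the $V_i$. The plan is to freeze the control on each interval of constancy, quotient out the conserved coordinate, apply classical parabolic regularity to the resulting genuinely non-degenerate diffusion, and reassemble. \emph{Step 1 (freezing the control).} Fix $j$ and $t\in(t_j,t_{j+1})$. Conditioning on $\bar\clf_{t_j}$ freezes the bounded variables $\{U_{ij}\}_i$ to constants, so on $(t_j,t_{j+1}]$ the process $\bar X^N$ is, conditionally, a time-homogeneous diffusion with generator $G=\mathcal{L}^N+W$, $W=N\sum_i(U_{ij}-U_{i+1,j})\partial_i$. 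As $\sum_i(U_{ij}-U_{i+1,j})=0$, $W$ lies in the span of $V_1,\dots,V_N$ and, like $\mathcal{L}^N$, annihilates $m\doteq\frac{1}{N}\sum_i x_i$. Since $\bar\QQ_{\Pi^N}(t,\cdot)=\bar\EE[\,P^G_{t-t_j}(\bar X^N(t_j),\cdot)\,]$ with $P^G_s$ the transition kernel of $G$, everything reduces to understanding $P^G_s$, $s\in(0,t_{j+1}-t_j)$, with enough uniformity in the starting point and in the (uniformly bounded) coefficients of $W$ to differentiate under the expectation.

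\emph{Step 2 (quotienting the conserved coordinate).} Pass to linear coordinates $(y',m)$ with $y_i=x_i-x_{i+1}$ ($1\le i\le N-1$) and $m=\frac{1}{N}\sum_i x_i$. Then $V_1,\dots,V_{N-1}$ become constant-coefficient vector fields spanning the $\partial_{y'}$-directions, $G$ has no $\partial_m$ component, and for each fixed $m$ the operator $G$ is uniformly elliptic in $y'\in\RR^{N-1}$ with constant positive-definite principal part and lower-order coefficients built from $\phi'$, hence of class $C^{0,\alpha}_{\mathrm{loc}}$. It is convenient to measure densities against $\Phi^N$ (equivalently, inside each hyperplane $H_m\doteq\{\frac{1}{N}\sum x_i=m\}$, against the conditional law $\Phi^N_m$): since $\mathcal{L}^N$ is $\Phi^N$-self-adjoint with $\mathcal{L}^N 1=0$, the Fokker--Planck operator is $G^{*}=\mathcal{L}^N-W+c$ with $c(x)=N\sum_i(U_{ij}-U_{i+1,j})\phi'(x_i)$, so all coefficients of the forward equation are $C^{0,\alpha}_{\mathrm{loc}}$ — crucially no $\phi''$ enters — while the degenerate principal part has constant coefficients.

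\emph{Step 3 (regularity inside hyperplanes and reassembly).} For each $m$ the diffusion confined to $H_m$ is non-degenerate with constant diffusion matrix and $C^{0,\alpha}_{\mathrm{loc}}$ drift, so by the classical parametrix construction (or interior Schauder estimates for the forward equation) it has a transition density $r^m_s(\xi,z)$ relative to $\Phi^N_m$ which for $s>0$ is jointly continuous, strictly positive, $C^1$ in $s$ and $C^2$ in $z$, and satisfies Gaussian-type bounds — together with bounds on its $\partial_s$- and $\partial_z$-derivatives — locally uniformly in $s$ bounded away from $0$ and uniformly over the bounded family of admissible $W$. Disintegrating over $m$, and using $\pi_*\Pi^N\ll\pi_*\Phi^N$ (which holds since $R(\Pi^N\|\Phi^N)<\infty$), one gets that $\bar\QQ_{\Pi^N}(t,\cdot)$ is absolutely continuous with respect to $\Phi^N$ with $\bar p(t,x)=g(\pi(x))\,\bar r_{\pi(x)}(t,x)$, where $g=\frac{d(\pi_*\Pi^N)}{d(\pi_*\Phi^N)}$, $\pi(x)=\frac{1}{N}\sum_i x_i$, and $\bar r_m(t,z)=\bar\EE[\,r^m_{t-t_j}(\bar X^N(t_j),z)\mid\pi(\bar X^N(0))=m\,]$. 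As the $V_i$ annihilate $\pi$, the factor $g(\pi(x))$ is constant along every $V_i$ and is irrelevant to the differentiation; the uniform derivative bounds from the parametrix allow $\bar\EE$ to be exchanged with $\partial_t$ and with $V_i,V_iV_k$ applied in $z$, so $\bar p(t,\cdot)$ is $C^2$ along $V_1,\dots,V_{N-1}$ and $C^1$ in $t$ on $(t_j,t_{j+1})$; then $V_N=-(V_1+\dots+V_{N-1})$ gives regularity along $V_N$, and passing from the $(y',m)$-convention back to the $x$-variable costs nothing, since the change of variables is linear and the Jacobian factor $e^{\sum_l\phi(x_l)}$ between the two density conventions is $C^2$.

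\emph{Main obstacle.} The technical crux is Step~3: obtaining a transition density for the transverse diffusion with the stated space--time regularity \emph{and} with derivative bounds uniform enough — locally uniform in $z$, uniform in the starting point and in the bounded family of drift perturbations $W$ — to permit differentiation under the expectation. Since the drift of $G$ contains $\phi'$, which need not be bounded, the desired global Gaussian bounds are not automatic; one must exploit the gradient-type structure of the drift together with the standing growth hypotheses on $\phi$ (including $\int e^{\sigma|\phi'|-\phi}\,dx<\infty$) to control the interaction between the unbounded coefficients and the averaging against the law of $\bar X^N(t_j)$. A secondary, bookkeeping-type point is to perform the disintegrations over the conserved coordinate measurably and to verify that the pieces genuinely recombine into a bona fide $\Phi^N$-density on all of $\RR^N$.
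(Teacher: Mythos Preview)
Your reduction in Steps 1--2 is correct and matches the paper: freeze the control on each $(t_j,t_{j+1})$, pass to the hyperplane of constant total charge, and study a genuinely elliptic diffusion on $\RR^{N-1}$. The paper uses coordinates $(x_1,\dots,x_{N-1},s)$ with $s=\sum x_i$ rather than your $(y',m)$, but this is cosmetic.

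Step 3 is where the real gap lies, and you have correctly located it yourself. The classical parametrix construction (and the associated Gaussian derivative bounds) you invoke requires at least bounded coefficients; here the drift contains $\phi'(x_i)$, which is constrained only by $\int e^{\sigma|\phi'|-\phi}\,dx<\infty$ and can grow arbitrarily fast. Your proposed fix --- leveraging that integrability condition to control the averaging against the law of $\bar X^N(t_j)$ --- is not the right lever: what you actually need to justify differentiation under the expectation is a bound on $\partial_t r^m_s(\xi,z)$ and $V_iV_k r^m_s(\xi,z)$ that is uniform (or at least integrable) in the \emph{starting point} $\xi$, for $z$ in a fixed compact set. The condition on $\phi'$ is an integrability statement against $\Phi$, not a growth bound, and does not by itself yield such uniform control on a transition density with unbounded drift.

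The paper circumvents this entirely. Instead of building a parametrix for the full drifted operator, it (i) multiplies the unknown density by a fixed smooth cutoff $\eta$ compactly supported in the $x$-variable, so that every coefficient that appears --- including the $\phi'$-terms --- becomes a compactly supported, hence bounded, $C^1$ function; (ii) derives an explicit Duhamel-type representation for $\eta\bar q$ against the \emph{constant-coefficient} Gaussian kernel $G$ of $\frac{N^2}{2}\sum_i\hat V_i^2$; and (iii) bootstraps $L^m$-bounds on $\bar q$ restricted to compacta via Young's inequality, starting from the free estimate $\|\bar q(z,s,\u;t,\cdot)\|_1=1$ (valid uniformly in the starting point $z$ because $\bar q$ is a probability density). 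Once local $L^m$-bounds uniform in $z,s,\u$ are in hand, H\"older continuity in $x$ follows from the representation and the explicit kernel estimates, and then existence and local uniform bounds for $\partial_k\bar q$, $\partial_l\partial_k\bar q$, $\partial_t\bar q$ are obtained by differentiating the representation, using the H\"older continuity to handle the singular integrals $\int_0^{t^*}\|\partial_i\partial_k G(t^*-t,\cdot)\|_1\,dt=\infty$ via the standard subtraction trick. The localization step is what makes the unbounded $\phi'$ harmless; without it (or an equivalent device) your Step 3 does not go through as written.
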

\begin{proof}
	Some steps in the proof are standard PDE estimates but we give full details to keep the presentation self-contained.
As $\phi$ is twice continuously differentiable, the measure $\Phi^N(dx)$ has a twice continuously differentiable density $f_N(x) =  \exp{\left(-\sum_{i=1}^N\phi(x_i)\right)}$ with respect to Lebesgue measure. 
It will be convenient to work with the law of $(\bar X^{N}_1(t),\dots, \bar X^{N}_{N-1}(t), \bar S^{N}(t))$ where $\bar S^{N}(t) = \bar X^{N}_1(t) + \cdots + \bar X^{N}_{N}(t)$. Let $\Sigma_S = \{(x_1,\dots, x_N) \in \mathbb{R}^N: x_1 + \cdots + x_N = S\}$. As the vector fields $V_1,\dots,V_N$ are parallel to $\Sigma_S$, $\bar S^{N}(t) = \bar S^{N}(0)$ for all $t \ge 0$. Therefore, the process $(\bar X^{N}_1(t),\dots, \bar X^{N}_{N-1}(t), S^{N}(t))$ started at $(y_1, \dots, y_{N-1}, s)$ lives in the hyperplane $\Sigma_s$ for all time and by Girsanov's Theorem (see for example \cite{karshr}), for $t>0$, $(\bar X^{N}_1(t),\dots, \bar X^{N}_{N-1}(t))$ has a density $\{\bar q(t, x_1,\dots, x_{N-1} \mid (y_1, \dots, y_{N-1},s)): (x_1,\dots, x_{N-1}) \in \mathbb{R}^{N-1}\}$ with respect to the Lebesgue measure on $\mathbb{R}^{N-1}$. Denoting the density of $(\bar X^{N}_1(0),\dots, \bar X^{N}_{N-1}(0), \bar S^{N}(0))$ by $\zeta_N(y_1,\dots, y_{N-1},s)$, it is straightforward to check that the law of $(\bar X^{N}_1(t),\dots, \bar X^{N}_{N-1}(t), \bar S^{N}(t))$ has a density with respect to Lebesgue measure on $\mathbb{R}^N$ given by
\begin{equation}\label{hyp1}
\bar q(t,x_1, \dots, x_{N-1},s) = \int_{\mathbb{R}^{N-1}}\bar q(t, x_1,\dots, x_{N-1} \mid y_1, \dots, y_{N-1}, s) \zeta_N(y_1,\dots, y_{N-1},s)dy_1 \dots dy_{N-1}.
\end{equation}
In particular, $(\bar X^{N}_1(t),\dots, \bar X^{N}_{N}(t))$ has a density with respect to the Lebesgue measure which we write as $f_N(\cdot)\bar p(t, \cdot)$.
Now, consider for $j=0, 1, \ldots k-1$ the time interval $(t_j, t_{j+1})$. For brevity, write $y^{(N-1)} = (y_1, \dots, y_{N-1})$ and $x^{(N-1)} = (x_1, \dots, x_{N-1})$. As $V_1,\dots,V_N$ are parallel to $\Sigma_s$ for any $s \in \mathbb{R}$, to prove Lemma \ref{reglemma}, it suffices to prove that $(t, x^{(N-1)},s)\mapsto\bar q(t,x^{(N-1)},s)$ is continuously differentiable in time and twice continuously differentiable in the space variables $x_1, \dots, x_{N-1}$. 
By the representation \eqref{hyp1} and the dominated convergence theorem, it suffices to prove that for each $(y^{(N-1)}, s)$, $(t, x^{(N-1)})\mapsto \bar q(t, x^{(N-1)} \mid y^{(N-1)}, s)$ is once continuously differentiable in $t$ and twice continuously differentiable in $x^{(N-1)}$, and
for any three compact intervals $I \subset (t_j,t_{j+1})$, $J \in \Reals$ and $K \subset \Reals^{N-1}$,
\begin{multline}\label{dom1}
\sup_{y^{(N-1)}\in \Reals^{N-1}}\sup_{(t,s, x^{(N-1)}) \in I \times J \times K} \Big(|\bar q(t, x^{(N-1)} \mid y^{(N-1)}, s)| + |\partial_t \bar q(t, x^{(N-1)} \mid y^{(N-1)}, s)| \\
 + \sum_{i=1}^{N-1}|\partial_i \bar q(t, x^{(N-1)} \mid y^{(N-1)}, s)| + \sum_{i,k=1}^{N-1} |\partial_i \partial_k \bar q(t, x^{(N-1)} \mid y^{(N-1)}, s)|\Big) < \infty,
\end{multline}
where $\partial_i$ denotes partial derivative with respect to $x_i$.
Let $\alpha(dx^{(N-1)}_j, d\u_j \mid y^{(N-1)}, s)$ be the joint distribution of $(\bar X^{N}_1(t_j),\dots, \bar X^{N}_{N-1}(t_j))$ and $(U_{ij})_{1 \le i \le N}$ when $(\bar X^{N}_1(0),\dots, \bar X^{N}_{N-1}(0), S^{N}(0))=(y_1, \dots, y_{N-1}, s)$. Recall from \eqref{controlas} that $|U_{ij}| \le C$ for all $1 \le i,j \le N$. We will denote the box $[-C,C]^N$ by $\ball^N$. For $t \in (t_j, t_{j+1})$ and $x^{(N-1)} \in \Reals^{N-1}$, $\bar q$ has the representation
\begin{align}\label{align:densityu}
\bar q(t, x^{(N-1)} \mid y^{(N-1)}, s) &= \int_{\Reals^{N-1} \times \ball^{N}} \bar q(x^{(N-1)}_j,s,\u_j;t-t_j,x^{(N-1)}) \alpha(dx^{(N-1)}_j, d\u_j \mid y^{(N-1)}, s).
\end{align}
Here $\bar q(x^{(N-1)}_j,s,\u_j;t, \cdot)$ is the density with respect to Lebesgue measure of the process on $\Reals^{N-1}$ at time $t$ started from $x^{(N-1)}_j$ whose generator is given by
$$
\L^{\u_j,s}(x_1, \dots, x_{N-1})\doteq \frac{N^2}{2} \sum_{i=1}^N \hat V_i^2 - \frac{N^2}{2}\sum_{i=1}^N\left[\phi'(x_i) - \phi'(x_{i+1}) \right] \hat V_i - N\sum_{i=1}^N \u_{(i+1)j}\hat V_i
$$
where $\hat V_i = \partial_i - \partial_{i+1}$ for $1 \le i \le N-2$, $\hat V_{N-1} = \partial_{N-1}$ and $\hat V_N = -\partial_1$ are the projections of the vector fields $V_1, \dots, V_N$ onto $\mathbb{R}^{N-1}$, $x_N = s-x_1-\dots-x_{N-1}$, and $\u_{N+1,j}= \u_{1j}$. Thus, by the representation \eqref{align:densityu} and the dominated convergence theorem, in order to prove the lemma it suffices to show that
 $(t,x)\mapsto \bar q(z^{(N-1)},s,\u;t, x)$ is once continuously differentiable in the $t$ variable and twice continuously differentiable in $x$, on $(t_j, t_{j+1}) \times \RR^{N-1}$
for every $z^{(N-1)},s,\u$ and $j$, and for any three compact intervals $I \subset (t_j, t_{j+1})$, $J \subset \mathbb{R}$ and $K \subset \Reals^{N-1}$,
\begin{align}
&\sup_{\u \in \ball^{N}}\sup_{z^{(N-1)} \in \Reals^{N-1}}\sup_{(t, s, x^{(N-1)}) \in I \times J \times K} \Big(|\bar q(z^{(N-1)},s,\u;t,x^{(N-1)})| + |\partial_t \bar q(z^{(N-1)},s,\u;t,x^{(N-1)})| \nonumber\\
&\quad\quad  + \sum_{i=1}^{N-1}|\partial_i \bar q(z^{(N-1)},s,\u;t,x^{(N-1)})| + \sum_{i,k=1}^{N-1} |\partial_i \partial_k \bar q(z^{(N-1)},s,\u;t,x^{(N-1)})|\Big) < \infty. \label{eq:mainestim}
\end{align}
In order to prove the above statements we will use the broad outline of the proof of part (a) on page 21 of \cite{jorkinott}. 
Fix $\u = (u_1,\dots,u_N) \in \ball^{N}$. To avoid cumbersome notation, we will write $x$ for $x^{(N-1)} \in K$ and $z$ for $z^{(N-1)} \in \mathbb{R}^{N-1}$. 
For any $t^* \in (t_j,t_{j+1})$ and any  compactly supported smooth function $\zeta$ on $[t_j,t^*] \times \Reals^{N-1}$,
\begin{align}\label{align:dummy}
\int_{\Reals^{N-1}}\zeta(t^*,x)\bar q(z,s,\u;t^*,x)dx &= \int_{t_j}^{t^*}\int_{\Reals^{N-1}}\left(\partial_t \zeta + \L^{\u,s}\zeta\right)\bar q(z,s,\u;t,x)dx dt\nonumber\\
& \quad \quad + \int_{\Reals^{N-1}}\zeta(t_j,x)\bar q(z,s,\u;t_j,x)dx.
\end{align}
Let $\eta$ be a smooth compactly supported function on $\Reals^{N-1}$ such that $\eta(x)=1$ for all $x \in K.$ 
Then for any smooth $\zeta$ on  $[t_j,t^*] \times \Reals^{N-1}$, we have on  
substituting $\eta \zeta$ in place of $\zeta$ in the above equation, 
\begin{align}
&\int_{\Reals^{N-1}}\zeta(t^*,x)\eta(x)\bar q(z,s,\u;t^*,x)dx \nonumber\\
&\quad =  \int_{t_j}^{t^*}\int_{\Reals^{N-1}}\left(\partial_t \zeta + \frac{N^2}{2}\sum_{i=1}^N \hat V_i^2\zeta\right)\bar q(z,s,\u;t,x)\eta(x)dx dt\nonumber\\
&\quad\quad+ \int_{t_j}^{t^*}\int_{\Reals^{N-1}}\left(\L^{\u,s}\eta(x)\right)\bar q(z,s,\u;t,x)\zeta(t,x)dx dt\nonumber\\
&\quad\quad+ \int_{t_j}^{t^*}\int_{\Reals^{N-1}}\sum_{i=1}^N\left(N^2 \hat V_i \eta(x) + \frac{N^2}{2}(V_i \log f_N)(x) \eta(x) - Nu_i \eta(x)\right)\bar q(z,s,\u;t,x)\hat V_i\zeta(t,x)dx dt\nonumber\\
&\quad\quad+ \int_{\Reals^{N-1}}\eta(x)\zeta(t_j,x)\bar q(z,s,\u;t_j,x)dx,\label{testeq}
\end{align}
where we have used the fact that $f_N(x) =  \exp{\left(-\sum_{i=1}^N\phi(x_i)\right)}$ is the density of $\Phi^N(dx)$ with respect to Lebesgue measure on $\Reals^N$.
Next consider the equation
\begin{align}\label{align:PDEnodrift}
\partial_t G(t, x) = \frac{N^2}{2}\sum_{i=1}^N \hat V_i^2G (t, x),
\ \ G(0,x) = \delta_{0}(x).
\end{align}
It can be checked that \eqref{align:PDEnodrift} is solved by the density, with respect to Lebesgue measure on $\Reals^{N-1}$, of the process $N(B_1-B_N,B_2 - B_1, \dots, B_{N-1} - B_{N-2})$ (where $(B_1,\dots, B_N)$ is an $N$-dimensional Brownian motion), given by
$$
G(t,x)\doteq \frac{1}{|\Sigma|^{1/2}(2\pi tN^2)^\frac{N-1}{2}} \exp\left(-\frac{1}{2tN^2}x^{T} \Sigma^{-1} x\right)
$$
where $\Sigma = \left(\Sigma_{ij}\right)_{1 \le i,j \le N-1}$ is given by $\Sigma_{ii}=2, \Sigma_{ij} = -1$ if $|i-j|=1$ and $\Sigma_{ij}=0$ otherwise. For fixed $x^* \in \Reals^{N-1}$ and each $\delta>0$, using the function
$$
\zeta_{\delta}(t,x)\doteq G(t^*+\delta-t, x^*-x), \; t \in [t_j, t^*], \; x \in \RR^{N-1},
$$
 in place of $\zeta$ in \eqref{testeq} and taking the limit $\delta \downarrow 0$ in $L^1(\RR^{N-1})$, we have for a.e. $x^*$
\begin{multline}\label{align:maineq}
\eta(x^*)\bar q(z,s,\u;t^*,x^*) =\int_{t_j}^{t^*}\int_{\Reals^{N-1}}\L^{\u,s}\eta(x)\bar q(z,s,\u;t,x)G(t^*-t,x^*-x)dx dt\\
 + \int_{t_j}^{t^*}\int_{\Reals^{N-1}}\sum_{i=1}^N\left(N^2\hat V_i \eta(x) + \frac{N^2}{2}(V_i \log f_N)(x) \eta(x)- Nu_i \eta(x)\right)\bar q(z,s,\u;t,x)\hat V_iG(t^*-t,x^*-x)dx dt\\
 + \int_{\Reals^{N-1}}\eta(x)\bar q(z,s,\u;t_j,x)G(t^*-t_j,x^*-x)dx.
\end{multline}
One can readily obtain the following estimates on $G$
\begin{align}\label{align:heatest}
||G(t,\cdot)||_m = \gamma_mt^{\left(\frac{1}{m}-1\right)\frac{N-1}{2}}, \ \ 
||\partial_iG(t,\cdot)||_m = \gamma_mt^{\frac{N-1}{2m}-\frac{N}{2}},\; i = 1, \ldots N,
\end{align}
for $m>1,$
where $\gamma_m \in (0,\infty)$  depends only on $m$ and $||\cdot||_m$ denotes the $L^m$ norm on $\RR^N$. Write 
$$f^{\u,s}_1(x)=\L^{\u,s}\eta(x), f_2^{\u,s,(i)}(x) = N^2\hat V_i \eta(x) -\frac{N^2}{2}(V_i \log f_N)(x) \eta(x) - Nu_i \eta(x), f_3(x)= \eta(x).$$
 Note that $f^{\u,s}_1$ and $f_3$ are compactly supported smooth functions and $f_2^{\u,s,(i)}$ are compactly supported $C^1$ functions (as $\phi$, and hence $f_N$, is a $C^2$ function). Moreover,  the functions $f^{\u,s}_1, f_2^{\u,s,(i)}, f_3$ and their derivatives are uniform bounded for $(\u, s) \in \ball^N \times J$.

We will now show that for any $m>1$ and compact $K \subset \RR^{N-1}$, $I\subset (t_j, t_{j+1})$, there is a $\theta_m\in (0,\infty)$ such that
\begin{align}\label{align:locintmain}
\sup_{(s,\u,z) \in J\times \ball^N \times\RR^{N-1}}\sup_{t^* \in I}||\mathbb{I}_K(\cdot)\bar q(z,s,\u;t^*,\cdot)||_m \le \theta_m.
\end{align}
This will be done by a standard bootstrapping procedure and an iterative application of Young's inequality. Fix a $m\in (1, \frac{N-1}{N-2})$, then from \eqref{align:maineq}
we have by an application of Young's inequality, for any $t^* \in (t_j, t_{j+1})$
\begin{align*}
||\eta(\cdot)\bar q(z,s,\u;t^*,\cdot)||_m &\le \kappa_1\int_{t_j}^{t^*}||f^{\u,s}_1(\cdot)\bar q(z,s,\u;t,\cdot)||_1||G(t^*-t,\cdot)||_m dt\nonumber\\
&\quad + \kappa_1\int_{t_j}^{t^*}\sum_{i=1}^N||f_2^{\u,s,(i)}(\cdot)\bar q(z,s,\u;t,\cdot)||_1||\hat V_iG(t^*-t,\cdot)||_m dt\nonumber\\
&\quad + \kappa_1||f_3(\cdot)\bar q(z,s,\u;t_j,\cdot)||_1||G(t^*-t_j,\cdot)||_m.
\end{align*}
As $f^{\u,s}_1$, $f_2^{\u,s,(i)}$ and $f_3$ are compactly supported functions that do not depend on $z$ and are bounded by a finite constant that does not depend on $\u, s$; $\eta(x)=1$ for all $x \in K$; and $||\bar q(z,s,\u;t,\cdot)||_1=1$ for all $t \in (t_j, t_{j+1})$  and all $s,\u,z$,  we have from the estimates in \eqref{align:heatest} and the above equation,
that \eqref{align:locintmain} holds for any 
$m \in (1, \frac{N-1}{N-2})$. 
Next, for
%
%
%
$m,n \in (1, \frac{N-1}{N-2})$ and $l$ satisfying $1 + \frac{1}{l} = \frac{1}{m} + \frac{1}{n}$, applying  Young's inequality in \eqref{align:maineq} gives us, for any $t'_j\in (t_j, t^*)$
\begin{align*}
||\eta(\cdot)\bar q(z,s,\u;t^*,\cdot)||_l &\le \kappa_2\int_{t'_j}^{t^*}||f^{\u,s}_1(\cdot)\bar q(z,s,\u;t,\cdot)||_m||G(t^*-t,\cdot)||_n dt\nonumber\\
&\quad + \kappa_2\int_{t'_j}^{t^*}\sum_{i=1}^N||f_2^{\u,s,(i)}(\cdot)\bar q(z,s,\u;t,\cdot)||_m||\hat V_iG(t^*-t,\cdot)||_n dt\nonumber\\
&\quad + \kappa_2||f_3(\cdot)\bar q(z,s,\u;t_j,\cdot)||_m||G(t^*-t'_j,\cdot)||_n.
\end{align*}
From the estimate in \eqref{align:heatest} and that \eqref{align:locintmain} holds for every compact $I \subset (t_j, t_{j+1})$,
 the right-hand side in the above equation is seen to be bounded by a finite constant independent of $\u \in \ball^N, z \in \Reals^{N-1}$ $t^* \in I$, $s \in J$.
Thus we have proved \eqref{align:locintmain} for any $m \in (1,  \frac{N-1}{N-3})$. This bootstrapping argument can be applied repeatedly to establish \eqref{align:locintmain} for all $m>1.$

Now, applying H\"older's inequality in \eqref{align:maineq} with $m,n$ such that $n \in (1,\frac{N-1}{N-2})$ and $\frac{1}{m} + \frac{1}{n}=1$, we get that for any $x^* \in K$, $t^* \in I$,
and $t'_j> t_j$ such that $I \subset (t'_j, t_{j+1})$
\begin{align}\label{align:first}
|\bar q(z,s,\u;t^*,x^*)| &\le \kappa_3\int_{t'_j}^{t^*}||f^{\u,s}_1(\cdot)\bar q(z,s,\u;t,\cdot)||_m||G(t^*-t,\cdot)||_n dt\nonumber\\
&\quad + \kappa_3\int_{t'_j}^{t^*}\sum_{i=1}^N||f_2^{\u,s,(i)}(\cdot)\bar q(z,s,\u;t,\cdot)||_m||\hat V_iG(t^*-t,\cdot)||_n dt\nonumber\\
&\quad + \kappa_3||f_3(\cdot)\bar q(z,s,\u;t'_j,\cdot)||_m||G(t^*-t'_j,\cdot)||_n \le \kappa_4,
\end{align}
where $\kappa_4$ does not depend on $\u \in \ball^N, z \in \Reals^{N-1}$, $t^* \in I$, $s \in J$, $x^* \in K$, and the last bound holds since \eqref{align:locintmain} holds for all compact $K$ and $I$. 

To establish the existence of the derivatives $(\partial_i \bar q(z,s,\u;t,\cdot))_{1\le i \le N-1}$ and a result analogous to \eqref{align:first} for the derivatives, we will need to establish the H\"older continuity of $\bar q(z,s,\u;t,\cdot)$ on $K$. For $x_1, x_2 \in K$, we  use the representation \eqref{align:maineq} to write
\begin{multline*}
\bar q(z,s,\u;t^*,x_1) - \bar q(z,s,\u;t^*,x_2)\\
= \int_{t'_j}^{t^*}\int_{\Reals^{N-1}}f^{\u,s}_1(y)\bar q(z,s,\u;t,y)\left(G(t^*-t,x_1-y)-G(t^*-t,x_2-y)\right) dy dt\\
+ \int_{t'_j}^{t^*}\sum_{i=1}^N\int_{\Reals^{N-1}}f_2^{\u,s,(i)}(y) \bar q(z,s,\u;t,y)\left(\hat V_iG(t^*-t,x_1 - y) - \hat V_iG(t^*-t,x_2 - y)\right)dy dt\\
+ \int_{\Reals^{N-1}}f_3(y)\bar q(z,s,\u;t_j,y)\left(G(t^*-t'_j,x_1 - y) - G(t^*-t'_j,x_2 - y)\right)dy.
\end{multline*}
Take $n \in (1, \frac{N-1}{N-2})$ and $m$ such that $\frac{1}{m} + \frac{1}{n}=1$. Using the estimate \eqref{align:locintmain}
(for a compact set $\widetilde{K}$ which contains the support of $\eta$) and H\"older's inequality in the above representation, we have that for some $\kappa_5\in (0,\infty)$
and all $\u \in \ball^N, z \in \Reals^{N-1}$, $t^* \in I$, $s \in J,$
\begin{align}
|\bar q(z,s,\u;t^*,x_1) - \bar q(z,s,\u;t^*,x_2)| &\le \kappa_5\int_{t'_j}^{t^*}||G(t^*-t,x_1-\cdot)-G(t^*-t,x_2-\cdot)||_n dt\nonumber\\
&\quad + \kappa_5\int_{t'_j}^{t^*}\sum_{i=1}^N ||\hat V_iG(t^*-t,x_1 - \cdot) - \hat V_iG(t^*-t,x_2 - \cdot)||_n dt\nonumber\\
&\quad + \kappa_5||G(t^*-t'_j,x_1 - \cdot) - G(t^*-t'_j,x_2 - \cdot)||_n.\label{eq:holdest}
\end{align}
Now, by standard computations (for example, see  \cite[Chapter 4, Section 2]{ladsolura}), we see that there exist $\tilde \gamma_n\in (0,\infty)$  and $\theta>0$ such that for any  $x_1, x_2\in K$,
$t^* \in (t_j,t_{j+1})$ and $t \in [t_j, t^*)$,
\begin{align*}
||G(t^*-t,x_1-\cdot)-G(t^*-t,x_2-\cdot)||_n &\le \tilde \gamma_n|x_1 - x_2|^{\theta},\\
||\hat V_iG(t^*-t,x_1-\cdot)-\hat V_iG(t^*-t,x_2-\cdot)||_n &\le \tilde \gamma_n|x_1 - x_2|^{\theta}, \ \ 1 \le i\le N.
\end{align*}
This, in view of \eqref{eq:holdest} implies that for every compact $K\subset \RR^{N-1}$ there exists $\kappa_6\in(0,\infty)$ such that for all $\u \in \ball^N, z \in \Reals^{N-1}$ $t^* \in I$, $s \in J$, and $x_1, x_2 \in K$,
\begin{align}\label{align:Holder}
|\bar q(z,s,\u;t^*,x_1) - \bar q(z,s,\u;t^*,x_2)| & \le \kappa_6|x_1 - x_2|^{\theta}.
\end{align}
To see how H\"older continuity implies the existence of the derivatives $(\partial_i \bar q(z,s,\u;t,\cdot))_{1\le i \le N-1}$, note that although $\int_{t_j}^{t^*}||\partial_i\partial_kG(t^*-t, \cdot)||_1dt = \infty$, for any $\theta>0$, there is a $\gamma_{\theta} \in (0,\infty)$ such that for all $\bar t \in [t_j, t^*]$
\begin{align}\label{align:Gbound}
\int_{\bar t}^{t^*}\int_{\Reals^{N-1}}|z|^{\theta}|\partial_i\partial_kG(t^*-t, z)|dz dt= \gamma_{\theta}\int_{\bar t}^{t^*}\frac{1}{(t^*-t)^{1-\frac{\theta}{2}}}dt < \infty.
\end{align}
We will use this fact to prove the existence of the partial derivatives of $\bar q$. For $0<h < t^*-t'_j$, (where as before  $t'_j> t_j$ such that $I \subset (t'_j, t_{j+1})$) define the function
\begin{align*}
\bar q_h(z,s,\u;t^*,x^*) &\doteq \int_{t'_j}^{t^*-h}\int_{\Reals^{N-1}}f^{\u,s}_1(x)\bar q(z,s,\u;t,x)G(t^*-t,x^*-x)dx dt\nonumber\\
&\quad + \int_{t'_j}^{t^*-h}\int_{\Reals^{N-1}}\sum_{i=1}^Nf_2^{\u,s,(i)}(x)\bar q(z,s,\u;t,x)\hat V_iG(t^*-t,x^*-x)dx dt\nonumber\\
&\quad + \int_{\Reals^{N-1}}f_3(x)\bar q(z,s,\u;t_j,x)G(t^*-t'_j,x^*-x)dx.
\end{align*}
From \eqref{align:locintmain} it is clear that $\bar q_h(z,s,\u;t^*,\cdot)$ converges uniformly to $\bar q(z,s,\u;t^*,\cdot)$ on $K$ as $h \rightarrow 0$. By the smoothness of the map $(t,x) \mapsto G(t^*-t, x)$ in an open set containing $[t_j, t^*-h] \times K$ and using the estimate \eqref{align:locintmain} once again, we obtain for $1\le k \le N-1$,
\begin{multline*}
\partial_k\bar q_h(z,s,\u;t^*,x^*) =\int_{t_j}^{t^*-h}\int_{\Reals^{N-1}}f^{\u,s}_1(x)\bar q(z,s,\u;t,x)\partial_kG(t^*-t,x^*-x)dx dt\\
 + \int_{t_j}^{t^*-h}\int_{\Reals^{N-1}}\sum_{i=1}^N(f_2^{\u,s,(i)}(x)\bar q(z,s,\u;t,x) - f_2^{\u,s,(i)}(x^*)\bar q(z,s,\u;t,x^*))\partial_k \hat V_iG(t^*-t,x^*-x)dx dt\\
+ \int_{\Reals^{N-1}}f_3(x)\bar q(z,s,\u;t_j,x)\partial_kG(t^*-t_j,x^*-x)dx.
\end{multline*}
Here, the adjustment in the second term is justified because $\int_{\Reals^{N-1}}\partial_j\hat V_iG(t^*-t,x^*-x)dx=0$ as $G(t^*-t,\cdot)$ is a probability density. By the uniform H\"older continuity of $\bar q$ on $K$ for an arbitrary compact $K$ given in \eqref{align:Holder}, the $C^1$ property of $f_2$, and the estimate \eqref{align:Gbound}, we conclude that $\partial_k\bar q_h(z,s,\u;\cdot,\cdot)$ converges uniformly to the right-hand side of the above equation with $h=0$ on $I\times K$ as $h \downarrow 0$. From this and the continuity of $(t,x)\mapsto \bar q_h(z,s,\u;t,x)$, we conclude that $\partial_k\bar q(z,s,\u;\cdot,\cdot) $ exists, is continuous, and for every $(t^*,x^*) \in I\times K$ takes  the form,
\begin{multline}\label{align:derivativeform}
\partial_k\bar q(z,s,\u;t^*,x^*) =\int_{t'_j}^{t^*}\int_{\Reals^{N-1}}f^{\u,s}_1(x)\bar q(z,s,\u;t,x)\partial_kG(t^*-t,x^*-x)dx dt\\
 + \int_{t'_j}^{t^*}\int_{\Reals^{N-1}}\sum_{i=1}^N(f_2^{\u,s,(i)}(x)\bar q(z,s,\u;t,x) - f_2^{\u,s,(i)}(x^*)\bar q(z,s,\u;t,x^*))\partial_k \hat V_iG(t^*-t,x^*-x)dx dt\\
+ \int_{\Reals^{N-1}}f_3(x)\bar q(z,s,\u;t_j,x)\partial_kG(t^*-t'_j,x^*-x)dx.
\end{multline}
Using the above equation, \eqref{align:first}, and \eqref{align:Holder}, we obtain that for some $\kappa_7\in (0,\infty)$
and all $\u \in \ball^N, z \in \Reals^{N-1}$, $t^* \in I$, $s \in J$ and $x^* \in K$.
\begin{align*}
|\partial_k\bar q(z,s,\u;t^*,x^*)| &\le \kappa_7\int_{t'_j}^{t^*}\int_{\Reals^{N-1}}|\partial_kG(t^*-t,x^*-x)|dx dt\nonumber\\
&\quad + \kappa_7\int_{t'_j}^{t^*}\sum_{i=1}^N \int_{\Reals^{N-1}} |x^*-x|^{\theta}|\partial_k \hat V_iG(t^*-t,x^*-x)|dx dt\nonumber\\
&\quad + \kappa_7\int_{\Reals^{N-1}}|\partial_kG(t^*-t'_j,x^*-x)| dx.
\end{align*}
Finally, using \eqref{align:heatest} and \eqref{align:Gbound} in the above, we obtain for all compact $I,J,K$
\begin{align}\label{align:second}
\sup_{\u \in \ball^{N}}\sup_{z \in \Reals^{N-1}}\sup_{(t, s, x) \in I \times J \times K} |\partial_k\bar q(z,s,\u;t,x)| < \infty, \ \ 1 \le k \le N-1.
\end{align}
To deduce the existence of the second derivatives $(\partial_l \partial_k\bar q(z,s,\u;t,\cdot))_{1\le l,k \le N-1}$, note that using integration by parts, we can rewrite \eqref{align:derivativeform} as 
\begin{multline*}
\partial_k\bar q(z,s,\u;t^*,x^*) =-\int_{t'_j}^{t^*}\int_{\Reals^{N-1}}\partial_k\left(f^{\u,s}_1(\cdot)\bar q(z,s,\u;t,\cdot)\right)(x)G(t^*-t,x^*-x)dx dt\\
 - \int_{t'_j}^{t^*}\int_{\Reals^{N-1}}\sum_{i=1}^N\partial_k\left(f_2^{\u,s,(i)}(\cdot)\bar q(z,s,\u;t,\cdot)\right)(x) \hat V_iG(t^*-t,x^*-x)dx dt\\
- \int_{\Reals^{N-1}}\partial_k\left(f_3(\cdot)\bar q(z,s,\u;t_j,\cdot)\right)(x)G(t^*-t'_j,x^*-x)dx.
\end{multline*}
Now, along the same line of argument used to prove the existence of $(\partial_k \bar q(z,s,\u;t,\cdot))_{1\le k \le N-1}$, we prove the H\"older continuity of the derivatives $(\partial_k\bar q(z,s,\u;t,\cdot))_{1\le k \le N-1}$ and use it to deduce that $(\partial_l \partial_k\bar q(z,s,\u;\cdot,\cdot))_{1\le l,k \le N-1}$ exist, are continuous, and satisfy 
\begin{multline}\label{align:doubleform}
\partial_l\partial_k\bar q(z,s,\u;t^*,x^*) =-\int_{t'_j}^{t^*}\int_{\Reals^{N-1}}\partial_k\left(f^{\u,s}_1(\cdot)\bar q(z,s,\u;t,\cdot)\right)(x)\partial_lG(t^*-t,x^*-x)dx dt\\
 - \int_{t'_j}^{t^*}\int_{\Reals^{N-1}}\sum_{i=1}^N\left(\partial_k\left(f_2^{\u,s,(i)}(\cdot)\bar q(z,s,\u;t,\cdot)\right)(x)\right.\\
\hspace{5cm} \left. - \partial_k\left(f_2^{\u,s,(i)}(\cdot)\bar q(z,s,\u;t,\cdot)\right)(x^*)\right) \partial_l\hat V_iG(t^*-t,x^*-x)dx dt\\
- \int_{\Reals^{N-1}}\partial_k\left(f_3(\cdot)\bar q(z,s,\u;t_j,\cdot)\right)(x)\partial_lG(t^*-t'_j,x^*-x)dx,
\end{multline}
for $1\leq l,k\leq N-1.$
 Next, using \eqref{align:first}, \eqref{align:second}, and the H\"older estimates for $\partial_lG$ and $\partial_l\hat V_iG$ in \eqref{align:doubleform}, we conclude the following for all compact $I,J,K$
\begin{align}\label{align:third}
\sup_{\u \in \ball^{N}}\sup_{z \in \Reals^{N-1}}\sup_{(t, s, x) \in I \times J \times K} |\partial_l\partial_k\bar q(z,s,\u;t,x)| < \infty, \ \ 1 \le l, k \le N-1.
\end{align}
Finally, for the existence and regularity of the time derivative $\partial_t \bar q$, we rewrite \eqref{align:maineq} for $x^* \in K$ as
\begin{align*}
\bar q(z,s,\u;t^*,x^*) &=\int_{t'_j}^{t^*}\int_{\Reals^{N-1}}f^{\u,s}_1(x)\bar q(z,s,\u;t,x)G(t^*-t,x^*-x)dx dt\nonumber\\
&\quad - \int_{t'_j}^{t^*}\int_{\Reals^{N-1}}\sum_{i=1}^N\hat V_i\left(f_2^{\u,s,(i)}(\cdot)\bar q(z,s,\u;t,\cdot)\right)(x)G(t^*-t,x^*-x)dx dt\nonumber\\
&\quad + \int_{\Reals^{N-1}}f_3(x)\bar q(z,s,\u;t_j,x)G(t^*-t'_j,x^*-x)dx.
\end{align*}
Using this representation and the H\"older continuity of the derivatives $(\partial_k \bar q(z,s,\u;t,\cdot))_{1\le k \le N-1}$, we can argue along the same lines as before to derive the existence and continuity of $\partial_t \bar q$ and the following representation:
\begin{multline}\label{align:timeform}
\partial_t\bar q(z,s,\u;t^*,x^*) =\int_{t'_j}^{t^*}\int_{\Reals^{N-1}}\left(f^{\u,s}_1(x)\bar q(z,s,\u;t,x)-f^{\u,s}_1(x^*)\bar q(z,s,\u;t,x^*)\right)\partial_tG(t^*-t,x^*-x)dx dt\\
 - \int_{t'_j}^{t^*}\int_{\Reals^{N-1}}\sum_{i=1}^N\left(\hat V_i\left(f_2^{\u,s,(i)}(\cdot)\bar q(z,s,\u;t,\cdot)\right)(x)-\hat V_i\left(f_2^{\u,s,(i)}(\cdot)\bar q(z,s,\u;t,\cdot)\right)(x^*)\right)\partial_tG(t^*-t,x^*-x)dx dt\\
+ \int_{\Reals^{N-1}}\left(f_3(x)\bar q(z,s,\u;t'_j,x) - f_3(x^*)\bar q(z,s,\u;t'_j,x^*)\right)\partial_tG(t^*-t'_j,x^*-x)dx\\
+ f^{\u,s}_1(x^*)\bar q(z,s,\u;t^*,x^*) - \sum_{i=1}^N\hat V_i\left(f_2^{\u,s,(i)}(\cdot)\bar q(z,s,\u;t^*,\cdot)\right)(x^*).
\end{multline}
Once more, using \eqref{align:first}, \eqref{align:second} and the H\"older estimate for $\partial_tG$ in \eqref{align:timeform}, we conclude for all compact $I,J,K$
\begin{align}\label{align:fourth}
\sup_{\u \in \ball^{N}}\sup_{z \in \Reals^{N-1}}\sup_{(t, s, x) \in I \times J \times K} |\partial_t\bar q(z,s,\u;t,x)| < \infty.
\end{align}
This finishes the proof of \eqref{eq:mainestim}  and therefore of the lemma.
\end{proof}
\subsection{Proofs of Lemma \ref{entlemma} and Lemma \ref{Ilemma}}\label{tech}
To avoid cumbersome notation,  in this section we will write $\mathcal{L}$ for the operator $\mathcal{L}^N$ introduced in
\eqref{eq:defln} and consider for a function $\eta:[0,T]\to \RR^N$ the `controlled generator' $\L^{\eta}$ defined as
\begin{equation}\label{equation:genc}
(\L^{\eta}f)(s,x) \doteq \L f(x) - N\sum_{i=1}^N \eta_{i+1}(s) (V_i f)(x), \; f: \RR^N \to \RR, \; (s,x)\in [0,T]\times \RR^N,
\end{equation}
where $\eta=(\eta_i)_{i=1}^N$ and $\eta_{N+1}=\eta_1$.

Let for $t\ge 0$, $\bar{p}_N(t,x)$ be the density with respect to the measure $\Phi^N(dx)$ of $\bar X^N(t)$, given as
in Lemma \ref{reglemma}.  Recall from \eqref{Hzeroeq} that $\Pi^N(dx) = \bar p_N(0,dx) \Phi^N(dx)$ satisfies the  relative entropy bound for all $N\in \NN$:
\begin{equation}
\label{initest}H_N(0) = \int_{\mathbb{R}^N} \bar{p}_N(0,x)\log(\bar{p}_N(0,x)) \Phi^N(dx) \leq C_0N.
\end{equation}

\begin{proof}[Proof of Lemma \ref{entlemma}] 
Let $\QQ_{\Phi^N}$ denote the probability law of $X^N$ on $C([0,T]:\RR^N)$. We can disintegrate $\QQ_{\Phi^N}$	as
$\QQ_{\Phi^N}(d\om) = \Phi^N(d\om_0)\QQ_{\om_0}(d\om)$.
Denote the probability law of $\bar X^N$ on $C([0,T]:\RR^N)$ by $\bar \QQ_{\Pi^N}$. Then $\bar \QQ_{\Pi^N}$ can be disintegrated as
$\bar \QQ_{\Pi^N}(d\om) = \Pi^N(d\om_0) \bar \QQ_{\om_0}(d\om)$.
By chain rule of relative entropies
$$R(\bar \QQ_{\Pi^N} \| \QQ_{\Phi^N}) = R(\Pi^N\|\Phi^N) + \int_{\RR^N} R(\bar \QQ_x \| \QQ_x) \Pi^N(dx).$$
By \eqref{initest}
$R(\Pi^N\|\Phi^N) \le C_0N$. Also, by Girsanov theorem
\begin{align*}
\int_{\RR^N} R(\bar \QQ_x \| \QQ_x) \Pi^N(dx) &= R(\bar \QQ_{\boldsymbol{\cdot}}\otimes \Pi^N \| \QQ_{\boldsymbol{\cdot}}\otimes \Pi^N)\\
&	\le R(\bar \PP_{\Pi^N}\| \PP_{\Pi^N}) \le \bar \EE_{\Pi^N}\left(\frac{1}{2}\sum_{i=1}^N\int_0^T \psi_i^2(s)ds\right) \le \frac{1}{2}C_0N.
\end{align*}
Finally, denoting by $\om(t)$ the coordinate projection on $C([0,T]:\RR^N)$ at time  $t$,
$$H_N(t) = R(\bar{\mathbb{Q}}_{\Pi^N}(t) || \Phi^N) = R(\bar{\mathbb{Q}}_{\Pi^N} \circ (\om(t))^{-1} || \QQ_{\Phi^N} \circ (\om(t))^{-1})
\le R(\bar{\mathbb{Q}}_{\Pi^N}  || \QQ_{\Phi^N} ) \le \frac{3}{2}C_0N$$
 which completes the proof of Lemma \ref{entlemma}.
 \end{proof}

The following lemma shows that if, for a fixed $N$, the initial density of the controlled process $\bar p_N(0, \cdot)$
is bounded, then the densities  $\bar{p}_N(t,\cdot)$ are uniformly bounded in $L^2(\Phi^N)$, over $[0,T]$.
\begin{lemma}\label{ltwo}
Fix $N \in \NN$ and suppose there is a  $M\in (0,\infty)$ such that $\bar{p}_N(0,x) \le M$ for all $x \in \mathbb{R}^N$. Then there exists a  $C^*= C^*(N,M) \in (0,\infty)$  such that for all $t \in [0,T]$,
$
\int_{\mathbb{R}^N}\bar{p}_N^2(t,x) \Phi^N(dx) \le C^*. 
$
\end{lemma}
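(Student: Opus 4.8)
The plan is to bypass a direct PDE energy estimate and instead obtain the bound by a Girsanov change of measure that reduces everything to the uncontrolled diffusion \eqref{align:uncontroleq}, for which $\Phi^N$ is a reversible invariant probability measure; the boundedness of the simple control $\psi^N$ (the constant $C$ in \eqref{controlas}, which gives $|\psi^N(s)|^2\le NC^2$ for all $s$) is what makes the relevant Radon--Nikodym weights have all moments. Since $\int_{\RR^N}\bar p_N^2(0,x)\,\Phi^N(dx)\le M\int_{\RR^N}\bar p_N(0,x)\,\Phi^N(dx)=M$, it suffices to treat $t\in(0,T]$.

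First I would set up the change of measure. Writing $d\bar B^N_i=dB_i+\psi^N_i\,ds$, put $\mathcal{E}_t\doteq\exp\bigl(\int_0^t\psi^N(s)\cdot dB(s)+\tfrac12\int_0^t|\psi^N(s)|^2\,ds\bigr)=\exp\bigl(\int_0^t\psi^N(s)\cdot d\bar B^N(s)-\tfrac12\int_0^t|\psi^N(s)|^2\,ds\bigr)$. Because $\psi^N$ is bounded, all the exponential processes involved are genuine martingales, and by Girsanov's theorem there is a probability measure $\widetilde{\PP}$ with $d\widetilde{\PP}/d\bar\PP|_{\bar\clf_t}=\mathcal{E}_t^{-1}$ under which $\bar B^N$ is a Brownian motion; hence under $\widetilde{\PP}$ the controlled process $\bar X^N$ has the law of the uncontrolled diffusion \eqref{align:uncontroleq} started from $\Pi^N$. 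Consequently, for every Borel set $A\subset\RR^N$ and $t\in[0,T]$, $\bar\QQ_{\Pi^N}(t)(A)=\bar\PP(\bar X^N(t)\in A)=\widetilde{\EE}\bigl[\mathbb{I}_A(\bar X^N(t))\,\mathcal{E}_t\bigr]$.

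Next I would estimate the right-hand side by H\"older's inequality with conjugate exponents $p\in(1,2)$ and $q=p/(p-1)>2$ (e.g. $p=\tfrac43$, $q=4$): $\bar\QQ_{\Pi^N}(t)(A)\le\widetilde{\PP}(\bar X^N(t)\in A)^{1/p}\,(\widetilde{\EE}[\mathcal{E}_t^q])^{1/q}$. Factoring $\mathcal{E}_t^q$ as an exponential martingale times $\exp\bigl(\tfrac{q^2-q}{2}\int_0^t|\psi^N|^2\bigr)$ gives $\widetilde{\EE}[\mathcal{E}_t^q]\le\exp\bigl(\tfrac{q(q-1)}{2}NC^2T\bigr)$. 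For the other factor, writing $P_t$ for the Markov semigroup of the uncontrolled diffusion and using that $\Phi^N$ is reversible for $P_t$, that $\Pi^N(dx)=\bar p_N(0,x)\Phi^N(dx)$ with $\bar p_N(0,\cdot)\le M$, and the invariance $\int P_t\mathbb{I}_A\,d\Phi^N=\Phi^N(A)$, one obtains $\widetilde{\PP}(\bar X^N(t)\in A)=\int\bar p_N(0,y)(P_t\mathbb{I}_A)(y)\,\Phi^N(dy)\le M\,\Phi^N(A)$. Combining, there is a constant $K=K(N,M)$ (depending also on the fixed data $C,T$) with $\bar\QQ_{\Pi^N}(t)(A)\le K\,\Phi^N(A)^{1/p}$ for all Borel $A$ and all $t\in[0,T]$. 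Finally I would turn this measure-domination bound into the $L^2$ estimate by a layer-cake argument: applying it with $A_\lambda\doteq\{x:\bar p_N(t,x)>\lambda\}$ and using $\bar\QQ_{\Pi^N}(t)(A_\lambda)\ge\lambda\,\Phi^N(A_\lambda)$ gives $\Phi^N(A_\lambda)\le(K/\lambda)^q$, whence $\int\bar p_N^2(t,x)\,\Phi^N(dx)=\int_0^\infty 2\lambda\,\Phi^N(A_\lambda)\,d\lambda\le K^2+2K^2/(q-2)$, the tail integral converging exactly because $q>2$; one may take $C^*\doteq\max\{M,\,K^2q/(q-2)\}$.

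The only place a little care is needed is the H\"older step: one must use a strict moment $q>2$ of the exponential weight rather than plain Cauchy--Schwarz ($q=2$), for which the layer-cake integral diverges logarithmically — this is harmless precisely because $\psi^N$ is bounded. Heuristically the statement is the energy inequality $\tfrac{d}{dt}\int\bar p_N^2\,\Phi^N\le|\psi^N(t)|^2\int\bar p_N^2\,\Phi^N$, obtained from the forward equation for $\bar p_N$ (legitimate by Lemma \ref{reglemma}), the self-adjointness of $\mathcal{L}^N$ on $L^2(\Phi^N)$ and Young's inequality applied to the cross term, with Gr\"onwall then giving the sharper constant $M\exp\bigl(\int_0^T|\psi^N(s)|^2\,ds\bigr)$; the Girsanov route above is used only to avoid having to control the boundary terms at infinity in that integration by parts, and so yields a fully rigorous proof with minimal fuss.
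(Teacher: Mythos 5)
Your proposal is correct and follows the same core strategy as the paper: a Girsanov change of measure reducing the controlled diffusion to the uncontrolled one, a moment bound on the exponential Radon--Nikodym weight using the almost-sure bound $|\psi^N_i|\le C$ from \eqref{controlas}, and the observation that under the uncontrolled law started from a density $\bar p_N(0,\cdot)\le M$ against $\Phi^N$, the time-$t$ marginal is dominated by $M\Phi^N$ (via reversibility/stationarity). Where you diverge is in how the resulting moment inequality is converted into the $L^2$ bound: the paper applies Cauchy--Schwarz (i.e.\ $q=2$) to get $\int g\,\bar p_N(t,\cdot)\,d\Phi^N\le M^{1/2}\kappa_N\bigl(\int g^2\,d\Phi^N\bigr)^{1/2}$ for all nonnegative $g$, then tests against the truncated density $g=\bar p_N(t,\cdot)\wedge L$, divides both sides by $\bigl(\int(\bar p_N\wedge L)^2 d\Phi^N\bigr)^{1/2}$, and sends $L\to\infty$ by monotone convergence; this trick works at the marginal exponent and avoids layer-cake entirely. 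You instead take $q>2$ in H\"older, derive the measure-domination bound $\bar\QQ_{\Pi^N}(t)(A)\le K\,\Phi^N(A)^{1/p}$, and integrate it out by layer-cake --- correctly noting that your route genuinely needs $q>2$ since the tail integral diverges at $q=2$. Both are valid; the paper's truncation argument is a touch slicker (no strict moment needed, and you never have to introduce the sublevel sets $A_\lambda$), while your version makes the measure domination explicit. So: same idea, different closing step; neither has a gap.
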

\begin{proof}
	Recall the system $\sys_{\Pi^N} \doteq (\bar \clv, \bar \clf, \{\bar \clf_t\}, \bar \PP, \bar X^N(0), \bfB^N)$  from Section \ref{subsec:varrep} on which the process $\{\bar X^N(t)\}$ is given. Also recall that we denote the measure $\bar \PP$
	as $\bar \PP_{\Pi^N}$ to emphasize its dependence on the initial measure $\Pi^N$.
	Define a probability measure $ \PP_{\Pi^N}$ on $(\bar \clv, \bar \clf)$ through the relation.
	\begin{equation*}
	\frac{d\mathbb{P}_{\Pi^N}}{d\bar{\mathbb{P}}_{\Pi^N}} = \exp\left\{-\sum_{i=1}^N \int_0^T \psi_i^N(s) dB_i(s) + \frac{1}{2}\sum_{i=1}^N \int_0^T|\psi_i^N(s)|^2ds\right\}.
	\end{equation*}	
	Recalling that $\psi^N \in \cla^N_s(\sys_{\Pi^N})$, we have for some $\kappa_N\in (0,\infty)$
	and all non-negative measurable $g:\RR^N\to \RR$
	$$\bar{\mathbb{E}}_{\Pi^N}(g(\bar X^N(t))) = \mathbb{E}_{\Pi^N}\left(g(\bar X^N(t)) \frac{d\bar{\mathbb{P}}_{\Pi^N}}{d\mathbb{P}_{\Pi^N}}\right) \le \kappa_N \left(\mathbb{E}_{\Pi^N}(g(\bar X^N(t)))^2\right)^{1/2}.$$
	Also, since $\QQ_{\Phi^N}$ is the probability measure on $C([0,T]:\RR^N)$ induced by $X^N$, we have by Girsanov's theorem
	\begin{align*}\mathbb{E}_{\Pi^N}(g(\bar X^N(t)))^2 &= \int_{C([0,T]:\RR^N)}g^2(\om(t)) \bar p_N(0, \om(0))\QQ_{\Phi^N}(d\om)\\
	&\le M \int_{\RR^N} g^2(\om(t)) \QQ_{\Phi^N}(d\om) = M \int_{\RR^N} g^2(x) \Phi^N(dx),\end{align*}
	where the last equality is from the stationarity of $\QQ_{\Phi^N}$. 
	Thus for all non-negative $g$
	$$\int_{\RR^N} g(x) \bar p_N(t,x) \Phi^N(dx) \le M^{1/2} \kappa_N \left(\int_{\RR^N} g^2(x) \Phi^N(dx)\right)^{1/2}.$$
	Taking $g(x) = \bar p_N(t,x) \wedge L$ for fixed $L<\infty$, we have
	\begin{align*}\int_{\RR^N}  (\bar p_N(t,x)\wedge L)^2 \Phi^N(dx)
	&\le \int_{\RR^N}  (\bar p_N(t,x)\wedge L)\bar p_N(t,x) \Phi^N(dx) \\
	&\le M^{1/2} \kappa_N \left(\int_{\RR^N}  (\bar p_N(t,x)\wedge L)^2 \Phi^N(dx)\right)^{1/2}.
\end{align*}
	The result now follows on dividing by $\left(\int_{\RR^N}  (\bar p_N(t,x)\wedge L)^2 \Phi^N(dx)\right)^{1/2}$ and then sending $L\to \infty$.

\end{proof}

\begin{proof}[Proof of Lemma \ref{Ilemma}]
By Lemma \ref{reglemma}, we know that $\bp_N(\cdot,\cdot)$ is $C^{1,2}$ for $\{t \in(t_j, t_{j+1}): 1\le j \le K\}$ and $x \in \Reals^N.$ Therefore, we will assume without loss of generality that $\bar{p}_N(t,x)$ is $C^{1,2}$ for $t \in (0,T)$ and $x \in \Reals^N$. In the general case, the same proof can be employed by applying It\^{o}'s formula on the time intervals $(t_j, t_{j+1})$ to give us the desired result.

To avoid cumbersome notation, we will suppress the $N$ dependence in the notation and write the functional $I_N$ as $I$. The first step will be to show that
\begin{align}\label{align:Ifin}
\int_0^T I(\bar{p}_N(s, \cdot))ds<\infty.
\end{align} 
 First assume that there exists $M>0$ such that $\bar{p}_N(0,x)) \le M$ for all $x \in \mathbb{R}^N$.
We begin by observing that one can find an increasing sequence of smooth functions $(\eta_n)_{n \ge 1}$ on $\Reals^N$ with compact support such that, (i) $\eta_n(x) = 1$ when $|x| \le n$, (ii) $0<\eta(x) \le 1$ when $|x| < n+1$ and $\eta_n(x) = 0$ when $|x| \ge n+1$, (iii) there exists $\gamma(\eta) \in (0,\infty)$  such that for all  $n,N \in \mathbb{N}$, $|\partial_i \eta_n(x)| \le \gamma(\eta)$, $|\partial_i \partial_j\eta_n(x)| \le \gamma(\eta)$ for all $x \in \mathbb{R}^N$ and all $1 \le i,j \le N$.
 In what follows we write $\bar X^N_t$ as $\bar X_t$.
Define the `localized entropy'
$$
H_{n,N}(t) \coloneqq \int_{\Reals^N} \bar{p}_N(t,x) \log (\bar{p}_N(t,x)) \eta_n(x)\Phi^N(dx) = \bar{\mathbb{E}}_{\Pi^N}\left(\log(\bar{p}_N(t,\bX_t))\eta_n(\bX_t)\right), \ \ 0 \le t \le T.
$$
For $\epsilon>0$, define $\bar{p}_N^{(\epsilon)}(t,x) = \bar{p}_N(t, x) + \epsilon$ and
$$
H^{(\epsilon)}_{n,N}(t) \coloneqq \int_{\Reals^N} \bar{p}_N(t,x) \log (\bar{p}_N^{(\epsilon)}(s,x)) \eta_n(x)\Phi^N(dx) = \bar{\mathbb{E}}_{\Pi^N}\left(\log(\bar{p}_N^{(\epsilon)}(t,\bX_t))\eta_n(\bX_t)\right), \ \ 0 \le t \le T.
$$
By the continuity of $\bar{p}_N$ and the  monotone convergence theorem, for each $t$,
$
\lim_{\epsilon \rightarrow 0}H^{(\epsilon)}_{n,N}(t) = H_{n,N}(t).
$
As $\bar{p}_N$ is $C^{1,2}$ and $\eta_n$ is smooth, we can apply It\^{o}'s formula to $\log(\bar{p}_N^{(\epsilon)}(t,\bX_t))\eta_n(\bX_t)$ to obtain for $0 < t_1 < t_2 <T$,
\begin{multline*}
H^{(\epsilon)}_{n,N}(t_2) - H^{(\epsilon)}_{n,N}(t_1)\\
= \bar{\mathbb{E}}_{\Pi^N}\left(\int_{t_1}^{t_2}(\partial_s + \L^{\psi}) \left(\log \bar{p}_N^{(\epsilon)}(\cdot, \cdot) \eta_n(\cdot)\right)(s, \bX_s)ds + \int_{t_1}^{t_2}\sum_{i=1}^N\eta_n(\bX_s) \frac{V_i\bar{p}_N^{(\epsilon)}(s, \bX_s)}{\bar{p}_N^{(\epsilon)}(s, \bX_s)}dB_i(s)\right),
\end{multline*}
where $\left(\log \bar{p}_N^{(\epsilon)}(\cdot, \cdot) \eta_n(\cdot)\right)(s, x) = \log \bar{p}_N^{(\epsilon)}(s,x) \eta_n(x)$. As $\bar{p}_N^{(\epsilon)}(t,x) \ge \epsilon$ for all $(t,x)$, the local martingale part in the above equation is, in fact, a  martingale and we deduce
\begin{align}\label{align:enteq}
H^{(\epsilon)}_{n,N}(t_2) - H^{(\epsilon)}_{n,N}(t_1) &= \bar{\mathbb{E}}_{\Pi^N}\left(\int_{t_1}^{t_2}(\partial_s + \L^{\psi})\left(\log \bar{p}_N^{(\epsilon)}(\cdot, \cdot) \eta_n(\cdot)\right)(s, \bX_s)ds\right).
\end{align}
By the entropy bound obtained in Lemma \ref{entlemma}, there is a  $\gamma(\bar p)\in (0,\infty)$  such that
for all $t \in [0,T]$,  $N\in \mathbb{N}$, and $M\in (0,\infty)$ (recall that $M$ is the bound on the initial density)
\begin{align}\label{align:absen}
\int_{\Reals^N}\bar{p}_N(t,x) |\log (\bar{p}_N(t,x))| \Phi^N(dx) \le \gamma(\bar p) N.
\end{align} 
In fact $\gamma(\bar p)$ can be taken to be $3C_0/2$ where $C_0$ is as in \eqref{Hzeroeq}.
 Therefore, by the dominated convergence theorem,
\begin{align}\label{hlim}
\lim_{n \rightarrow \infty}\lim_{\epsilon \rightarrow 0}(H_{n,N}^{(\epsilon)}(t_2)-H_{n,N}^{(\epsilon)}(t_1))=\lim_{n \rightarrow \infty}(H_{n,N}(t_2)-H_{n,N}(t_1)) = H_N(t_2)-H_N(t_1),
\end{align}
where $H_N$ is as defined in \eqref{initentest}.
Recalling that $\L^{\psi}=\L - N\sum_{i=1}^{N} \psi_{i+1} V_i$, we write
\begin{align}
&\bar{\mathbb{E}}_{\Pi^N}\left(\int_{t_1}^{t_2}(\partial_s + \L^{\psi})\left(\log \bar{p}_N^{(\epsilon)}(\cdot, \cdot) \eta_n(\cdot)\right)(s, \bX_s)ds\right)\nonumber\\
&\quad=\bar{\mathbb{E}}_{\Pi^N}\left(\int_{t_1}^{t_2}\partial_s  \left(\log \bar{p}_N^{(\epsilon)}(\cdot, \cdot) \eta_n(\cdot)\right)(s, \bX_s)\right) + \bar{\mathbb{E}}_{\Pi^N}\left(\int_{t_1}^{t_2}\L\left(\log \bar{p}_N^{(\epsilon)}(\cdot, \cdot) \eta_n(\cdot)\right)(s, \bX_s)ds\right)\nonumber\\
&\quad - \bar{\mathbb{E}}_{\Pi^N}\left(N\int_{t_1}^{t_2}\sum_{i=1}^N \psi_{i+1}(s) V_i\left(\log \bar{p}_N^{(\epsilon)}(\cdot, \cdot) \eta_n(\cdot)\right)(s, \bX_s)ds\right).\label{eq:eq435}
\end{align}      
For the middle term on the right side we have
\begin{align*}
&\bar{\mathbb{E}}_{\Pi^N}\left(\L\left(\log \bar{p}_N^{(\epsilon)}(\cdot, \cdot) \eta_n(\cdot)\right)(s, \bX_s)\right)\\
&= \bar{\mathbb{E}}_{\Pi^N}\left(\eta_n(\bX_s)\frac{\L \bar{p}_N^{(\epsilon)} (s, \bX_s)}{\bar{p}_N^{(\epsilon)}(s,\bX_s)} + (\log \bar{p}_N^{(\epsilon)}(s, \bX_s)) \L\eta_n(\bX_s) + N^2 \sum_{i=1}^N V_i\eta_n(\bX_s) \frac{V_i \bar{p}_N^{(\epsilon)}(s,\bX_s)}{\bar{p}_N^{(\epsilon)}(s,\bX_s)}\right)\\
 &\quad\quad- \bar{\mathbb{E}}_{\Pi^N}\left(\frac{N^2}{2}\eta_n(\bX_s)\sum_{i=1}^N \frac{(V_i \bar{p}_N^{(\epsilon)})^2(s,\bX_s)}{(\bar{p}_N^{(\epsilon)})^2(s,\bX_s)}\right)
\end{align*}
and for the third term in \eqref{eq:eq435} we have
\begin{align*}
\bar{\mathbb{E}}_{\Pi^N}\left(N\sum_{i=1}^N \psi_{i+1}(s) V_i\left(\log \bar{p}_N^{(\epsilon)}(\cdot, \cdot) \eta_n(\cdot)\right)(s, \bX_s)\right)&=\bar{\mathbb{E}}_{\Pi^N}\left(N\sum_{i=1}^N \psi_{i+1}(s) (V_i\eta_n)(\bX_s)\log \bar{p}_N^{(\epsilon)}(s, \bX_s)\right)\\
&\quad + \bar{\mathbb{E}}_{\Pi^N}\left(N\sum_{i=1}^N \psi_{i+1}(s) \eta_n(\bX_s)\frac{V_i\bar{p}_N^{(\epsilon)}(s,\bX_s)}{\bar{p}_N^{(\epsilon)}(s,\bX_s)}\right).
\end{align*}
From the above expressions, we can write 
$$
\bar{\mathbb{E}}_{\Pi^N}\left(\int_{t_1}^{t_2}(\partial_s + \L^{\psi})\left(\log \bar{p}_N^{(\epsilon)}(\cdot, \cdot) \eta_n(\cdot)\right)(s, \bX_s)ds\right) \doteq T_1^{(\epsilon)}(n) + T_2^{(\epsilon)}(n),
$$
where
\begin{align}
T_1^{(\epsilon)}(n)&=\bar{\mathbb{E}}_{\Pi^N}\left(\int_{t_1}^{t_2}\partial_s  \left(\log \bar{p}_N^{(\epsilon)}(\cdot, \cdot) \eta_n(\cdot)\right)(s, \bX_s)\right)\nonumber\\
&\quad + \int_{t_1}^{t_2}\bar{\mathbb{E}}_{\Pi^N}\left(\eta_n(\bX_s)\frac{\L \bar{p}_N^{(\epsilon)}(s, \bX_s)}{\bar{p}_N^{(\epsilon)}(s,\bX_s)} + (\log \bar{p}_N^{(\epsilon)}(s, \bX_s)) \L\eta_n(\bX_s)\right)ds\nonumber\\
 &\quad - \int_{t_1}^{t_2}\bar{\mathbb{E}}_{\Pi^N}\left(N\sum_{i=1}^N \psi_{i+1}(s) (V_i\eta_n)(\bX_s)\log \bar{p}_N^{(\epsilon)}(s, \bX_s)\right)ds \nonumber\\
&\quad + \int_{t_1}^{t_2}\bar{\mathbb{E}}_{\Pi^N}\left(N^2 \sum_{i=1}^N V_i\eta_n(\bX_s) \frac{V_i \bar{p}_N^{(\epsilon)}(s,\bX_s)}{\bar{p}_N^{(\epsilon)}(s,\bX_s)}\right)ds \label{eq:t1epsn}
\end{align}
and
\begin{align}
T_2^{(\epsilon)}(n)&= -\int_{t_1}^{t_2}\bar{\mathbb{E}}_{\Pi^N}\left(\frac{N^2}{2}\eta_n(\bX_s)\sum_{i=1}^N \frac{(V_i \bar{p}_N^{(\epsilon)})^2(s,\bX_s)}{(\bar{p}_N^{(\epsilon)})^2(s,\bX_s)}\right)ds\nonumber\\
 &\quad - \int_{t_1}^{t_2}\bar{\mathbb{E}}_{\Pi^N}\left(N\sum_{i=1}^N \psi_{i+1}(s) \eta_n(\bX_s)\frac{V_i\bar{p}_N^{(\epsilon)}(s,\bX_s)}{\bar{p}_N^{(\epsilon)}(s,\bX_s)}\right)ds.\label{eq:t2epsn}
\end{align}
We will now show that 
\begin{equation}\label{tone}
	\limsup_{n \rightarrow \infty} \lim_{\epsilon \rightarrow 0} T_1^{(\epsilon)}(n) \le 0.
\end{equation}
As $\int_{\Reals^N}\bar{p}_N(t,x)\Phi^N(dx)=1$ for all $t \in [0,T]$, by dominated convergence theorem,
\begin{align}
&\lim_{n \rightarrow \infty} \lim_{\epsilon \rightarrow 0}\bar{\mathbb{E}}_{\Pi^N}\left(\int_{t_1}^{t_2}\partial_s  \left(\log \bar{p}_N^{(\epsilon)}(\cdot, \cdot) \eta_n(\cdot)\right)(s, \bX_s)\right)\nonumber \\
&\quad  = \lim_{n \rightarrow \infty}\lim_{\epsilon \rightarrow 0}\int_{t_1}^{t_2} \bar{\mathbb{E}}_{\Pi^N}\left(\frac{\partial_s \bar{p}_N^{(\epsilon)}(\cdot, \cdot)}{\bar{p}_N^{(\epsilon)}(\cdot,\cdot)} \eta_n(\cdot)(s, \bX_s)\right)\nonumber \\
&\quad = \lim_{n \rightarrow \infty}\lim_{\epsilon \rightarrow 0}\int_{t_1}^{t_2}\int_{\Reals^N} \partial_s \bar{p}_N(s,x)\eta_n(x) \frac{\bar{p}_N(s,x)}{\bar{p}_N^{(\epsilon)}(s,x)}\Phi^N(dx) ds\nonumber\\
&\quad = \lim_{n \rightarrow \infty}\int_{t_1}^{t_2}\int_{\Reals^N} \partial_s \bar{p}_N(s,x)\eta_n(x) \mathbb{I}_{\{\bar{p}_N(s,x) >0\}} \Phi^N(dx) ds\nonumber \\
&\quad = \lim_{n \rightarrow \infty}\int_{t_1}^{t_2}\int_{\Reals^N} \partial_s \bar{p}_N(s,x)\eta_n(x) \Phi^N(dx) ds\nonumber \\
&\quad= \lim_{n \rightarrow \infty} \left(\int_{\Reals^N}  \bar{p}_N(t_2,x)\eta_n(x) \Phi^N(dx) - \int_{\Reals^N}\bar{p}_N(t_1,x)\eta_n(x) \Phi^N(dx)\right)\nonumber \\
&\quad =\int_{\Reals^N}  \bar{p}_N(t_2,x) \Phi^N(dx) - \int_{\Reals^N}\bar{p}_N(t_1,x) \Phi^N(dx) = 0.\label{one}
\end{align}
In the fourth equality above, we have used the fact that if $\bar{p}_N(s,x)=0$ for some $(s,x)$, global non-negativity of $\bar{p}_N$ will imply that $\partial_s\bar{p}_N(s,x) =0$. Again by dominated convergence theorem,
\begin{align}
\lim_{\epsilon \rightarrow 0}\int_{t_1}^{t_2}\bar{\mathbb{E}}_{\Pi^N}\left(\eta_n(\bX_s)\frac{\L \bar{p}_N^{(\epsilon)} (s, \bX_s)}{\bar{p}_N^{(\epsilon)}(s,\bX_s)}\right)ds &= \lim_{\epsilon \rightarrow 0}\int_{t_1}^{t_2}\int_{\Reals^N}\eta_n(x) \L \bar{p}_N^{(\epsilon)}(s,x)\frac{\bar{p}_N(s,x)}{\bar{p}_N^{(\epsilon)}(s,x)}\Phi^N(dx) ds\nonumber\\
 &=\int_{t_1}^{t_2}\int_{\Reals^N} \eta_n(x) \L\bar{p}_N(s,x)\mathbb{I}_{\{\bar{p}_N(s,x) >0\}}\Phi^N(dx) ds\nonumber\\
 &\le \int_{t_1}^{t_2}\int_{\Reals^N} \eta_n(x) \L\bar{p}_N(s,x)\Phi^N(dx) \nonumber \\
&= \int_{t_1}^{t_2}\int_{\Reals^N}\L \eta_n(x) \bar{p}_N(s,x)\Phi^N(dx) ds.\label{eq:eq501}
\end{align}
In obtaining the inequality above, we have used the fact that if $\bar{p}_N(s,x)=0$ for some $(s,x)$, then from non-negativity of $\bar{p}_N$ it follows that $V_i\bar{p}_N(s,x) =0$ for each $i$, and, as such an $(s,x)$ is a local minimum, $V_i^2\bar{p}_N(s,x) \ge 0$ for each $i$ and therefore
\begin{align*}
&\int_{t_1}^{t_2}\int_{\Reals^N} \eta_n(x) \L\bar{p}_N(s,x)\mathbb{I}_{\{\bar{p}_N(s,x) = 0\}}\Phi^N(dx) ds\\
&\quad = \int_{t_1}^{t_2}\int_{\Reals^N} \eta_n(x) \frac{N^2}{2}\sum_{i=1}^N (V_i^2\bar{p}_N)(s,x)\mathbb{I}_{\{\bar{p}_N(s,x) = 0\}}\Phi^N(dx) ds \ge 0.
\end{align*}
The last equality in \eqref{eq:eq501} follows from the fact that $\L$ is symmetric with respect to the measure $\Phi^N(dx)$.

By part (iii) of the set of conditions satisfied by $(\eta_n)_{n \ge 1}$, there is a $\gamma_1(\eta)\in (0,\infty)$  such that for all  $n,N \in \NN$ and $x \in \Reals^N$ 
$|\L \eta_n(x)| \le \gamma_1(\eta)N^2\sum_{i=1}^N |\phi'(x_i)|$. Moreover, by \eqref{sigmaass} and the entropy bound \eqref{align:absen}, there is a $\kappa_1 \in (0,\infty)$ such that for all $n,N \in \NN$ and $0<t_1<t_2<T$
\begin{multline*}
\int_{t_1}^{t_2}\int_{\Reals^N}\sum_{i=1}^N |\phi'(x_i)| \bar{p}_N(s,x)\Phi^N(dx) ds \le \int_{t_1}^{t_2}\log\left(\int_{\Reals^N} e^{\sum_{i=1}^N |\phi'(x_i)|} \Phi^N(dx)\right)ds + \gamma(\bar p) N \le \kappa_1 N.
\end{multline*} 
Therefore, as $\L \eta_n$ converges to zero pointwise as $n \rightarrow \infty$, by dominated convergence theorem
and \eqref{eq:eq501},
\begin{align}\label{two}
\limsup_{n \rightarrow \infty}\lim_{\epsilon \rightarrow 0}\int_{t_1}^{t_2}\bar{\mathbb{E}}_{\Pi^N}\left(\eta_n(\bX_s)\frac{\L \bar{p}_N^{(\epsilon)} (s, \bX_s)}{\bar{p}_N^{(\epsilon)}(s,\bX_s)}\right)ds \le \lim_{n \rightarrow \infty}\int_{t_1}^{t_2}\int_{\Reals^N}\L \eta_n(x) \bar{p}_N(s,x)\Phi^N(dx) ds = 0.
\end{align}
Next, we consider the third term in \eqref{eq:t1epsn}. By the  monotone convergence theorem,
\begin{align*}
\lim_{\epsilon \rightarrow 0}\int_{t_1}^{t_2}\bar{\mathbb{E}}_{\Pi^N}\left(\log \bar{p}_N^{(\epsilon)}(s, \bX_s) \L\eta_n(\bX_s)\right)ds &= \lim_{\epsilon \rightarrow 0}\int_{t_1}^{t_2}\int_{\mathbb{R}^N}\L\eta_n(x)\bar{p}_N(s, x)\log \bar{p}_N^{(\epsilon)}(s, x) \Phi^N(dx)ds\\
&= \int_{t_1}^{t_2}\int_{\mathbb{R}^N}\L\eta_n(x)\bar{p}_N(s, x)\log \bar{p}_N(s, x) \Phi^N(dx)ds.
\end{align*}
Also
$$
|\L \eta_n(x)|\bar{p}_N(s, x)|\log \bar{p}_N(s, x)| \le  \gamma_1(\eta) N^2\left(\sum_{i=1}^N |\phi'(x_i)|\right)\bar{p}_N(s, x)|\log \bar{p}_N(s, x)|.
$$
Since $\bar{p}_N(0,\cdot) \le M$, we have by Lemma \ref{ltwo} that for each $N\in \mathbb{N}$, 
$$
\sup_{t\in [0,T]}\int_{\mathbb{R}^N}\bar{p}_N^2(t,x) \Phi^N(dx) < \infty.
$$
Thus, applying H\"{o}lder's inequality with $p>2$ and $p^{-1} + q^{-1} = 1$,  along with \eqref{sigmaass}, yields
for each $N\in \NN$
\begin{multline*}
\int_{t_1}^{t_2}\int_{\mathbb{R}^N} \sum_{i=1}^N |\phi'(x_i)|\bar{p}_N(s, x)|\log \bar{p}_N(s, x)| \Phi^N(dx) ds\\
\le \int_{t_1}^{t_2} \left(\int_{\mathbb{R}^N} \left(\sum_{i=1}^N |\phi'(x_i)|\right)^p\Phi^N(dx)\right)^{1/p}\left(\int_{\mathbb{R}^N} \left(\bar{p}_N(s, x)|\log \bar{p}_N(s, x)|\right)^q\Phi^N(dx)\right)^{1/q}ds  < \infty.
\end{multline*}
Therefore, by dominated convergence theorem,
\begin{align}
 &\lim_{n \rightarrow \infty}\lim_{\epsilon \rightarrow 0}\int_{t_1}^{t_2}\bar{\mathbb{E}}_{\Pi^N}\left(\log \bar{p}_N^{(\epsilon)}(s, \bX_s) \L\eta_n(\bX_s)\right)ds\nonumber\\
 &\quad =\lim_{n \rightarrow \infty}\int_{t_1}^{t_2}\int_{\mathbb{R}^N}\L\eta_n(x)\bar{p}_N(s, x)\log \bar{p}_N(s, x) \Phi^N(dx)ds=0.\label{three}
\end{align}
Consider now the fourth term in the definition of $T_1^{(\epsilon)}(n)$.
From \eqref{controlas} and the Cauchy-Schwarz inequality, we conclude that for each $N \in \NN$ there is a $c_1(N)\in (0,\infty)$ such that for all $n\in \NN$
\begin{align*}
\sum_{i=1}^N N|\psi_{i+1}(s) (V_i\eta_n)(\bX_s)\log \bar{p}_N^{(\epsilon)}(s, \bX_s)| \le c_1(N)\left(\sum_{i=1}^N(V_i\eta_n)^2(\bX_s)\right)^{1/2}|\log \bar{p}_N^{(\epsilon)}(s, \bX_s)|.
\end{align*}
Again from part (iii) of the set of conditions satisfied by $(\eta_n)_{n \ge 1}$,  there is a  $\gamma_2(\eta)\in (0,\infty)$  such that for all $n,N \in \NN$ and $x\in \RR^N$
$
\sum_{i=1}^N(V_i\eta_n)^2(x) \le N \gamma_2(\eta).
$
Therefore, by \eqref{align:absen} and dominated convergence theorem, we conclude that
\begin{align}
&\lim_{n \rightarrow \infty}\lim_{\epsilon \rightarrow 0}\left|\int_{t_1}^{t_2}\bar{\mathbb{E}}_{\Pi^N}\left(N\sum_{i=1}^N \psi_{i+1}(s) (V_i\eta_n)(\bX_s)\log \bar{p}_N^{(\epsilon)}(s, \bX_s)\right)ds\right| \nonumber\\
&\quad \le \lim_{n \rightarrow \infty}\lim_{\epsilon \rightarrow 0}
\int_{t_1}^{t_2}\int_{\mathbb{R}^N} c_1(N)\left(\sum_{i=1}^N(V_i\eta_n)^2(x)\right)^{1/2} |\log \bar{p}_N^{(\epsilon)}(s, x)|\bar{p}_N(s, x)\Phi^N(dx)ds\nonumber\\
&\quad =\lim_{n \rightarrow \infty}
\int_{t_1}^{t_2}\int_{\mathbb{R}^N} c_1(N)\left(\sum_{i=1}^N(V_i\eta_n)^2(x)\right)^{1/2} |\log \bar{p}_N(s, x)|\bar{p}_N(s, x)\Phi^N(dx)ds
 =0.
\label{four}
\end{align}
Finally, for the last term in \eqref{eq:t1epsn} observe that, from dominated convergence theorem, the form of the operator $\L$, and \eqref{two},
\begin{align}
&\lim_{n \rightarrow \infty}\lim_{\epsilon \rightarrow 0}\int_{t_1}^{t_2}\bar{\mathbb{E}}_{\Pi^N}\left(N^2 \sum_{i=1}^N V_i\eta_n(\bX_s) \frac{V_i \bar{p}_N^{(\epsilon)}(s,\bX_s)}{\bar{p}_N^{(\epsilon)}(s,\bX_s)}\right)ds\nonumber\\
&\quad= \lim_{n \rightarrow \infty}\lim_{\epsilon \rightarrow 0}\int_{t_1}^{t_2}N^2 \sum_{i=1}^N V_i\eta_n(x) V_i \bar{p}_N^{(\epsilon)}(s,x) \frac{\bar{p}_N(s,x)}{\bar{p}_N^{(\epsilon)}(s,x)} \Phi^N(dx)ds\nonumber\\
&\quad= \lim_{n \rightarrow \infty}\int_{t_1}^{t_2}N^2 \sum_{i=1}^N V_i\eta_n(x) V_i \bar{p}_N(s,x) \mathbb{I}_{\{\bar{p}_N(s,x) >0\}}\Phi^N(dx)ds\nonumber\\
&\quad= \lim_{n \rightarrow \infty}\int_{t_1}^{t_2}N^2 \sum_{i=1}^N V_i\eta_n(x) V_i \bar{p}_N(s,x)\Phi^N(dx)ds\nonumber\\
&\quad= \lim_{n \rightarrow \infty}-2\int_{t_1}^{t_2} \L\eta_n(x) \bar{p}_N(s,x) \Phi^N(dx)ds = 0.
\label{five}
\end{align}
In the third equality above, we have once more used the fact that if $\bar{p}_N(s,x)=0$ for some $(s,x)$, then $V_i\bar{p}_N(s,x) =0$ for each $i$.

From \eqref{one}, \eqref{two}, \eqref{three}, \eqref{four} and \eqref{five}, we conclude that \eqref{tone} is satisfied.
We will now obtain an upper bound on $T_2^{(\epsilon)}(n)$ in terms of 
 $I_n^{(\epsilon)}$ defined as 
\begin{align}\label{trunk}
I_n^{(\epsilon)} = \int_{t_1}^{t_2}\int_{\mathbb{R}^N}\eta_n(x)\sum_{i=1}^N \frac{(V_i \bar{p}_N)^2(s,x)}{(\bar{p}_N^{\epsilon})^2(s, x)}\bar{p}_N(s,x)\Phi^N(dx)ds.
\end{align}
Observe that
\begin{align}\label{twoI}
\int_{t_1}^{t_2}\bar{\mathbb{E}}_{\Pi^N}\left(\frac{N^2}{2}\eta_n(\bX_s)\sum_{i=1}^N \frac{(V_i \bar{p}_N^{\epsilon})^2(s,\bX_s)}{(\bar{p}_N^{\epsilon})^2(s,\bX_s)}\right)ds=\frac{N^2}{2}I_n^{(\epsilon)}.
\end{align}
Using the Cauchy-Schwarz inequality,
\begin{multline*}
\left\lvert\int_{t_1}^{t_2}\bar{\mathbb{E}}_{\Pi^N}\left(N\sum_{i=1}^N \psi_{i+1}(s) \eta_n(\bX_s)\frac{V_i\bar{p}_N^{\epsilon}(s,\bX_s)}{\bar{p}_N^{\epsilon}(s,\bX_s)}\right)ds\right\rvert\\
\le N\int_{t_1}^{t_2}\bar{\mathbb{E}}_{\Pi^N}\left[\sqrt{\eta_n(\bX_s)}\left(\sum_{i=1}^N |\psi_{i+1}(s)|^2\right)^{1/2} \left(\sum_{i=1}^N\eta_n(\bX_s)\frac{(V_i\bar{p}_N^{\epsilon})^2(s,\bX_s)}{(\bar{p}_N^{\epsilon})^2(s,\bX_s)}\right)^{1/2}\right]ds.
\end{multline*}
By \eqref{controlas} and parts (i) and (ii) of conditions on $(\eta_n)_{n \ge 1}$ applied to the right-hand side above,
for each $N\in \NN$
there is a $c_2(N) \in (0,\infty)$ such that for all $n\in \NN$, $0< t_1<t_2 < T$ and $M \in (0, \infty)$ (the bound on the initial density)
\begin{align}
&\left\lvert\int_{t_1}^{t_2}\bar{\mathbb{E}}_{\Pi^N}\left(N\sum_{i=1}^N \psi_{i+1}(s) \eta_n(\bX_s)\frac{V_i\bar{p}_N^{\epsilon}(s,\bX_s)}{\bar{p}_N^{\epsilon}(s,\bX_s)}\right)ds\right\rvert\nonumber\\
&\le c_2(N) \int_{t_1}^{t_2}\bar{\mathbb{E}}_{\Pi^N} \left(\sum_{i=1}^N\eta_n(\bX_s)\frac{(V_i\bar{p}_N^{\epsilon})^2(s,\bX_s)}{(\bar{p}_N^{\epsilon})^2(s,\bX_s)}\right)^{1/2}
ds\nonumber\\
&\le c_2(N)\sqrt{T}\sqrt{I_n^{(\epsilon)}}, \label{threeI}
\end{align}
From \eqref{twoI} and \eqref{threeI}, we obtain
\begin{align}\label{ttwo}
T_2^{(\epsilon)}(n) \le - \frac{N^2}{2}I_n^{(\epsilon)} + c_2(N)\sqrt{T}\sqrt{I_n^{(\epsilon)}}.
\end{align}
We have from Lemma \ref{entlemma} and \eqref{hlim}, that for some $\kappa_2\in (0,\infty)$
such that for all $0<t_1<t_2<T$, $N\in \NN$ and $M\in (0,\infty)$ 
\begin{align}\label{hexp}
\lim_{n \rightarrow \infty}\lim_{\epsilon \rightarrow 0} (H_{n,N}^{(\epsilon)}(t_2) - H_{n,N}^{(\epsilon)}(t_1)) = H_N(t_2)-H_N(t_1) \ge -\gamma(\bar p) N.
\end{align}
Recalling that 
\begin{equation}\label{eq:hnepst1t2}
H_n^{(\epsilon)}(t_2) - H_n^{(\epsilon)}(t_1)= T_1^{(\epsilon)}(n) + T_2^{(\epsilon)}(n).
\end{equation}
and \eqref{tone}, we have
\begin{equation}\label{lastone}
-\gamma(\bar p)N \le \liminf_{n \rightarrow \infty}\lim_{\epsilon \rightarrow 0} T_1^{(\epsilon)}(n) + \liminf_{n \rightarrow \infty}\liminf_{\epsilon \rightarrow 0} T_2^{(\epsilon)}(n) \le \liminf_{n \rightarrow \infty}\liminf_{\epsilon \rightarrow 0} T_2^{(\epsilon)}(n)
\end{equation}
From the bound on $T_2^{(\epsilon)}(n)$ obtained in \eqref{ttwo}, observe that 
$
\liminf_{n \rightarrow \infty}\liminf_{\epsilon \rightarrow 0}T_2^{(\epsilon)}(n) = -\infty
$
if 
$
\limsup_{n \rightarrow \infty}\lim_{\epsilon \rightarrow 0}I_n^{(\epsilon)} = \infty,
$
which yields a contradiction by virtue of \eqref{lastone}. Thus, we have
$$
\lim_{n \rightarrow \infty}\lim_{\epsilon \rightarrow 0} I_n^{(\epsilon)} = \int_{t_1}^{t_2} \sum_{i=1}^N\int_{\mathbb{R}^N} \frac{(V_i \bar{p}_N)^2(s,x)}{\bar{p}_N(s,x)}\Phi^N(dx) ds< \infty.
$$
Moreover, from \eqref{ttwo} and \eqref{hexp}, it follows that for each $N\in \NN$ there is a  $c_3(N) \in (0,\infty)$
such that for all $0<t_1<t_2<T$ and $M\in (0,\infty)$
$$
\int_{t_1}^{t_2} \sum_{i=1}^N\int_{\mathbb{R}^N} \frac{(V_i \bar{p}_N)^2(s,x)}{\bar{p}_N(s,x)}\Phi^N(dx) ds \le c_3(N).
$$
 Taking limits $t_1 \downarrow 0$ and $t_2 \uparrow T$, we obtain that for each $N\in \NN$
$$
\int_{0}^{T} \sum_{i=1}^N\int_{\mathbb{R}^N} \frac{(V_i \bar{p}_N)^2(s,x)}{\bar{p}_N(s,x)}\Phi^N(dx) ds \le c_3(N),
$$
which proves \eqref{align:Ifin} when the initial density $\bar{p}_N(0,\cdot)$ of the controlled process is bounded.

Now, we address the general case. First, it follows from a variational representation of the function $I$ (see \cite{guopapvar}) that $I$ is lower semicontinuous under the topology of weak convergence of measures, i.e., if a sequence of measures $\mu^{(k)}(dx) = p^{(k)}(x)dx$ converges weakly to $\mu(dx) = p(x)dx$, then
 $$
 I(p(\cdot)) \le \liminf_{k \rightarrow \infty} I(p^{(k)}(\cdot)).
 $$
 Consider the sequence of densities 
$$
\bar{p}^{(k)}_N(0,x) = \frac{\bar{p}_N(0,x) \wedge k}{\int_{\mathbb{R}^N} (\bar{p}_N(0,z) \wedge k) \Phi^N(dz)}, k \ge k_0
$$
where $k_0$ is chosen large enough to ensure that the denominator in the above expression is positive for all $k \ge k_0$. It is straightforward to check that for fixed $N$, the law at time $t$ of the controlled process with initial measure having density $\bar{p}^{(k)}_N(0,\cdot)$ replacing $\bar{p}_N(0,\cdot)$, written as $\bX^{N,(k)}(t)$, converges weakly  as $k \rightarrow \infty$ to that of our original controlled process at time $t$, namely $\bX^N(t)$. Moreover, for fixed $N$,
$$
\lim_{k \rightarrow \infty} \int_{\Reals^N}\bar{p}^{(k)}_N(0,x) |\log (\bar{p}^{(k)}_N(0,x))| \Phi^N(dx) = \int_{\Reals^N}\bar{p}_N(0,x) |\log (\bar{p}_N(0,x))| \Phi^N(dx) \le C_0N
$$
and therefore, exactly as in the proof of Lemma \ref{entlemma}, there is $K \in \NN$ and  $\gamma_1(\bar p) \in (0,\infty)$  such that for all $t \in [0,T]$, $N\in \NN$ and $k\ge K$
\begin{align}\label{align:absenk}
\int_{\Reals^N}\bar{p}^{(k)}_N(t,x) |\log (\bar{p}^{(k)}_N(t,x))| \Phi^N(dx) \le \gamma_1(\bar p)N.
\end{align} 
The proof of the case with bounded initial density given above now gives that there is a $c_4(N) \in (0,\infty)$ such that for each $k \ge K$,
$$
\int_{0}^{T} \sum_{i=1}^N\int_{\mathbb{R}^N} \frac{(V_i \bar{p}^{(k)}_N)^2(s,x)}{\bar{p}^{(k)}_N(s,x)}\Phi^N(dx) ds \le c_4(N).
$$
 Using lower semicontinuity  of the functional $I$ and Fatou's lemma, we obtain
\begin{align*}
\int_{0}^{T} \sum_{i=1}^N\int_{\mathbb{R}^N} \frac{(V_i \bar{p}_N)^2(s,x)}{\bar{p}_N(s,x)}\Phi^N(dx) ds &\le \int_{0}^{T} \liminf_{k \rightarrow \infty}\sum_{i=1}^N\int_{\mathbb{R}^N} \frac{(V_i \bar{p}^{(k)}_N)^2(s,x)}{\bar{p}^{(k)}_N(s,x)}\Phi^N(dx) ds\\
&\le  \liminf_{k \rightarrow \infty}\int_{0}^{T} \sum_{i=1}^N\int_{\mathbb{R}^N} \frac{(V_i \bar{p}^{(k)}_N)^2(s,x)}{\bar{p}^{(k)}_N(s,x)}\Phi^N(dx) ds \le c_4(N),
\end{align*}
which proves \eqref{align:Ifin} for the general case.

Now, we proceed to prove the bound claimed in the lemma, namely \eqref{INest}. As before, we first assume that there exists $M>0$ such that $\bar{p}_N(0,x) \le M$ for all $x \in \mathbb{R}^N$. 
We recall the expression $T_2^{(\epsilon)}(n)$ from \eqref{eq:t2epsn}.
To estimate the last term in the expression for $T_2^{(\epsilon)}(n)$, we use parts (i) and (ii) of the set of conditions satisfied by $(\eta_n)_{n \ge 1}$ and apply Cauchy-Schwarz inequality  to obtain
\begin{align*}
&\left\lvert\int_{t_1}^{t_2}\bar{\mathbb{E}}_{\Pi^N}\left(N\sum_{i=1}^N \psi_{i+1}(s) \eta_n(\bX_s)\frac{V_i\bar{p}_N^{(\epsilon)}(s,\bX_s)}{\bar{p}_N^{(\epsilon)}(s,\bX_s)}\right)ds \right\rvert\\
&\le N\left(\int_{t_1}^{t_2}\bar{\mathbb{E}}_{\Pi^N}\sum_{i=1}^N |\psi_i(s)|^2ds\right)^{1/2} \left(\int_{t_1}^{t_2}\bar{\mathbb{E}}_{\Pi^N}\sum_{i=1}^N\frac{(V_i\bar{p}_N^{(\epsilon)})^2(s,\bX_s)}{(\bar{p}_N^{(\epsilon)})^2(s,\bX_s)}ds\right)^{1/2}\\
&= N\left(\int_{t_1}^{t_2}\bar{\mathbb{E}}_{\Pi^N}\sum_{i=1}^N |\psi_i(s)|^2ds\right)^{1/2} \left(\int_{t_1}^{t_2}\sum_{i=1}^N\int_{\mathbb{R}^N}\frac{(V_i\bar{p}_N^{(\epsilon)})^2(s,x)}{(\bar{p}_N^{(\epsilon)})^2(s, x)}\bar{p}_N(s,x)\Phi^N(dx)ds\right)^{1/2}.
\end{align*}
Using the bound in \eqref{Hzeroeq} and monotone convergence theorem in the bound above, we get
\begin{multline}\label{fin3}
\limsup_{n \rightarrow \infty} \limsup_{\epsilon \rightarrow 0}\left\lvert\int_{t_1}^{t_2}\bar{\mathbb{E}}_{\Pi^N}\left(N\sum_{i=1}^N \psi_{i+1}(s) \eta_n(\bX_s)\frac{V_i\bar{p}_N^{(\epsilon)}(s,\bX_s)}{\bar{p}_N^{(\epsilon)}(s,\bX_s)}\right)ds \right\rvert\\
\le \sqrt{C_0}N^{3/2} \left(\int_{t_1}^{t_2} \sum_{i=1}^N\int_{\mathbb{R}^N} \frac{(V_i \bar{p}_N)^2(s,x)}{\bar{p}_N(s,x)}\Phi^N(dx) ds\right)^{1/2}.
\end{multline}
Using \eqref{fin3}, \eqref{align:Ifin} and monotone convergence theorem,
\begin{align}
\limsup_{n \rightarrow \infty} \limsup_{\epsilon \rightarrow 0} T_2^{(\epsilon)}(n) &\le -\frac{N^2}{2}\left(\int_{t_1}^{t_2} \sum_{i=1}^N\int_{\mathbb{R}^N} \frac{(V_i \bar{p}_N)^2(s,x)}{\bar{p}_N(s,x)}\Phi^N(dx) ds\right)\nonumber\\
 &\quad+ \sqrt{C_0}N^{3/2} \left(\int_{t_1}^{t_2} \sum_{i=1}^N\int_{\mathbb{R}^N} \frac{(V_i \bar{p}_N)^2(s,x)}{\bar{p}_N(s,x)}\Phi^N(dx) ds\right)^{1/2}.\label{ttwobound}
\end{align}
Note that by \eqref{align:Ifin} the terms on the right side are finite.
Now, using \eqref{lastone} and \eqref{ttwobound} in \eqref{eq:hnepst1t2} 
we obtain
\begin{align}
-\gamma(\bar p) N &\le -\frac{N^2}{2}\left(\int_{t_1}^{t_2} \sum_{i=1}^N\int_{\mathbb{R}^N} \frac{(V_i \bar{p}_N)^2(s,x)}{\bar{p}_N(s,x)}\Phi^N(dx) ds\right)\nonumber\\
 &\quad+ \sqrt{C_0}N^{3/2} \left(\int_{t_1}^{t_2} \sum_{i=1}^N\int_{\mathbb{R}^N} \frac{(V_i \bar{p}_N)^2(s,x)}{\bar{p}_N(s,x)}\Phi^N(dx) ds\right)^{1/2}. \label{good}
\end{align}
Letting
$$
y\doteq N^{1/2}\left(\int_{t_1}^{t_2} \sum_{i=1}^N\int_{\mathbb{R}^N} \frac{(V_i \bar{p}_N)^2(s,x)}{\bar{p}_N(s,x)}\Phi^N(dx) ds\right)^{1/2},
$$
\eqref{good} can be rewritten as
$
y^2 - 2\sqrt{C_0}y - 2\gamma(\bar p) \leq 0$.
 This in turn implies that $y \leq \gamma_2(\bar p)$
where $\gamma_2(\bar p) = \sqrt{C_0} + \sqrt{C_0 + 2\gamma(\bar p)}$, namely
$$
\left(\int_{t_1}^{t_2} \sum_{i=1}^N\int_{\mathbb{R}^N} \frac{(V_i \bar{p}_N)^2(s,x)}{\bar{p}_N(s,x)}\Phi^N(dx) ds\right)^{1/2} \le \frac{\gamma_2(\bar p)}{N^{1/2}}.
$$
By taking limits $t_1 \downarrow 0$ and $t_2 \uparrow T$ in the above bound, we get
$$
\int_0^T I(\bar{p}_N(s,\cdot))ds=\int_{0}^{T} \sum_{i=1}^N\int_{\mathbb{R}^N} \frac{(V_i \bar{p}_N)^2(s,x)}{\bar{p}_N(s,x)}\Phi^N(dx) ds \le \frac{[\gamma_2(\bar p)]^2}{N}.
$$
For the general case when $\bar{p}_N(0,\cdot)$ is not bounded, approximate $\bar{p}^{(k)}_N(0,\cdot)$ by $\bar{p}^{(k)}_N(0,\cdot)$ as before and let $K$ and $\gamma_1(\bar p)$ be as above \eqref{align:absenk}.
%
%
 The proof of the case when $\bar{p}_N(0,\cdot)$ bounded given above and the bound \eqref{align:absenk} now gives for each $k \ge K$,
$$
\int_{0}^{T} \sum_{i=1}^N\int_{\mathbb{R}^N} \frac{(V_i \bar{p}^{(k)}_N)^2(s,x)}{\bar{p}^{(k)}_N(s,x)}\Phi^N(dx) ds \le \frac{[\gamma_3(\bar p)]^2}{N},
$$
where $\gamma_3(\bar p) = \sqrt{C_0} + \sqrt{C_0 + 2\gamma_1(\bar p)}$. Using lower semicontinuity and Fatou's lemma, we obtain
\begin{align*}
\int_{0}^{T} \sum_{i=1}^N\int_{\mathbb{R}^N} \frac{(V_i \bar{p}_N)^2(s,x)}{\bar{p}_N(s,x)}\Phi^N(dx) ds 
\le \liminf_{k \rightarrow \infty}\int_{0}^{T} \sum_{i=1}^N\int_{\mathbb{R}^N} \frac{(V_i \bar{p}^{(k)}_N)^2(s,x)}{\bar{p}^{(k)}_N(s,x)}\Phi^N(dx) ds \le \frac{[\gamma_3(\bar p)]^2}{N}.
\end{align*}
Finally, we obtain \eqref{INest} from the above bound by noting that the functional $I$ is convex in its argument and hence,
$$
I\left(\frac{1}{T}\int_0^T \bar{p}_N(s,\cdot)ds\right) \le \frac{1}{T}\int_0^T I(\bar{p}_N(s,\cdot))ds \le \frac{[\gamma_3(\bar p)]^2}{NT}.
$$
\end{proof}

\section{Tightness of $\bar\mu^N$ in $C([0,T], \mathcal{M}_S)$} \label{proofmubartight}
In this section, we will prove Lemma \ref{mubartight} which establishes the tightness of $\bar\mu^N$ in $\Omega=C\left([0,T]: \mathcal{M}_S\right)$ when the sequence $\{\psi^N, \Pi^N\}$ is as in Lemma \ref{entlemma}.

\begin{proof}[Proof of Lemma \ref{mubartight}]
Tightness of $\bar\mu^N$ in $\Omega$ will be established by showing the following two equalities:
\begin{equation}
\label{tightness1}\lim_{l\rightarrow \infty} \limsup_{N\rightarrow\infty} \bar{\mathbb{P}}_{\Pi^N} ( \bar\mu^N\notin \Omega^l)=0,
\end{equation}
and for every $\epsilon>0$ and  smooth function $J$ on $S$,
\begin{equation}
\label{tightness2}\lim_{\delta \searrow 0} \limsup_{N\rightarrow\infty}\bar{\mathbb{P}}_{\Pi^N}\left(\sup_{0\leq t, s\leq 1,\\ |t-s|\leq \delta} |\langle J,\bar{\mu}^N(t)\rangle-\langle J,\bar{\mu}^N(s)\rangle|>\epsilon\right) = 0.
\end{equation}
The equation in \eqref{tightness1} gives the tightness of the marginals of $\bar \mu^N$ and \eqref{tightness2} gives an equicontinuity estimate.
Together, (\ref{tightness1}) and (\ref{tightness2})  imply that $\bar\mu^N$ is tight.
\subsection{Proof of (\ref{tightness1})}
Recall that $\{\bar X^N(t)\}$ is given on the probability space $(\bar \clv, \bar \clf, \bar \PP)$ associated with a system
$\sys_{\Pi^N} \doteq (\bar \clv, \bar \clf, \{\bar \clf_t\}, \bar \PP, \bar X^N(0), \bfB^N)$. Further, recall that the probability measure 
$\bar \PP$ is also denoted as $\bar \PP_{\Pi^N}$. By enlarging the space if needed, we can construct a $\bar \clf_0$
measurable $\RR^N$-valued random variable $\bar V^N(0)$ with probability law $\Phi^N$ and construct the controlled process
$\{\bar V^N(t)\}$ on this probability space as $\{\bar X^N(t)\}$ was defined in Section \ref{subsec:varrep} using the
same control processes $\{\psi^N_i\}$. We denote the probability law of $\bar V^N$ on $C([0,T]:\RR^N)$ as
$\bar \QQ_{\Phi^N}$. Also recall the measure $\QQ_{\Phi^N}$ introduced in the proof of Lemma \ref{ltwo}.
For $t \in [0,T]$ and $i= 1, \ldots N$, let $w^i_t: C([0,T]:\RR^N) \to \RR$ be the canonical coordinate process, namely
$w^i_t(\om) = \om_i(t)$, for $\om = (\om_1, \ldots \om_N) \in C([0,T]:\RR^N)$. Let, abusing notation,
$\mu^N(t,d\theta) \doteq \frac{1}{N} \sum_{i=1}^N w^i_t \delta_{i/N}(d\theta).$
We begin by establishing an exponential estimate on $\bar{\mathbb{Q}}_{\Phi^N}(\mu^N\notin\Omega^l).$ 
By the Cauchy-Schwarz inequality
\begin{align*}
\bar{\mathbb{Q}}_{\Phi^N}( \mu^N\notin \Omega^l) &= \int_{C([0,T]:\RR^N)} \mathbb{I}_{\{ \mu^N\notin \Omega^l\}}\frac{d\bar{\mathbb{Q}}_{\Phi^N}}{d\mathbb{Q}_{\Phi^N}} d\mathbb{Q}_{\Phi^N}\\
 &\leq \left[\mathbb{Q}_{\Phi^N}( \mu^N\notin \Omega^l)\right]^{1/2}\left[\int_{C([0,T]:\RR^N)}\left[\left( \frac{d\bar{\mathbb{Q}}_{\Phi^N}}{d\mathbb{Q}_{\Phi^N}}\right)^2\right]d\mathbb{Q}_{\Phi^N}\right]^{1/2}.
\end{align*}
From \eqref{GPVest} recall that for some $C_1,C_2, l_0 \in (0,\infty)$, 
$\mathbb{P}( \mu^N\notin \Omega^l) \leq C_1 e^{-C_2Nl}$ for all  $l\ge l_0$  and  $N \in \NN$.
By Girsanov's theorem and recalling that $\psi^N$ satisfy the bound in \eqref{Hzeroeq}, we have that for some $C_3 \in (0,\infty)$ and all $N\in \NN$
\begin{equation}
\label{RNest} \int_{C([0,T]:\RR^N)} \left[\left( \frac{d\bar{\mathbb{Q}}_{\Phi^N}}{d\mathbb{Q}_{\Phi^N}}\right)^2\right]
d\mathbb{Q}_{\Phi^N}
 \leq e^{C_3N}.
\end{equation}
Thus, combining (\ref{GPVest}) and (\ref{RNest}) we have for all $l\ge l_0$ and $N\in \NN$
$
\bar{\mathbb{Q}}_{\Phi^N}( \mu^N\notin \Omega^l) \leq C_1 e^{N(-C_2l+C_3)}.
$
Assume without loss of generality that $l_0 > 2C_3/C_2$. Then, 
with $C_4 = \frac{1}{2} C_2$  we have  for all $l\ge L$  and $N\in \NN$
\begin{align}
\label{expdec}\bar{\mathbb{Q}}_{\Phi^N}(\mu^N\notin \Omega^l) \leq C_1 e^{-C_4Nl}.
\end{align}

The rest of the proof is the same as \cite{guopapvar}. We give the details for the sake of completeness.  
Let $A$ be the event $\{\mu^N\notin \Omega^l\}$, let $g = \theta \mathbb{I}_A$ where $\theta = \log\left(1+1/\bar{\mathbb{Q}}_{\Phi^N}(A)\right)$.  Applying the chain rule for relative entropy  we have

\begin{equation}\label{Rlaw}
R(\bar{\mathbb{Q}}_{\Pi^N}\|\bar{\mathbb{Q}}_{\Phi^N}) = R(\Pi^N\|\Phi^N) \leq C_0N,
\end{equation}
where $\bar{\mathbb{Q}}_{\Pi^N}$ is as introduced in the proof of Lemma \ref{entlemma} and the inequality is from \eqref{Hzeroeq}.
Therefore using the Donsker-Varadhan variational formula (see for example \cite[Lemma 1.4.3]{dupell4})
\begin{equation*}
\int_{C([0,T]:\RR^N)} g(\om) d \bar{\mathbb{Q}}_{\Pi^N}
 \leq \log \int_{C([0,T]:\RR^N)} e^{g(\om)} d \bar{\mathbb{Q}}_{\Phi^N} + C_0N.
\end{equation*}
By the definition of $g$ and $\theta$ and  (\ref{expdec}) we have
\begin{equation*}
	\bar{\mathbb{P}}_{\Pi^N} ( \bar\mu^N\notin \Omega^l) = 
\bar{\mathbb{Q}}_{\Pi^N}(A) \leq \frac{\log(2)+C_0N}{\log\left(1+1/\bar{\mathbb{Q}}_{\Phi^N}(A)\right)} \leq \frac{\log(2)+C_0N}{\log\left(1+C_1^{-1}e^{C_4Nl}\right)}.
\end{equation*}
Letting $N\rightarrow\infty$ and then $l\to \infty$ we have
$
\lim_{l\to \infty}\limsup_{N\rightarrow\infty}\bar{\mathbb{P}}_{\Pi^N} ( \bar\mu^N\notin \Omega^l) =0
$
which completes the proof of (\ref{tightness1}).
\subsection{Proof of (\ref{tightness2})}
Fix a smooth test function $J$ on $S$. Then
\begin{equation*}
\langle J,\bar{\mu}^N(t)\rangle = \frac{1}{N} \sum_{i=1}^N J\left(\frac{i}{N}\right) \bar{X}_i^N(t), \; t \in [0,T].
\end{equation*}
Recalling the definition of $\bar X^N$ from \eqref{align:controleq} we see that it suffices to show that
\begin{align}
\label{firstineq}&\lim_{\delta \searrow 0} \limsup_{N\rightarrow\infty}\bar{\mathbb{P}}_{\Pi^N}\left(\sup_{0\leq t, s\leq 1, |t-s|\leq \delta} 
\left|\int_s^t \frac{1}{N}\sum_{j=1}^N J''\left(\frac{j}{N}\right)\phi'(\bar{X}_j^N(\sigma))d\sigma\right|>\epsilon\right) = 0,\\
\label{secondineq}&\lim_{\delta \searrow 0} \limsup_{N\rightarrow\infty}\bar{\mathbb{P}}_{\Pi^N}\left(\sup_{0\leq t, s\leq 1, |t-s|\leq \delta} 
\left| \frac{1}{N}\sum_{j=1}^N J'\left(\frac{j}{N}\right)(B_j(t)-B_j(s))\right|>\epsilon\right) = 0, \\
\label{thirdineq}& \lim_{\delta \searrow 0} \limsup_{N\rightarrow\infty}\bar{\mathbb{P}}_{\Pi^N}\left(\sup_{0\leq t, s\leq 1, |t-s|\leq \delta} 
\left|\int_s^t \frac{1}{N}\sum_{j=1}^N J'\left(\frac{j}{N}\right)\psi_j(\sigma)d\sigma\right|>\epsilon\right) = 0.
\end{align}
Proofs of \eqref{secondineq} and \eqref{thirdineq} are straightforward. In particular, (\ref{secondineq}) follows from L\'evy's modulus of continuity theorem and for \eqref{thirdineq} note that 
\begin{align*}
\left|\int_s^t\frac{1}{N}\sum_{j=1}^N J'\left(\frac{j}{N}\right)\psi_j(\sigma)d\sigma\right| 
\leq (t-s)^{1/2} \|J'\|_{\infty} \left(\frac{1}{N}\sum_{j=1}^N \int_0^T |\psi_j(s)|^2 ds \right)^{1/2}
\leq (t-s)^{1/2} \|J'\|_{\infty} C_0,
\end{align*}
 where $C_0$ is as in \eqref{Hzeroeq}.

The proof of 
(\ref{firstineq}) follows by the same argument as in \cite{guopapvar}, however we give the details for completeness. 
Once again we abbreviate $\bar X^N$, $X^N$ as $\bar X$, $X$, respectively.
Since $J''$ is bounded, it suffices to show
\begin{equation*}
\lim_{\delta \searrow 0} \limsup_{N\rightarrow\infty}\bar{\mathbb{P}}_{\Pi^N}\left(\sup_{0\leq t, s\leq 1, |t-s|\leq \delta} 
\int_s^t \frac{1}{N}\sum_{j=1}^N \left|\phi'(\bar{X}_j(\sigma))\right|d\sigma>\epsilon\right) = 0
\end{equation*}
Recall the cutoff function $\phi'_l$ from \eqref{eq:cutoff} and note that 
 (\ref{firstineq}) holds clearly when $\phi'$ is replaced by $\phi'_l$. Thus to prove  (\ref{firstineq}) it suffices to show that
\begin{equation*}
\lim_{l\rightarrow\infty}\limsup_{N\rightarrow\infty} \bar{\mathbb{P}}_{\Pi^N}\left(\int_0^T\frac{1}{N}\sum_{i=1}^N |\phi'_l(\bar X_i(t))-\phi'(\bar X_i(t))| dt >\epsilon\right) = 0 
\end{equation*}
for all $\epsilon>0.$ Note that
\begin{equation*}
\bar{\mathbb{P}}_{\Pi^N}\left(\int_0^T\frac{1}{N}\sum_{i=1}^N |\phi'_l(\bar{X}_i(t))-\phi'(\bar{X}_i(t))| dt >\epsilon\right)
\leq \frac{1}{\epsilon} \int_0^1\frac{1}{N}\sum_{i=1}^N \bar{\mathbb{E}}_{\Pi^N}|\phi'_l(\bar{X}_i(t))-\phi'(\bar{X}_i(t))| dt,
\end{equation*}
and by the Donsker-Varadhan variational formula  and Lemma \ref{entlemma},  for any $0\leq t \leq T$ and any $\gamma>0$
\begin{align*}
\bar{\mathbb{E}}_{\Pi^N}\left(\gamma\sum_{i=1}^N |\phi'_l(\bar{X}_i(t))-\phi'(\bar{X}_i(t))|\right) 
&\leq \log\mathbb{E} \left(\exp\left(\gamma\sum_{i=1}^N|\phi'_l({X}_i(t))-\phi'({X}_i(t))|\right) \right)+ R(\bar{\mathbb{Q}}_{\Pi^N}(t)\| \Phi^N)\\
&\leq N\log\mathbb{E} \left(\exp\left(\gamma|\phi'_l({X}_1(0))-\phi'({X}_1(0))|\right) \right)+ C_0N,
\end{align*}
where the last inequality follows from the stationarity of $\{X(t)\}$ and $C_0$ is as in \eqref{Hzeroeq}.
Dividing by $N\gamma$ we have
\begin{equation*}
\bar{\mathbb{E}}_{\Pi^N}\left(\frac{1}{N}\sum_{i=1}^N |\phi'_l(\bar{X}_i(t))-\phi'(\bar{X}_i(t))|\right) \leq \frac{C_0}{\gamma} + \frac{1}{\gamma}
\log\left(\mathbb{E} \left(\exp(\gamma|\phi'(X_1(0))|)\mathbb{I}_{|\phi'(X_1(0))|>l}\right)+1\right),
\end{equation*}
since $\Phi^N$ is the stationary measure for $X_t.$
Assumption (\ref{sigmaass}) implies that for all $l$ sufficiently large 
\begin{equation*}
\log\left(\mathbb{E} \left(\exp(\gamma |\phi'(X_1(0))|)\mathbb{I}_{|\phi'(X_1(0))|>l}\right)\right)\leq 0.
\end{equation*}
Therfore,
\begin{equation*}
\lim_{l\rightarrow\infty}\limsup_{N\rightarrow\infty} \bar{\mathbb{P}}_{\Pi^N} \left(\int_0^1\frac{1}{N}\sum_{i=1}^N
 |\phi'_l(\bar{X}_i(t))-\phi'(\bar{X}_i(t))| dt > \epsilon\right) \leq \frac{C_0+\log 2}{\gamma\epsilon}.
\end{equation*}
Letting $\gamma\rightarrow\infty$ completes the proof of (\ref{firstineq}) and hence also the proof of (\ref{tightness2}).
\end{proof}

\section{Tightness and Subsequential Limits of $(\bar{L}^N,\nu^N)$} \label{prooflemmas}
In this section, we will prove Lemmas \ref{lbartight}, \ref{limitssame},  \ref{UnifInt}, and  \ref{convtopistar} which establish tightness and characterize subsequential limits of $(\bar{L}^N,\nu^N)$, where $\bar{L}^N$ 
(defined in \eqref{eq:lndxdt}) and $\nu^N = N^{-1} \sum_{i=1}^N \nu^N_i$ (with $\nu^N_i$  defined  as in \eqref{nudef}) are random measures constructed from the initial collection $\{\bar X^N_i(0)\}_{i=1}^N$.
\subsection{Proof of Lemma \ref{lbartight}}
Let $\lambda_N \in \clp(S)$ be defined as $\lambda_N(A) = \frac{1}{N} \sum_{i=1}^N \delta_{i/N}(A)$ for $A \in \clb(S)$.
By (\ref{Rbound}), the convexity of relative entropy, and Jensen's inequality
\begin{align*}
R(\bar{\mathbb{E}}_{\Pi^N}\nu^N\|\Phi\times\lambda_N) \leq \mathbb{E}_{\Pi^N}\left(\frac{1}{N}\sum_{i=1}^N R(\nu_i^N\|\Phi\times\delta_{i/N})\right)
= \mathbb{E}_{\Pi^N}\left(\frac{1}{N}\sum_{i=1}^N R(\mathbf{\Phi}_i^N\|\Phi)\right)\leq C.
\end{align*}
 Since for all $\alpha \in\mathbb{R}$, $\int_{\RR} e^{\alpha y} \Phi(dy) <\infty$ by \eqref{genfun}, we have from the above relative entropy bound that 
$\bar{\mathbb{E}}_{\Pi^N}\nu^N$ is tight sequence in $\clp(\RR \times S)$ (see e.g. Lemma 1.4.3.d in \cite{dupell4}). Consequently $\{\nu^N\}$ is a tight 
sequence of $\clp(\RR \times S)$-valued random variables. Next
we claim that $\bar{\mathbb{E}}_{\Pi^N} \bar L^N = \bar{\mathbb{E}}_{\Pi^N}\nu^N$, from which it will then follow that $\bar L^N$ is a tight sequence of  $\mathcal{P}(\mathbb{R}\times S)$-valued random variables as well.
Let $f$ be a bounded, continuous function on $\mathbb{R}\times S$. Then,
\begin{align*}
\bar{\mathbb{E}}_{\Pi^N}\int_{\mathbb{R}\times S} f(x,\theta)\bar{L}^N(dxd\theta) &= \bar{\mathbb{E}}_{\Pi^N} \frac{1}{N} \sum_{i=1}^N f\left(\bar{X}_i^N(0),\frac{i}{N}\right)\\
&=\bar{\mathbb{E}}_{\Pi^N} \frac{1}{N} \sum_{i=1}^N \int_{\mathbb{R}\times S}f\left(x,\theta\right)\nu_i^N(dxd\theta)\\
&=\bar{\mathbb{E}}_{\Pi^N} \int_{\mathbb{R}\times S}f\left(x,\theta\right)\nu^N(dxd\theta).\\
\end{align*}
This proves the claim and hence completes the proof of the lemma. \hfill \qed

\subsection{Proof of Lemma \ref{UnifInt}}\label{UnifIntSec}
From Lemma \ref{mubartight}, $\{\bar \mu^N\}$ is a tight sequence of $\Om$-valued random variables.
We restrict attention to a subsequence along which $(\bar \mu^N(0,\cdot), \bar L^N)$ converges in distribution to $(\bar \mu(0, \cdot), \bar L)$
in $\clm_S\times \clp(S\times \RR)$. 
It suffices to show that 
\begin{equation}\label{eq:eq500}
	\int_{\mathbb{R}}|x|\bar{L}(dx d\theta) < \infty \mbox{ a.s., }
	\end{equation} and that for all continuous $f: S \rightarrow \mathbb{R}$ 
\begin{equation}\label{eq:eq449}
\limsup_{N\rightarrow\infty}\left| \bar{\mathbb{E}}_{\Pi^N} \int_S \int_\mathbb{R} f(\theta) x \bar L^N(dxd\theta) -\bar{\mathbb{E}}\int_S\int_\mathbb{R} f(\theta) x \bar L(dxd\theta)\right| = 0.
\end{equation}
For $M \in (0,\infty)$, let $g_M(x)\doteq (x\wedge M)\vee (-M)$. Then for every $M$
\begin{equation*}
\int_S\int_\mathbb{R} f(\theta) g_M(x) \bar L^N(dxd\theta) \rightarrow \int_S\int_\mathbb{R} f(\theta) g_M(x) \bar L(dxd\theta),
\end{equation*}
in distribution.
%
Let $\bar L^N_0(dx)$ be the measure on $\mathbb{R}$ defined by
$
\bar L^N_0(dx) \doteq \frac{1}{N}\sum_{i=1}^N \delta_{\bar{X}_i^N(0)}(dx).
$
In order to prove \eqref{eq:eq500} and \eqref{eq:eq449} it then suffices to show that
\begin{equation}
\lim_{M\rightarrow \infty}\limsup_{N\rightarrow\infty}\mathbb{E} \int_{\mathbb{R}} |x| \mathbb{I}_{\{|x|\geq M\}} \bar L^N_0(dx) = 0.\label{book38}
\end{equation}
To prove (\ref{book38}),
we use the inequality that for all $a,b\geq 0, \sigma \geq 1,$
$
ab\leq e^{\sigma a}+ \frac{1}{\sigma}(b\log b - b + 1 ).
$
 Thus, with $\bar{\Phi}^N= \frac{1}{N}\sum_{i=1}^N \bar{\Phi}_i^N$,
\begin{align*}
\bar{\mathbb{E}}_{\Pi^N}\int_{\mathbb{R}} |x| 1_{|x|\geq M} \bar{L}^N_0(dx) &= \bar{\mathbb{E}}_{\Pi^N} \int_{\mathbb{R}} |x|1_{\{|x|\geq M\}} \bar{\Phi}^N(dx) \\
&= 
\int_{\mathbb{R}} |x| 1_{\{|x|\geq M\}} \frac{d\bar{\Phi}^N}{d\Phi}(x) \Phi(dx)\nonumber\\
&\le \int_{\mathbb{R}} e^{\sigma|x|}1_{\{|x|\geq M\}} \Phi(dx) + \frac{1}{\sigma} R(\bar{\Phi}^N\|\Phi)\\
&\le \int_{\mathbb{R}} e^{\sigma|x|}1_{\{|x|\geq M\}} \Phi(dx) + \frac{C}{\sigma},
\end{align*} 
where the last inequality is from \eqref{Rbound}. The equality in \eqref{book38} now follows on    sending first $N$, then $M\to \infty$ and and finally $\sigma$ to $\infty$.
\hfill \qed

\subsection{Proof of Lemma \ref{convtopistar}}\label{pistarsec}

By (\ref{mutight}), 
$
\frac{1}{N}\sum_{i=1}^N R(\bar\Phi_i^N\|\Phi) \leq
R(\pi^*\|\pi_0),$
and therefore by  Lemma \ref{lbartight}, $\bar L^N$ is tight. It thus suffices to show that if $\bar{L}$ is any subsequential limit of $\bar L^N,$ then $\bar{L}=\pi^*.$
For that, it  in turn suffices to show that for every bounded uniformly continuous $f$ on $\RR\times S$
\begin{equation*}
\bar{\mathbb{P}}\left(\int_{\mathbb{R}\times S} f(x,\theta) \bar L(dxd\theta) = \int_{\mathbb{R}\times S}  f(x,\theta) \pi^*(dxd\theta)\right) = 1.
\end{equation*}
Let $\delta >0$ and  $N_0 \in \NN$ be such that  $|f(x,\theta)-f(x,\theta')|<\delta$ whenever $|\theta-\theta'|\leq \frac{1}{N_0}$. Let for $N\ge N_0$
\begin{equation*}
\Delta_i^N \doteq N\int_{\mathbb{R}\times ((i-1)/N, i/N]} f(x,\theta)\pi^*(dxd\theta) -f\left(\bar X_i(0),\frac{i}{N}\right)
\end{equation*}
so that 
\begin{equation*}
\int_{\mathbb{R}\times S} f(x,\theta)\pi^*(dxd\theta) - \int_{\mathbb{R}\times S} f(x,\theta)\bar{L}^N(dxd\theta) = \frac{1}{N}\sum_{i=1}^N \Delta_i^N.
\end{equation*}
By Markov's inequality,
\begin{equation}
\bar{\mathbb{P}}_{\Pi^N}\left(\left|\int_{\mathbb{R}\times S} f(x,\theta)\pi^*(dxd\theta) - \int_{\mathbb{R}\times S} f(x,\theta)\bar L^N(dxd\theta)\right|>\epsilon\right) \leq \frac{1}{N^2\epsilon^2}\bar{\mathbb{E}}_{\Pi^N}\left(\sum_{i,j}\Delta_i^N\Delta_j^N\right).
\label{eq:markineq}
\end{equation}
Note that
$\bar{\mathbb{E}}_{\Pi^N} |\Delta_i^N|^2 \leq 4\|f\|_{\infty}^2$.
 We claim that for $i\neq j$
$
|\bar{\mathbb{E}}_{\Pi^N}\Delta_i\Delta_j|\leq 2\delta \|f\|_{\infty}$.
To see this note that for $i>j$, and with $\clg_i = \sigma\{\bar X_k(0): k \le i\}$,
$
\bar{\mathbb{E}}_{\Pi^N}(\Delta_i^N\Delta_j^N) = \bar{\mathbb{E}}_{\Pi^N}(\Delta_j^N\bar{\mathbb{E}}_{\Pi^N}(\Delta_i^N|\mathcal{G}_{i-1})),
$
and
\begin{align*}
|\bar{\mathbb{E}}_{\Pi^N}(\Delta_i^N|\mathcal{G}_{i-1})| &= \left|N\int_{\mathbb{R} \times ((i-1)/N, i/N]} f(x,\theta)\pi^*(dxd\theta) - \int_\mathbb{R}f\left(x,\frac{i}{N}\right)\Phi_i^N(dx)\right|\\
&=\left|N\int_\mathbb{R}\int_{(i-1)/N}^{i/N} f(x,\theta)\pi_1^*(dx|\theta)d\theta - N\int_\mathbb{R}f\left(x,\frac{i}{N}\right)\int_{(i-1)/N}^{i/N}\pi^*_1(dx|\theta)d\theta\right|\\
&\leq N\int_\mathbb{R}\int_{(i-1)/N}^{i/N} \left|f(x,\theta)-f\left(x,\frac{i}{N}\right)\right|\pi_1^*(dx|\theta)d\theta\\
&\leq \delta.
\end{align*}
The claim now follows since $|\Delta_j^N|\leq 2 \|f\|_{\infty}$.
Using the above observations on the right side of \eqref{eq:markineq}, we have
\begin{equation*}
\frac{1}{N^2\epsilon^2}\bar{\mathbb{E}}_{\Pi^N}\sum_{i,j}\Delta_i^N\Delta_j^N \leq \frac{1}{\epsilon^2N^2} (4N\|f\|_{\infty}^2+2N(N-1)\delta \|f\|_{\infty})
\end{equation*}
Letting $N\rightarrow \infty$ and then $\delta\rightarrow 0$ we have that
\begin{equation*}
\bar{\mathbb{P}}\left(\left|\int_\mathbb{R}\int_0^1 f(x,\theta)\pi^*(dxd\theta) - \int_\mathbb{R}\int_0^1 f(x,\theta)\bar L(dxd\theta)\right|>\epsilon\right) =0.
\end{equation*}
The result follows since $\epsilon>0$ is arbitrary. \hfill \qed

\subsection{Proof of Lemma \ref{limitssame}}
Since the second marginal of $\nu^N$ is the uniform measure on $\{i/N\}_{i=1}^N$, it is clear that the 
 the second marginal of $\nu$ must be $\lambda$ (namely the Lebesgue measure on $S$).
To see that $\bar{L}=\nu,$ let
\begin{equation*}
\Delta_i^N \doteq f\left(\bar{X}_i^N(0), \frac{i}{N}\right) - \int_{\mathbb{R}\times[0,1]} f(x,\theta)\nu_i^N(dxd\theta).
\end{equation*} Note that 
\begin{equation*}
\bar{\mathbb{E}}_{\Pi^N}(\Delta_i^N|\clg_{i-1}) = \int_{\mathbb{R}\times[0,1]} f(x,\theta) \nu_i^N(dxd\theta) - \int_{\mathbb{R}\times[0,1]} f(x,\theta) \nu_i^N(dxd\theta) = 0.
\end{equation*}
Therefore, $\bar{\mathbb{E}}_{\Pi^N}(\Delta_i^N\Delta_j^N)=0$ for all $i\neq j,$ and so
\begin{equation*}
\bar{\mathbb{P}}_{\Pi^N}\left(\left|\int_\mathbb{R}\int_0^1 f(x,\theta)\nu^N(dxd\theta) - \int_\mathbb{R}\int_0^1 f(x,\theta)\bar L^N(dxd\theta)\right|>\epsilon\right) \leq \frac{1}{N^2\epsilon^2}\bar{\mathbb{E}}_{\Pi^N}\left(\sum_{i=1}^N|\Delta_i^N|^2\right)\leq \frac{4\|f\|_{\infty}^2}{N\epsilon^2}.
\end{equation*}
The result follows on sending $N\to \infty$. \hfill \qed

\section{Proof of Proposition \ref{union}}
\label{sec:prfpropun}
In this section we provide the proof of Proposition \ref{union}. Part (a) of the proposition will be proved in Section \ref{sec:prfpropunA} and part (b) will be 
completed in Section \ref{sec:prfpropunB}.

\subsection{Proof of Proposition \ref{union}(a)}
\label{sec:prfpropunA}

Let $\{Z^N\}$ be as in the statement of Proposition \ref{union} and $I$ be a $[0,\infty]$-valued function on $\Om$ such that for all continuous and bounded $g$ on $\Om$, \eqref{eq:lapopen} holds.  We begin with the following lemma.

\begin{lemma}\label{unifdec} Let $M<\infty$ and let $\{h_l\}_{l\in \NN}$ be a sequence of continuous functions  on $\Om$ such that $0\leq h_l(y) \leq M$ for all $y \in \Om$ and all $l$. Then 
\begin{equation*}
\lim_{l\rightarrow\infty} \left(\liminf_{N\rightarrow\infty} \frac{1}{N}\log \mathbb{E} [\exp(-Nh_l(Z^{N}))] - \liminf_{N\rightarrow\infty}\frac{1}{N}\log \mathbb{E} [\exp(-Nh_l(Z^{N}))\mathbb{I}_{\Om^l}(Z^{N})]\right)=0.
\end{equation*}
In particular, for every $\epsilon>0$ there exists $L\in \NN$ such that $l\geq L$ implies 
\begin{equation*}
\liminf_{N\rightarrow\infty}\frac{1}{N}\log \mathbb{E} [\exp(-Nh_l(Z^{N}))\mathbb{I}_{\Om^l}(Z^{N})]\geq
\liminf_{N\rightarrow\infty} \frac{1}{N}\log \mathbb{E} [\exp(-Nh_l(Z^{N}))] -\epsilon.
\end{equation*}
\end{lemma}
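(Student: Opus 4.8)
The plan is to exploit the hypothesis \eqref{expdecay}, which gives a superexponential decay of the probability that $Z^N$ escapes $\Om^l$, together with the uniform bound $0 \le h_l \le M$, to show that the contribution of the event $\{Z^N \notin \Om^l\}$ to $\EE[\exp(-N h_l(Z^N))]$ is negligible on the exponential scale, uniformly in $l$ large. First I would write
$$
\EE[\exp(-Nh_l(Z^N))] = \EE[\exp(-Nh_l(Z^N))\Identity_{\Om^l}(Z^N)] + \EE[\exp(-Nh_l(Z^N))\Identity_{\Om\setminus\Om^l}(Z^N)].
$$
Since $h_l \ge 0$, the second term is bounded above by $\PP(Z^N \in \Om\setminus\Om^l)$, and since $h_l \le M$, the first term is bounded below by $e^{-NM}\PP(Z^N \in \Om^l)$, which for $N$ large (so that $\PP(Z^N\in\Om^l)\ge 1/2$, say) is at least $\tfrac12 e^{-NM}$. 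The key point is that by \eqref{expdecay}, for any $R < \infty$ we may choose $L$ so that for all $l \ge L$, $\limsup_{N\to\infty}\frac1N\log\PP(Z^N\in\Om\setminus\Om^l) \le -R$; taking $R > M$ makes the second term exponentially negligible compared to the first.

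The main technical point is then the elementary inequality: for nonnegative reals $a_N, b_N$ with $\liminf\frac1N\log a_N \ge -M$ (a lower bound we just established, valid for the ``$\Om^l$'' term, uniformly in $l$ since the constant $M$ does not depend on $l$) and $\limsup\frac1N\log b_N \le -R$ with $R > M$, one has $\liminf\frac1N\log(a_N+b_N) = \liminf\frac1N\log a_N$. Indeed $\log(a_N+b_N) \ge \log a_N$ always, and conversely $a_N + b_N \le a_N(1 + b_N/a_N)$ where $\frac1N\log(b_N/a_N) \le -R + M + o(1) < 0$, so $b_N/a_N \to 0$ and $\frac1N\log(1+b_N/a_N)\to 0$. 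Applying this with $a_N = \EE[\exp(-Nh_l(Z^N))\Identity_{\Om^l}(Z^N)]$ and $b_N = \EE[\exp(-Nh_l(Z^N))\Identity_{\Om\setminus\Om^l}(Z^N)]$ gives, for all $l \ge L$,
$$
\liminf_{N\to\infty}\frac1N\log\EE[\exp(-Nh_l(Z^N))] = \liminf_{N\to\infty}\frac1N\log\EE[\exp(-Nh_l(Z^N))\Identity_{\Om^l}(Z^N)],
$$
so the difference in the statement is in fact exactly $0$ for all $l \ge L$, which is stronger than the claimed limit and immediately yields the ``in particular'' assertion (with $\epsilon$ absorbed trivially, since the difference vanishes). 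The first displayed limit then follows since the difference is $\le 0$ always (monotonicity of the integrand's support) and $\ge 0$ for $l$ large by the above — actually it equals $0$ for $l\ge L$ and is bounded for $l < L$, but to get the clean limit one notes the difference is nonpositive in general while being $0$ eventually; I should be careful to state it as: the difference is $\le 0$ for every $l$, and $= 0$ for $l \ge L$, hence tends to $0$.

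The main obstacle — really the only subtlety — is making the lower bound $\liminf_{N}\frac1N\log a_N \ge -M$ legitimate \emph{uniformly in $l$}. This is where one needs the bound $h_l \le M$ with $M$ the same constant for all $l$ (given in the hypothesis), and the observation that $\PP(Z^N\in\Om^l) \to 1$ as $N\to\infty$ for each fixed $l$ (again from \eqref{expdecay}, which forces $\PP(Z^N\in\Om\setminus\Om^l)\to 0$ for $l$ large; for small $l$ one only needs the trivial fact that it is eventually $\ge 1/2$, which may require first enlarging $L$). Once the threshold $L$ is chosen with $R > M$ and $\PP(Z^N\in\Om^l)\ge 1/2$ eventually for $l \ge L$, the rest is the routine comparison above.
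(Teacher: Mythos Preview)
Your proof is correct and follows essentially the same route as the paper: decompose the expectation over $\Om^l$ and its complement, bound the complement piece by $\PP(Z^N\notin\Om^l)$ and the $\Om^l$ piece below by $e^{-NM}\PP(Z^N\in\Om^l)$, then use \eqref{expdecay} to show the ratio vanishes, i.e., $\frac{1}{N}\log(1+b_N/a_N)\to 0$ for $l$ large. One small slip: in your last paragraph the difference is always $\ge 0$ (not $\le 0$), since the full expectation dominates the restricted one---but this is irrelevant, as your argument already gives equality for all $l\ge L$, which is all that is needed.
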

\begin{proof}
Since $0\leq h_l(y)\leq M$, 
\begin{align*}
&\frac{1}{N} \log \mathbb{E} [\exp(-Nh_l(Z^{N}))] -\frac{1}{N} \log \mathbb{E}[ \exp(-Nh_l(Z^{N}))\mathbb{I}_{\Om^l}(Z^{N})] \\
&\quad= \frac{1}{N} \log\left(1+\frac{\mathbb{E}[\exp\{-Nh_l(Z^{N})\}\mathbb{I}_{(\Om^{l})^c}(Z^{N})]}{\mathbb{E}[\exp(-Nh_l(Z^{N}))\mathbb{I}_{\Om^{l}}(Z^{N})]}\right)\\
&\quad\leq  \frac{1}{N}\log\left(1+ e^{NM}\frac{\mathbb{P}(Z^{N}\in (\Om^{l})^c)}{\mathbb{P}(Z^{N}\in \Om^{l})}\right)
.
\end{align*}
By (\ref{expdecay}) there exists $L$ such that $l\geq L$ implies 

\begin{equation*}
\limsup_{N\rightarrow\infty} \frac{1}{N}\log(\mathbb{P}(Z^{N}\in (\Om^{l})^c)) \leq -3M.
\end{equation*}
Therfore, for any $l\geq L$ there exists $N_0= N_0(l)$ such that $N\geq N_0$ implies
$
\mathbb{P}(Z^{N}\in (\Om^{l})^c) \leq \exp\{-2NM\},
$
which also implies $\mathbb{P}(Z^{N}\in \Om^{l}) \geq 1-e^{-2M} \doteq C_M>0$.
Thus, for all $l\geq L,$
\begin{align*}
  &\liminf_{N\rightarrow\infty} \frac{1}{N} \log \mathbb{E} [\exp(-Nh_l(Z^{N}))] -\liminf_{N\rightarrow\infty} \frac{1}{N} \log \mathbb{E}[ \exp(-Nh_l(Z^{N}))\mathbb{I}_{\Om^l}(Z^{N})] \\
&\quad \leq  \limsup_{N\rightarrow\infty}  \frac{1}{N} \log(1+ C_M^{-1} e^{-NM})=0.
\end{align*}
The result follows.
\end{proof}
\begin{proof}[Proof of Proposition \ref{union}(a)]
	The proof is adapted from  \cite[Theorem 1.2.3]{dupell4}.
Let $G\subset \Om$ be open. 
Assume $I(G) < \infty$ (otherwise, \eqref{lowergen} is trivially true). Fix  $\epsilon \in (0, \frac{1}{2})$. Let $x\in G$ be  such that $I(x) <I(G) + \epsilon$. Let $M \doteq I(G)+1$. Recall $G^l = G \cap \Om^l$. Since $G^l \nearrow G$, there exists $l_0$ such that $x\in G^l$ for all $l\geq l_0$.  For each such $l$ there exists $\delta_l>0$ such that $B^l\doteq \{y\in\Om^l:d_*(x,y)<\delta_l\}\subset G^l$. Define $h_l$ on $\Om$ by 
\begin{equation*}
h_l(y) \doteq M\min\left\{\frac{d_*(x,y)}{\delta_l},1\right\}, \ \ \ y \in \Om.
\end{equation*}
From Lemma \ref{lem:topprops}(c), $h_l$ is  continuous on $\Om$ and $0\leq h_l(y)\leq M$ for all $y \in \Om$ and all $l.$ Also observe that $h_l(x)=0$ and $h_l(y)=M$ if $d_*(x,y) \ge \delta_l.$ Therefore,

\begin{align*}
\mathbb{E}[\exp(-Nh_l(Z^{N}))\mathbb{I}_{\Om^l}(Z^{N})] &= \mathbb{E}[\exp(-Nh_l(Z^{N}))\mathbb{I}_{B^l}(Z^{N})] + \mathbb{E}[\exp(-Nh_l(Z^{N}))\mathbb{I}_{\Om^l \setminus B^l}(Z^{N})]\\
&\leq  \mathbb{P}(Z^{N}\in B^l) + e^{-NM}.
\end{align*}
Thus, by Lemma \ref{unifdec}, there exists $l_1 \ge l_0$ such that for all $l \ge l_1$, 
\begin{align*}
\max\left\{-M,\liminf_{N\rightarrow\infty} \frac{1}{N} \log \mathbb{P}(Z^{N}\in B^l)\right\} &\geq \liminf_{N \rightarrow \infty} \frac{1}{N}\log \mathbb{E}[\exp(-Nh_l(Z^{N}))\mathbb{I}_{\Om^l}(Z^{N})]\\
&\geq \liminf_{N\rightarrow\infty} \frac{1}{N}\log \mathbb{E} [\exp(-Nh_l(Z^{N}))] - \epsilon\\
&\ge-\inf_{y\in\Om}\{h_l(y) + I(y)\} - \epsilon\\
&\geq -h_l(x)-I(x) - \epsilon\\
&= -I(x) - \epsilon > -I(G)-2\epsilon,
\end{align*}
where the third inequality is from \eqref{eq:lapopen}.
But by assumption, $-M=-I(G)-1<-I(G)-2\epsilon,$ so
\begin{align*}
\liminf_{N\rightarrow\infty} \frac{1}{N} \log \mathbb{P}(Z^{N}\in G) \geq \liminf_{N\rightarrow\infty} \frac{1}{N} \log \mathbb{P}(Z^{N}\in B^l)
> -I(G)-2\epsilon.
\end{align*}
As the choice of $\epsilon \in (0, \frac{1}{2})$ was arbitrary, this proves the lemma.
\end{proof}

\subsection{Proof of Proposition \ref{union}(b)}
\label{sec:prfpropunB}

We begin by showing that  \eqref{upperl} implies  \eqref{uppergen}.

%

\begin{lemma}\label{suffcond}
	Suppose that $\{Z^N\}_{N\in \NN}$ is  a sequence of $\Om$-valued random variables such that \eqref{expdecay} is satisfied.
	Also let $I: \Om \to [0,\infty]$. 
 Suppose that $F$ is a closed set in $\Om$ and there is a $l_0\in \NN$ such that for all $l\ge l_0$
		\begin{equation}
		\label{upperln}\limsup_{N\rightarrow\infty} \frac{1}{N}\log \mathbb{P}(Z^N\in F^l) \le -I(F^l).
		\end{equation}
		Then
		\begin{equation}
		\label{upperlngen}\limsup_{N\rightarrow\infty} \frac{1}{N}\log \mathbb{P}(Z^N\in F) \le -I(F).
		\end{equation}
\end{lemma}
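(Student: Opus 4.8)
The plan is to estimate $\PP(Z^N \in F)$ by splitting the event $\{Z^N \in F\}$ according to whether $Z^N$ falls in the sublevel set $\Om^l$ or in its complement, and then to push the contribution of $\Om \setminus \Om^l$ to $-\infty$ using the hypothesis \eqref{expdecay}. Since $\Om = \bigcup_{l} \Om^l$ and $F^l = F \cap \Om^l$, for every $l \in \NN$ we have $\{Z^N \in F\} \subseteq \{Z^N \in F^l\} \cup \{Z^N \in \Om \setminus \Om^l\}$, whence
\[
\PP(Z^N \in F) \le \PP(Z^N \in F^l) + \PP(Z^N \in \Om \setminus \Om^l).
\]

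Next I would use the elementary fact that for any two nonnegative sequences $(a_N)$, $(b_N)$,
\[
\limsup_{N\to\infty} \tfrac1N \log(a_N + b_N) \le \max\Big\{ \limsup_{N\to\infty} \tfrac1N \log a_N, \ \limsup_{N\to\infty} \tfrac1N \log b_N\Big\},
\]
which follows at once from $a_N + b_N \le 2\max\{a_N,b_N\}$ (with the convention $\log 0 = -\infty$). Applied to the displayed bound this gives, for each fixed $l \ge l_0$,
\[
\limsup_{N\to\infty}\tfrac1N\log\PP(Z^N\in F) \le \max\Big\{\limsup_{N\to\infty}\tfrac1N\log\PP(Z^N\in F^l),\ \limsup_{N\to\infty}\tfrac1N\log\PP(Z^N\in\Om\setminus\Om^l)\Big\}.
\]
Invoking \eqref{upperln} and observing that $I(F^l) = \inf_{x\in F^l} I(x) \ge \inf_{x\in F} I(x) = I(F)$ (because $F^l \subseteq F$), the first entry in this maximum is at most $-I(F^l) \le -I(F)$.

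Finally I would fix an arbitrary finite $M$ and, using \eqref{expdecay}, choose $l \ge l_0$ large enough that $\limsup_{N\to\infty}\tfrac1N\log\PP(Z^N\in\Om\setminus\Om^l) \le -M$; the previous line then yields $\limsup_{N\to\infty}\tfrac1N\log\PP(Z^N\in F) \le \max\{-I(F),-M\}$. Letting $M\to\infty$ gives \eqref{upperlngen}. The argument is entirely soft and I do not anticipate a genuine obstacle; the only points deserving a word of care are the $\limsup$-of-log-of-a-sum estimate above and the degenerate case $I(F)=\infty$, which is handled automatically by taking $M$ arbitrarily large rather than comparing directly with $I(F)$.
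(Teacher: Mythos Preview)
Your proof is correct and follows essentially the same approach as the paper: split $\{Z^N\in F\}$ into $\{Z^N\in F^l\}$ and $\{Z^N\in\Om\setminus\Om^l\}$, apply the $\limsup$-of-max estimate, and then send $l\to\infty$. The only cosmetic difference is that the paper uses $\lim_{l\to\infty}I(F^l)=I(F)$ (since $F^l\uparrow F$) whereas you use the simpler observation $I(F^l)\ge I(F)$; both work equally well here.
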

\begin{proof}
	Since for any $A\subset \Om$, $A^l \uparrow A$ as $l \to \infty$, 
 $\lim_{l\rightarrow\infty}I(A^l)=I(A)$. 
Let $F$ be a closed set in $\Om$ satisfying \eqref{upperln} for $l \ge l_0$ for some $l_0 \in \NN$.
Then
\begin{align*}
	\limsup_{N\rightarrow\infty} \frac{1}{N}\log \mathbb{P}(Z^N\in F) &\le 
\max\left\{\limsup_{N\rightarrow\infty}\frac{1}{N}\log \mathbb{P}(Z^N\in F^l), \limsup_{N\rightarrow\infty}\frac{1}{N}\log \mathbb{P}(Z^N\in \Om\setminus \Om^l)\right\}\\
&\le \max\left\{-I(F^l), \limsup_{N\rightarrow\infty}\frac{1}{N}\log \mathbb{P}(Z^N\in \Om\setminus \Om^l)\right\}.
\end{align*}
Sending $l\to \infty$,
$\limsup_{N\rightarrow\infty} \frac{1}{N}\log \mathbb{P}(Z^N\in F) \le \max\left\{-I(F), -\infty\right\} = -I(F).$
%
The result follows.
\end{proof}

We now complete the proof of part (b) of Proposition \ref{union}. Once more, the proof is adapted from  \cite[Theorem 1.2.3]{dupell4}. Let $F$ be closed set in $\Om$.
 By Lemma \ref{suffcond}, it suffices to show that (\ref{upperl}) holds for all $l$.
Fix $l \in \NN,$  and let $\varphi(\mu) \doteq \mathbb{I}_{(F^{l})^{c}}(\mu)\cdot \infty$ so that for all $N\in \NN$, $e^{-N\varphi(\mu)}=\mathbb{I}_{F^l}(\mu)$. 
For $j\in \NN$ let $h_j(\mu) \doteq j\min\{d_*(\mu,F^l),1\}$. From Lemma  \ref{lem:topprops}(c) $h_j$ is  a continuous function on $\Om$. Clearly $0\leq h_j(\mu) \leq j$ and  $h_j(\mu)\le \varphi(\mu)$ for all $\mu \in \Om$.
Therefore, for each fixed $j,$
\begin{align*}
\limsup_{N\rightarrow\infty} \frac{1}{N} \log \mathbb{P}(Z^N\in F^l) &= \limsup_{N\rightarrow\infty}\frac{1}{N} \log \mathbb{E} \exp(-N\varphi(Z^N))\\
&\leq \limsup_{N\rightarrow\infty} \frac{1}{N}\log\mathbb{E} \exp(-Nh_j(Z^N))\\
&= -\inf_{\mu\in\Om} \{h_j(\mu)+I(\mu)\},
\end{align*}
where the last equality holds since $h_j$ is a bounded and continuous function on $\Om$.
Thus it suffices to  show that 
\begin{equation}
\label{jlim}\liminf_{j\rightarrow\infty} \inf_{\mu\in \Om} \{h_j(\mu)+I(\mu)\} \geq   I(F^l).
\end{equation}
Suppose that (\ref{jlim}) does not hold. Then there exists $M<\infty$  such that 
\begin{equation*}
\liminf_{j\rightarrow\infty} \inf_{\mu\in\Om} \{h_j(\mu) + I(\mu)\} < M < I(F^l).
\end{equation*}  
Therefore there exists an infinite subsequence of $j$ such that
$
\inf_{\mu\in\Om} \{h_j(\mu) + I(\mu)\} < M, 
$ 
and for each $j$ along this subsequence there exists $\mu_j \in \Om$ such that 
$
 h_j(\mu_j) + I(\mu_j) < M.
$
 Note that $d_*(\mu_j,F^l)\rightarrow 0$  as $j\rightarrow \infty$ along the chosen subsequence since otherwise $h_j(\mu_j)$ would diverge to  $\infty$. Therefore, there exists a sequence $\nu_j$ of points in $F^l$ such that $d_*(\mu_j,\nu_j)\rightarrow0$ along the subsequence. By assumption the set $\{x\in \Om: I(x)\le M\}$ is compact. Thus
 we can extract a further subsequence of  $\mu_j$ along which $\mu_j$ converges in $\Om$ to some $\mu^*$ satisfying $I(\mu^*)\leq M$. From Lemma \ref{lem:topprops}(b) (part (iii)) we now have that  $d_*(\mu_j,\mu^*)\rightarrow 0$. Therefore, $d_*(\nu_j,\mu^*)\rightarrow 0$. Since $F^l$ is closed in $\Om^l$, this implies that $\mu^*\in F^l$. But, $I(\mu^*) \leq M < I(F^l)$, which is a contradiction.
Thus (\ref{upperl}) holds. \hfill \qed

\section{Existence, uniqueness and continuity of solutions to (\ref{ratepde})}\label{weakuniquesec}

In this section we provide the proofs of Lemmas \ref{existence} and \ref{continu}.

\subsection{Proof of Lemma \ref{existence}}
\begin{proof}
Let 
\begin{equation*}
\psi_i^N(t) \doteq \sum_{j=1}^N u\left(\frac{jT}{N},\frac{i+1}{N}\right)\mathbb{I}_{(jT/N,(j+1)T/N]}(t),
\end{equation*}
and 
\begin{equation*}
u_N(t,\theta) \doteq \sum_{i=1}^N \psi_i^N(t) \mathbb{I}_{(i/N,(i+1)/N]}(\theta),
\end{equation*}
so that 
\begin{equation*}
u_N(t,\theta) = \sum_{i,j=1}^N u\left(\frac{jT}{N},\frac{i}{N}\right) \mathbb{I}_{(jT/N,(j+1)T/N]}(t)\mathbb{I}_{(i/N,(i+1)/N]}(\theta).
\end{equation*}
Note that the sequence $\{\psi^N_i\}$ satisfies \eqref{eq:defnsimpcont} and \eqref{controlas}. It also satisfies the first inequality in \eqref{Hzeroeq} for some $C_0 \in (0,\infty)$.
Observe that $u_N$ converges to $u$ in $L^2$ since $u$ is uniformly continuous. By  \cite[Lemma 6.2.3.g]{dupell4}, we can obtain a probability measure $\pi\in \mathcal{P}(\mathbb{R} \times S)$ whose second marginal is the uniform measure on $S$ and which satisfies the following:
\begin{itemize}
\item[(i)]
$\int_{\mathbb{R}}x\pi_1(dx|\theta) = m_0(\theta)$,
where $\pi(dxd\theta)=\pi_1(dx|\theta)d\theta$ is the atomization of $\pi$,
\item[(ii)] $R(\pi_1(\cdot | \theta) || \Phi(\cdot)) = h(m_0(\theta))$ for each $\theta \in S$.
\end{itemize}
As in (\ref{muchoice}), let
\begin{equation*}
\bar\Phi_i^N(dx) = N\int_{(i-1)/N}^{i/N}\pi_1(dx|\theta)d\theta,
\end{equation*}
for $1\leq i \leq N$ and let $\Pi^N(dx) = \bar\Phi_1^N(dx_1)\ldots\bar\Phi_N^N(dx_N).$ By the calculations leading to \eqref{mutight}, $R(\Pi^N || \Phi^N) \le NR(\pi || \pi_0)$ where from (ii) above
$R(\pi || \pi_0) = \int_S h(m_0(\theta))d\theta < \infty$.
Therefore, by Lemma \ref{mubartight}, 
 $\bar{\mu}^N$ constructed as in Section \ref{subsec:varrep} using $\Pi^N$ and $\{\psi^N_i\}$ is tight and consequently we can find a subsequence along which $\bar\mu^N$ converges to some limit $\bar\mu.$ Thus, by Theorem  \ref{HL}, 
$\bar\mu(t,d\theta)$ has a density $m(t,\theta)$, namely $\mu(t,d\theta)=m(t,\theta)d\theta$ for a.e. $t$ which solves (\ref{ratepde}) with the given $u$ and satisfies integrability conditions (\ref{hl1}), (\ref{hl2}), and (\ref{TVbound}).
\end{proof}

\subsection{Proof of Lemma \ref{continu}}
\begin{proof}
For any $\epsilon>0$ and any bounded, Lipschitz function $J$ on the unit circle, there exists a smooth function $\tilde J$ such that $\|J-\tilde{J}\|_\infty\leq \epsilon$ and $\|\tilde J\|_L \le \operatorname \|J\|_L$. 
Therefore, since $\sup_{0\leq t\leq T} \|m_i(t,\cdot)\|_1<\infty$, it suffices to restrict attention to smooth test functions, $J$, in which case, $\|J\|_L = \|J'\|_{\infty}$.
 
 If $m(t,\theta)$ is any weak solution to (\ref{ratepde}) with $\sup_{0\leq t\leq T} \|m(t,\cdot)\|_1<\infty$,
then for all smooth functions $J$ on $S,$
\begin{equation}
\label{weak}\int_S J(\theta) m(t,\theta)d\theta - \int_S J(\theta)m(0,\theta)d\theta = \int_0^t\int_S J''(\theta)h'(m(s,\theta)) d\theta ds + \int_0^t\int_S J'(\theta)u(s,\theta)d\theta ds.
\end{equation}
Observe, by setting $J(\theta)\doteq1$, that $\int_S m(t,\theta) d\theta = \int_S m(0,\theta) d\theta \doteq a$ for all $0\leq t \leq T.$ Let $\tilde{m} \doteq m-a$  and let $\tilde{M}$ be defined as

\begin{equation}
\label{dv}  \tilde{M}(t,\theta) = \int_0^{\theta} \tilde{m}(t,y) dy, \; 0 \le \theta \le 1.
\end{equation}
Integrating by parts and using (\ref{dv}),
we have that 
\begin{equation}\label{l2eq}
\int_S J'(\theta) \tilde{M}(t,\theta)d\theta - \int_S J'(\theta)\tilde{M}(0,\theta)d\theta = \int_0^t\int_S J'(\theta)[h'(m(s,\theta))]_{\theta} d\theta ds + \int_0^t\int_S J'(\theta)u(s,\theta)d\theta ds.
\end{equation}
For any smooth function $\xi$ on $S$, we can use $J(\theta) = \int_0^{\theta}\xi(y)dy - \int_S\xi(y)dy$ in the above equation and conclude that \eqref{l2eq} holds for any smooth function $\xi$ on $S$ in place of $J'$. For any $g \in L^2(S : \mathbb{R})$, we can approximate $g$ by smooth functions in $L^2(S : \mathbb{R})$ and use the fact that $\tilde{M}(t,\cdot), [h'(m(t,\cdot))]_{\theta}$ and $u(t, \cdot)$ are in $L^2(S : \mathbb{R})$ for a.e. $t$ to conclude that \eqref{l2eq} holds for any function $g \in L^2(S : \mathbb{R})$ in place of $J'$. Thus, we have for each $t \in [0,T]$,
\begin{equation*}
\tilde{M}(t,\theta) = \tilde{M}(0,\theta) + \int_0^t \left([h'(m(s,\theta))]_{\theta} + u(s,\theta)\right)ds
\end{equation*}
for a.e. $\theta$. From this equality, we conclude that $t\rightarrow \tilde{M}(t,\cdot)$ is differentiable as a map into $L^2(S : \mathbb{R})$ in the weak sense (see definition in  \cite[Section 5.9.2]{evans}) and
\begin{equation}
\label{partialv}\partial_t \tilde{M} = \frac{1}{2}[h'(m)]_\theta - u.
\end{equation}
It then follows that the map $ t \rightarrow \|\tilde{M}(t,\cdot)\|_2^2$ is absolutely continuous and
\begin{equation}\label{partform}
\partial_t\|\tilde{M}(t,\cdot)\|_2^2 = \int_S 2 \tilde{M}(t,\theta) \partial_t \tilde{M}(t,\theta)d\theta, \ \ \text{a.e. } t \in [0,T].
\end{equation}

Now let $m_1$ and $m_2$ be as in the statement of the lemma and let $m_3 \doteq m_1-m_2$. Define $\tilde{m}_i$ and $\tilde{M}_i$ in a manner analogous to above and let $\tilde M_3 = \tilde M_1- \tilde M_2$. Then
$\tilde M_3$ solves
\begin{equation*}
\partial_t \tilde{M}_3 = \frac{1}{2}[h'(m_1)]_\theta - \frac{1}{2}[h'(m_2)]_\theta - (u_1-u_2).
\end{equation*}
Using \eqref{partform}, for a.e. $t$
\begin{align*}
\partial_t \|\tilde{M}_3(t,\cdot)\|_2^2 &= \int_S 2 \tilde{M}_3(t,\theta) \partial_t \tilde{M}_3(t,\theta)d\theta \\ 
&=\int_S \tilde{M}_3(t,\theta)([h'(m_1(t,\theta))- h'(m_2(t,\theta))]_\theta -2(u_1(t,\theta)-u_2(t,\theta)))d\theta\\
&= -\int_S \partial_\theta \tilde{M}_3(t,\theta)[h'(m_1(t,\theta))- h'(m_2(t,\theta))] d\theta -2\int_S \tilde{M_3}(t,\theta)(u_1(t,\theta)-u_2(t,\theta)) d\theta\\
&=-\int_S (h'(m_1(t,\theta))- h'(m_2(t,\theta)))(m_1(t,\theta)-m_2(t,\theta)) d\theta\\
&\qquad -2\int_S \tilde{M_3}(t,\theta)(u_1(t,\theta)-u_2(t,\theta)) d\theta\\
&\leq 2\int_S \tilde{M_3}(t,\theta)(u_1(t,\theta)-u_2(t,\theta))d\theta\\
&\leq2 \|\tilde{M}_3(t,\cdot)\|_2\|u_1(t,\cdot)-u_2(t,\cdot)\|_2.
\end{align*}
where the inequality in the next to last line is from the  convexity of $h$ and the inequality in the last line is by Cauchy-Schwarz inequality.
Thus for $t\in [0,T]$
\begin{align*}
\sup_{0\le s \le t} \|\tilde{M}_3(s,\cdot)\|_2^2 &\le 2 \int_0^t \sup_{0\le r \le s }\|\tilde{M}_3(r,\cdot)\|_2\|u_1(s,\cdot)-u_2(s,\cdot)\|_2 ds\\
&\le \int_0^t \sup_{0\le r \le s }\|\tilde{M}_3(r,\cdot)\|_2^2 ds + \|u_1-u_2\|_2^2.
\end{align*}
By Gronwall's inequality,
$$\sup_{0\le s \le T} \|\tilde{M}_3(s,\cdot)\|_2 \le e^{T/2}\|u_1-u_2\|_2.$$
%
%
Therefore, for all $J\in C^{\infty}(S)$ 
\begin{align*}
\left|\int_S J(\theta) (m_1(t,\theta) -m_2(t,\theta))d\theta \right| &= \left| \int_S J'(\theta) \tilde{M}_3(t,\theta)d\theta\right|\\
&\leq \|J'\|_\infty \|\tilde{M}_3(t,\cdot)\|_2\\
&\leq e^{T/2}\|J'\|_\infty \|u_1-u_2\|_2. 
\end{align*}
This completes the proof of part (i) of the lemma.

Suppose $u_n$ is a sequence of smooth functions that converges to $u$ in $L^2([0,T] \times S)$ and let $\mu_n, \mu$ be the signed measures associated to $u_n, u$ respectively as defined in statement of the lemma. 
Then from (i)  $d_{*}(\mu_n,\mu)\to 0$ as $n\to \infty$. To complete the proof of (ii) it suffices to show that $\mu_n$ is uniformly bounded in the total variation norm.  Suppose otherwise, 
then there is a subsequence (labeled again as $n$) and $t_n \in [0,T]$ such that $||\mu_n(t_n)||_{TV}$ is unbounded. By the uniform boundedness principle, there exists a continuous function $f$ on $S$ such that 
$|\int_S f(\theta) \mu_n(t_n, d\theta)|$ is unbounded. Without loss of generality, assume $|\int_S f(\theta) d\mu_n(t_n, d\theta)| \rightarrow \infty$ as $n \rightarrow \infty$.
As in the proof of Lemma \ref{existence}, associated with the function $m_0$ on $S$, we can find $\pi \in \clp(\RR\times S)$ such that $\pi(dxd\theta)=\pi_1(dx|\theta)d\theta$, and
$R(\pi_1(dx | \theta) || \Phi(dx)) = h(m_0(\theta))$, $\int_{\mathbb{R}}x\pi_1(dx|\theta) = m_0(\theta)$, for each $\theta \in S$. Define $\bar \Phi^N_i$ by \eqref{muchoice}
and $\{\psi^{N,n}_i\}$ by \eqref{psichoice} on replacing $u$ with $u_n$. Let $\Pi^N(dx) = \bar\Phi_1^N(dx_1)\ldots\bar\Phi_N^N(dx_N)$. Using these $\{\psi^{N,n}_i\}$ and $\Pi^N$ define $\bar \mu_{n}^N$ as
$\bar \mu^N$ was defined in Section \ref{subsec:varrep}.
%
%
%
%
%
For each fixed $n$, by the same argument used in \eqref{mutight}, the paragraph following it, and the uniqueness established in part (i) of the current lemma, $\mu_{n}^N$ converges weakly to $\mu_n$ as $\Omega$-valued random variables as $N \rightarrow \infty$. In particular, for each fixed $n$, $\int_S f d\mu_{n}^N(t_n, d\theta)$ converges in probability (since the limit is non-random) to $\int_S f(\theta) \mu_n(t_n, d\theta)$ as $N \rightarrow \infty$. Choose $N_n<\infty$ such that
$$
\bar{\mathbb{P}}_{\Pi_{N_n}}\left(\left|\int_S f d\mu_{N_n,n}(t_n) - \int_S f d\mu_n(t_n)\right| > 1\right) < \frac{1}{2}.
$$
Therefore, as $|\int_S f d\mu_n(t_n)| \rightarrow \infty$, for any $M>0$ we can find $n_M$ such that for all $n \ge n_M$,
$$
\bar{\mathbb{P}}_{\Pi_{N_n}}\left(\left|\int_S f d\mu^{N_n}_n(t_n)\right| > M\right) \ge \frac{1}{2}.
$$
But by the uniform $L^2$-boundedness of $u_n$, the fact that
$$
\frac{1}{N} R(\Pi^{N}  || \Phi^N) \le  R(\pi || \pi_0) = \int_S h(m_0(\theta)) d\theta <\infty,
$$
and Lemma $\ref{mubartight}$, we have that the collection $\{\mu^{N_n}_n\}_{n \ge 1}$ is tight (as a sequence of $\Omega$-valued random variables).
Thus we have a contradiction and therefore  $\{\mu_n\}_{n \ge 1}$ is uniformly bounded in total variation norm.
\end{proof}

\section{Proof of Lemma \ref{lem:topprops}}
\label{sec:pflemtop}
To see part (a), consider the new Polish space $(\tilde S, d)$ with $\tilde S = S \cup {P}$ where $P$ is an external point with $d(x,P)=1$ for all $x \in S$ and the restriction of $d$ to $S$ is the intrinsic metric on $S$. Suppose a sequence of $\{\mu_n\} \subset \mathcal{M}^l_S$ converges to $\mu$ weakly. The we must have $\|\mu\|_{TV} \le \sup_n \|\mu_n\|_{TV} \le l$ and so $\mu \in \clm_S^l$.
Consider the ``balanced" measures
\begin{align}\label{bal}
\tilde \mu_n^+ = \mu_n^+ + \left(l - \mu_n^+(S)\right)\mathbb{I}_{\{P\}},\; \; 
\tilde \mu_n^- = \mu_n^- + \left(l - \mu_n^-(S)\right)\mathbb{I}_{\{P\}}.
\end{align}
Note that $\tilde \mu_n^{\pm}$ are finite (nonnegative) measures with  total mass $l$ for each $n$. By the compactness of $\tilde S$, the collection $\tilde \mu_n^{\pm}$ is tight and thus any subsequence of $\tilde \mu_n^{\pm}$ has a further subsequence $\tilde \mu_{n_k}^{\pm}$ that converges weakly on $\tilde S$ to respective measures $\tilde \mu^{\pm}$ with total mass $l$. As the restriction of $\tilde \mu_{n_k}^{\pm}$ to $S$ is $\mu_{n_k}^{\pm}$, $\mu = \tilde \mu^+ \vert_{S} -  \tilde \mu^- \vert_{S}$. Furthermore, as any bounded Lipschitz $f$ on $S$ with $\|f\|_{BL} \le 1$ can be extended to a bounded Lipschitz $\tilde f$ on $\tilde S$ with $\|\tilde f\|_{BL} \le 1$ by assigning $\tilde f(P)=0$, we have
\begin{align*}
\sup_{f\in BL_1(S)} \left|\int_S f d\mu_{n_k} - \int_S f d\mu\right| \le \sup_{\tilde f\in BL_1(\tilde S)} \left|\int_{\tilde S}\tilde f d \tilde \mu_{n_k}^+ - \int_{\tilde S} \tilde f d\tilde \mu^+\right| + \sup_{\tilde f\in BL_1(\tilde S)} \left|\int_{\tilde S}\tilde f d \tilde \mu_{n_k}^- - \int_{\tilde S} \tilde f d\tilde \mu^-\right| \rightarrow 0
\end{align*}
as $k \rightarrow \infty$. Thus, $\sup_{f\in BL_1(S)} |\langle f, \mu_{n} - \mu\rangle| \rightarrow 0$ as $n \rightarrow \infty$. 

Conversely, suppose $\sup_{f\in BL_1(S)} |\langle f, \mu_{n} - \mu\rangle| \rightarrow 0$ as $n \rightarrow \infty$ and $\mu_n \in  \clm_S^l$ for all $n$. Define the measures $\tilde \mu_n^{\pm}$ on $\tilde S$ as before. For any subsequence of $\tilde \mu_n^{\pm}$, obtain a further subsequence $\tilde \mu_{n_k}^{\pm}$ converging weakly to $\tilde \mu^{\pm}$. Set $\tilde \mu = \tilde \mu^+ - \tilde \mu^-$. We will also denote by $\tilde \mu$ the restriction of this measure onto $S$. As for any continuous function $f$ on $S$, its extension $\tilde f$ onto $\tilde S$ obtained by defining $\tilde f(P)=0$ remains continuous on $\tilde S$, we conclude that $\mu_{n_k}$ converge weakly to $\tilde \mu$ as measures on $S$.  Since weak convergence is equivalent to bounded Lipschitz convergence for non-negative measures of total mass $l>0$, we have
\begin{align*}
\sup_{f\in BL_1(S)} \left|\int_S f d\mu_{n_k} - \int_S f d\tilde \mu\right| \le \sup_{\tilde f\in BL_1(\tilde S)} \left|\int_{\tilde S}\tilde f d \tilde \mu_{n_k}^+ - \int_{\tilde S} \tilde f d\tilde \mu^+\right| + \sup_{\tilde f\in BL_1(\tilde S)} \left|\int_{\tilde S}\tilde f d \tilde \mu_{n_k}^- - \int_{\tilde S} \tilde f d\tilde \mu^-\right| \rightarrow 0
\end{align*}
as $k \rightarrow \infty$. As $\sup_{f\in BL_1(S)}|\langle f, \mu_{n} - \mu\rangle| \rightarrow 0$, we have $\int_S f d\mu = \int_S f d\tilde \mu$ for all bounded Lipschitz functions $f$ on $S$
and hence, $\mu = \tilde \mu$. Hence, $\mu_{n_k}$ converges weakly to $\mu$. Since the choice of subsequence is arbitrary, the whole sequence $\mu_{n}$ converges weakly to $\mu$. 
Also, $\mu \in \clm_S^l$.
This establishes equivalence of weak convergence and bounded Lipschitz convergence for measures in $\mathcal{M}^l_S$. 

We now prove (b). In order to prove (i) it suffices to show that for every $f \in C(S)$ and $\eps>0$ 
$$F \doteq \{\tilde \mu \in \Om: \sup_{0\le t \le T} |\lan \tilde \mu(t), f\ran  - \lan \mu(t), f\ran| \ge \eps\}$$
is closed in $\Om$.
Suppose for some $l>0$, $\tilde \mu_n \in F^l = F\cap \Om_l$ and $\tilde \mu_n \to \tilde \mu$ in $\Om^l$.
Then we must have from part (a) that $\sup_{0\le t \le T} |\lan \tilde \mu_n(t), f\ran  - \lan \tilde \mu(t), f\ran| \to 0$. This shows that $\tilde \mu \in F \cap \Om^l$.

For part (ii) note that by part (i) and uniform boundedness principle, for some $l \in (0,\infty)$, $\mu_n \in \Om^l$ for all
$n$. Thus $\mu \in \Om^l$ as well. 
Finally, part (iii)  now follows from noting that from the definition of the direct limit topology, for every $\eps>0$, and $l'>0$
$$G^{l'}\doteq \{\tilde \mu \in \Om: d_*(\tilde \mu, \mu) <\eps\} \cap \Om^{l'} \mbox{ is open in } \Om^{l'}$$
and since $\mu_n\to \mu$ we must have $\mu_n \in \{\tilde \mu \in \Om: d_*(\tilde \mu, \mu) <\eps\}$ for large $n$.
Therefore, part (iii) is now a consequence of part (ii).

Finally consider (c). Suppose $\mu_n \to \mu$ in $\Om$. From (b) (ii) there exists $l'> l$ such that $\mu_n, \mu \in \Om^{l'}$ and $d_*(\mu_n,\mu)\to 0$.
Also note that $F^l$ is closed in $\Om^{l'}$. Thus since $h(\mu) = d_{*}(\mu, F^{l})$ for $\mu \in \Om^{l'}$ and the right side is a continuous function on $\Om^{l'}$, we have that $h(\mu_n)\to h(\mu)$ as $n\to \infty$. \hfill \qed

\bibliographystyle{plain}

\vspace{\baselineskip}

\textsc{\noindent S. Banerjee\newline Department of Statistics and
Operations Research\newline
University of North Carolina\newline  Chapel Hill,
NC 27599, USA\newline email:
sayan@email.unc.edu
 \vspace{\baselineskip}}

\textsc{\noindent A. Budhiraja \newline Department of Statistics and
Operations Research\newline University of North Carolina\newline Chapel Hill,
NC 27599, USA\newline email: budhiraj@email.unc.edu \vspace{\baselineskip} }

\textsc{\noindent M. Perlmutter\newline Department of Computational Mathematics, Science and Engineering\newline
Michigan State University\newline East Lansing, MI 48824, USA\newline email:
perlmut6@msu.edu \vspace{\baselineskip} }
\end{document}